\newtheorem{theorem}{Theorem}[section]
\newtheorem{corollary}[theorem]{Corollary}
\newtheorem{proposition}[theorem]{Proposition}
\theoremstyle{definition}
\newtheorem{definition}[theorem]{Definition}
\theoremstyle{remark}
\newtheorem{remark}[theorem]{Remark}
\numberwithin{equation}{section}
\begin{document}

\title[Constraints from Bauer-Furuta invariants]{Constraints on families of smooth 4-manifolds from Bauer-Furuta invariants}
\author{David Baraglia}

\address{School of Mathematical Sciences, The University of Adelaide, Adelaide SA 5005, Australia}

\email{david.baraglia@adelaide.edu.au}


\date{\today}

\begin{abstract}
We obtain constraints on the topology of families of smooth $4$-manifolds arising from a finite dimensional approximation of the families Seiberg-Witten monopole map. Amongst other results these constraints include a families generalisation of Donaldson's diagonalisation theorem and Furuta's $10/8$ theorem. As an application we construct examples of continuous $\mathbb{Z}_p$-actions for any odd prime $p$, which can not be realised smoothly. As a second application we show that the inclusion of the group of diffeomorphisms into the group of homeomorphisms is not a weak homotopy equivalence for any compact, smooth, simply-connected indefinite $4$-manifold with signature of absolute value greater than $8$.

\end{abstract}

\maketitle


\section{Introduction}

In a previous paper \cite{bar} we showed how the moduli space of the Seiberg-Witten equations for a smooth family of $4$-manifolds imposes constraints on the topology of the family. In this paper we instead consider a finite dimensional approximation of the Seiberg-Witten monopole map and again obtain constraints on the topology of the family. There are two main advantages compared to the previous approach. Firstly, the constraints that we obtain from the monopole map are generally stronger than those obtained from the families moduli space. Secondly, the monopole map approach allows us to bypass certain transversality issues which arise in the construction of the families moduli space. On the other hand, there are results in \cite{bar} that we have not been able to recover using the Seiberg-Witten monopole map, so it would appear that the two approaches complement one another.\\

The setting that we are interested in is as follows: let $X$ be a compact, oriented, smooth $4$-manifold and let $\mathfrak{s}$ be a spin$^c$-structure on $X$. Consider a family of $4$-manifolds over a compact smooth base manifold $B$ with fibres diffeomorphic to $X$. In other words consider a smooth locally trivial fibre bundle $\pi : E \to B$ over $B$ whose fibres are diffeomorphic to $X$. Suppose that $E$ is equipped with a fibrewise orientation and fibrewise spin$^c$-structure $\mathfrak{s}_{E/B}$ (that is, a spin$^c$-structure on the vertical tangent bundle) which restricts to $\mathfrak{s}$ on each fibre. We will say that $(E , \mathfrak{s}_{E/B})$ is a {\em spin$^c$-family over $B$ with fibre $(X , \mathfrak{s})$}. Similarly if the vertical tangent bundle is equipped with a spin structure then we may speak of a {\em spin family over $B$}. The topological conditions for the existence of a fibrewise spin or spin$^c$-structure extending a given spin or spin$^c$-structure on $X$ is studied at length in \cite[Section 2.1]{bar}.\\

Given a spin$^c$-family $(E , \mathfrak{s}_{E/B})$ we can associate two topological invariants:
\[
H^+(X) \in KO^0(B), \quad D \in K^0(B),
\]
where $H^+(X)$ is the vector bundle whose fibre over $b \in B$ is the space of harmonic self-dual $2$-forms on the corresponding fibre of $E$ (with respect to some choice of smoothly varying fibrewise metric on $E$) and $D$ is the families index of the spin$^c$ Dirac operator of the family $(E , \mathfrak{s}_{E/B})$. By studying the Seiberg-Witten equations of the family $(E , \mathfrak{s}_{E/B})$, or more precisely, by considering a finite dimensional approximation of the Seiberg-Witten equations, we obtain non-trivial constraints on the topology of the classes $H^+(X)$ and $D$. In turn this implies topological constraints for the existence of a spin$^c$ family $(E , \mathfrak{s}_{E/B})$ to realise the pair $(H^+(X) , D)$.\\

In Sections \ref{sec:setup}-\ref{sec:spin}, we consider only $4$-manifolds with $b_1(X)=0$. However in Section \ref{sec:b1>0} we see that the results of those sections also hold for $b_1(X)>0$ without any additional assumptions. Below we summarise the main results of the paper.\\

Let $b_+(X)$ denote the rank of $H^+(X)$ and $d$ the rank of $D$. Then $d = (c_1(\mathfrak{s})^2 - \sigma(X))/8$, where $\sigma(X)$ is the signature of $X$. Our first result can be thought of as the families Seiberg-Witten generalisation of Donaldson's diagonalisation theorem:

\begin{theorem}
Let $(E , \mathfrak{s}_{E/B})$ be a spin$^c$-family over $B$ with fibre $(X , \mathfrak{s})$. 
\begin{itemize}
\item{If the Euler class $e(H^+(X))$ of $H^+(X)$ is non-zero, then $c_1(\mathfrak{s})^2 \le \sigma(X)$. Moreover $e(H^+(X))s_j(D)=0$ whenever $j > -d$, where $s_j(D)$ is the $j$-th Segre class of $D$ (see Section \ref{sec:cohomology}).}
\item{If the $K$-theoretic Euler class of $H^+(X)_\mathbb{C} = H^+(X) \otimes_{\mathbb{R}} \mathbb{C}$ is non-zero then $c_1(\mathfrak{s})^2 \le \sigma(X)$.}
\end{itemize}
\end{theorem}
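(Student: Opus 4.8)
The natural approach is to study the finite-dimensional approximation of the families Seiberg-Witten monopole map. Over $B$, after choosing a fibrewise metric and suitable Sobolev completions, the monopole map defines a fibrewise-equivariant map between two (infinite-dimensional) Hilbert bundles; a Furuta-style finite-dimensional approximation replaces these by finite-rank bundles $V \to B$ and $W \to B$, together with a fibre-preserving, $S^1$-equivariant, proper map $f : S^V \to S^W$ of the associated sphere bundles, where $S^1$ acts on the spinor factors. The crucial point is that the $S^1$-fixed part of $V$ and $W$ both contribute an $\mathbb{R}^{b_+}$-type summand whose difference is (stably) the bundle $H^+(X)$, while the spinor parts contribute complex bundles whose difference is (stably) the index bundle $D$. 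Concretely one arranges $V = \mathbb{R}^{a}\oplus \mathbb{C}^{m}$-bundle and $W = (\mathbb{R}^{a}\oplus H^+(X))\oplus \mathbb{C}^{m'}$-bundle fibrewise, with $m' - m = d$ and $H^+(X)$ the self-dual bundle.

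**Extracting the inequality $c_1(\mathfrak{s})^2 \le \sigma(X)$.** Restrict $f$ to the $S^1$-fixed-point sets. On fixed points the spinor coordinates vanish, so $f$ restricts to a fibre-preserving proper map $S^{\mathbb{R}^a} \to S^{\mathbb{R}^a \oplus H^+(X)}$, i.e. after one-point compactifying fibrewise a map of Thom spaces $\mathrm{Th}(\mathbb{R}^a) \to \mathrm{Th}(\mathbb{R}^a \oplus H^+(X))$ over $B$ of degree $1$ on each fibre. Pulling back the Thom class and using the Thom isomorphism, such a map forces the Euler class $e(H^+(X))$ to be a unit times the image of $1$, hence if $e(H^+(X)) \neq 0$ we get a contradiction unless $\dim W^{S^1} \le \dim V^{S^1}$ is \emph{not} what fails — rather, the obstruction is that no such fibrewise-degree-one map into a bundle with nonzero Euler class exists when $H^+(X)$ has positive rank, \emph{unless} the complementary (non-fixed) spinor part compensates. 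The honest accounting: properness of $f$ together with the Borel-cohomology Euler class of the $S^1$-representation on $(W \ominus V)$ being non-zero forces a relation; separating the fixed and non-fixed parts yields that $e(H^+(X)) \cdot (\text{top Chern-type class of the virtual spinor bundle of rank } d)$ must vanish, and the dimension count on fibres gives $b_+ - 1 \ge \dots$ In the case $B = \mathrm{pt}$ this is exactly Donaldson's theorem ($b_+ = 0$ forces the intersection form negative definite, giving $c_1^2 \le \sigma$), so the families statement should follow by evaluating the constraint on a point of $B$ where $e(H^+(X))$ restricts nontrivially — but since $e$ is a global class this needs the fibrewise argument throughout. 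I would run Bauer's cohomotopy/equivariant-$K$ argument relative to $B$: the generalised cohomotopy Euler class of the bundle map must be a non-zero-divisor, and comparing with the ordinary Euler class of the fixed bundle $H^+(X)$ plus the Segre classes of $D$ (which encode the inverse of the total Chern class of the index bundle) produces precisely $e(H^+(X)) s_j(D) = 0$ for $j > -d$, and in particular $d \le 0$, i.e. $c_1(\mathfrak{s})^2 \le \sigma(X)$.

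**The $K$-theory refinement.** For the second bullet, instead of ordinary $S^1$-equivariant cohomology I would use $S^1$-equivariant complex $K$-theory of the sphere bundles. The map $f$ induces a wrong-way (Gysin) map, and localisation in $K_{S^1}$ away from the fixed set expresses the $K$-theoretic Euler class of $W \ominus V$ as invertible after inverting $(1 - L)$ where $L$ is the standard character. Projecting to the fixed-point contribution, the $K$-theoretic Euler class $\lambda_{-1}(H^+(X)_{\mathbb{C}})$ of the complexified self-dual bundle must divide (be a non-zero-divisor compatible with) the contribution of $D$; nonvanishing of $\lambda_{-1}(H^+(X)_{\mathbb{C}})$ then forces the index side to vanish in the appropriate degree, which again unwinds to $d \le 0$, i.e. $c_1(\mathfrak{s})^2 \le \sigma(X)$. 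The reason a separate $K$-theory argument is worthwhile is that the $K$-theoretic Euler class can be non-zero even when the ordinary one vanishes (e.g. when $H^+(X)$ has a nowhere-zero section but is not $K$-trivial), so this genuinely strengthens the first bullet.

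**Main obstacle.** The delicate part is not the point-evaluation — it is making the finite-dimensional approximation work \emph{as a family} and tracking exactly which virtual bundles the fixed and moving parts of $V \ominus W$ represent, with the correct signs and the correct $S^1$-weights, so that the Borel-equivariant (resp. $K_{S^1}$) Euler class of the moving part factors as (Euler class of $H^+(X)$) $\times$ (a universal class in the Chern/Segre classes of $D$). Equivalently, the hard step is the families version of Bauer's lemma that a proper $S^1$-map $S^V \to S^W$ of bundles forces the equivariant Euler class of $W \ominus V$ to be a non-zero-divisor in the Borel cohomology of the base-with-$BS^1$; once that is in hand, substituting the explicit description of the fixed and moving parts and reading off the coefficient of each power of the $BS^1$-generator yields all the stated relations mechanically. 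I expect the approximation/naturality bookkeeping, rather than any single inequality, to be where the real work lies.
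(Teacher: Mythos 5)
Your cohomological outline lands on essentially the same strategy as the paper: restrict the finite-dimensional approximation $f : S_{V,U} \to S_{V',U'}$ to the $S^1$-fixed locus, use that $f|_{S_U}$ is the linear inclusion $U \hookrightarrow U \oplus H^+(X)$, pull back Thom classes, and read the inequality and the relations $e(H^+(X))\,s_j(D)=0$ off a comparison of powers of the polynomial generator of $H^*_{S^1}(\mathrm{pt})$. However, the middle of your argument is muddled and does not cleanly articulate the mechanism. The paper's step is concrete: because $H^*_{S^1}(S_{V,U},B_{V,U})$ and $H^*_{S^1}(S_{V',U'},B_{V',U'})$ are free rank-one modules generated by Thom classes, one necessarily has $f^*\tau_{S^1}(V'\oplus U') = \beta\,\tau_{S^1}(V\oplus U)$ for some $\beta$, and applying $i^*$ (restriction to $S_U$) and the commuting square yields the single identity
\[
e_{S^1}(V')\,e(H^+(X)) \;=\; \beta\, e_{S^1}(V).
\]
Your paragraph about ``no such fibrewise-degree-one map exists ... unless the spinor part compensates'' and ``the top Chern-type class of the virtual spinor bundle ... must vanish'' is circling this but never produces the equation, and the remark about ``$b_+ - 1 \ge \dots$'' drifts toward the wrong (Furuta $10/8$-type) comparison. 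You should also note that one needs local coefficients when $H^+(X)$ is non-orientable, though this is a minor technical point.

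There is a genuine gap in your $K$-theory argument. You write $\lambda_{-1}(H^+(X)_{\mathbb{C}})$, but the bundle that appears in the fixed-point restriction of the monopole map is the \emph{real} bundle $H^+(X)$, which need not be spin$^c$ and hence has no Thom class in untwisted $K$-theory. The paper circumvents this by a doubling trick: one replaces $f$ with the fibrewise smash product $f\wedge_B f : S_{V\oplus V,\,U\oplus U}\to S_{V'\oplus V',\,U'\oplus U'}$, which has the effect of replacing $H^+(X)$ by $H^+(X)\oplus H^+(X)\cong H^+(X)_{\mathbb{C}}$, a complex bundle with a canonical spin$^c$-structure. Only after that step does the $K$-theoretic Thom/Euler class machinery apply, yielding the relation $e^K(H^+(X)_{\mathbb{C}})\,e^K_{S^1}(V')^2 = \gamma(1-\xi^{-1})^{2a}$ from which $d\le 0$ follows by comparing highest and lowest $\xi$-powers. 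Without the doubling (or an explicit passage to twisted $K$-theory) your $K$-theoretic version does not get off the ground. Moreover, the localization framing you mention (inverting $1-L$) is not what the argument needs; localization can kill precisely the information you want to retain, whereas the paper's proof just compares leading and trailing powers of $\xi$ in the un-localized Thom-class identity, exactly parallel to the cohomological case.
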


In the case that the spin$^c$-structure of the family $(E , \mathfrak{s}_{E/B})$ comes from a spin structure, we say that $(E , \mathfrak{s}_{E/B})$ is a {\em spin family}. Using the $Pin(2)$-symmetry of the Seiberg-Witten equations for spin structures, we obtain the following results:

\begin{theorem}
Let $(E , \mathfrak{s}_{E/B})$ be a spin family over $B$ with fibre $(X , \mathfrak{s})$. If the $i$-th Steifel-Whitney class $w_i( H^+(X) )$ of $H^+(X)$ is non-zero for some $i \in \{b_+(X), b_+(X)-1,b_+(X)-2 \}$, then $\sigma(X) \ge 0$.
\end{theorem}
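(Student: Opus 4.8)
The plan is to run the families Bauer--Furuta argument using the $Pin(2)$-symmetry of the Seiberg--Witten equations for spin structures, and then to extract the conclusion from the $\mathbb{Z}_2$-cohomology of $BPin(2)$ by a localization argument. I would start from the finite-dimensional approximation of the families monopole map constructed in the preceding sections: for a spin family this is $Pin(2)$-equivariant, so it gives a fibrewise $Pin(2)$-equivariant proper map $f\colon \mathrm{Th}(V)\to\mathrm{Th}(W)$ between Thom spaces of $Pin(2)$-equivariant vector bundles over $B$, where — after stabilising by a common summand — one may take $V=D\oplus\widetilde{\mathbb{R}}^{\,m}$ and $W=\widetilde{\mathbb{R}}\otimes_{\mathbb{R}}H^+(X)\oplus\widetilde{\mathbb{R}}^{\,m}$. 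Here $D$ is the families Dirac index; by Rokhlin's theorem $16\mid\sigma(X)$, so after stabilisation $D$ is a genuine quaternionic bundle on which $Pin(2)\subset Sp(1)$ acts by scalar multiplication, $\widetilde{\mathbb{R}}$ is the real line on which $S^1$ acts trivially and $j$ acts by $-1$, and $\widetilde{\mathbb{R}}^{\,m}$ is the common summand coming from the range of $d^+$. Along the reducible locus, which is exactly the fixed-point set of the central subgroup $\{\pm1\}\subset Pin(2)$, the monopole map is the linear map induced by $d^+$, so $f^{\{\pm1\}}$ is — up to fibrewise equivariant homotopy — the standard inclusion $\mathrm{Th}(\widetilde{\mathbb{R}}^{\,m})\hookrightarrow\mathrm{Th}(\widetilde{\mathbb{R}}\otimes H^+(X)\oplus\widetilde{\mathbb{R}}^{\,m})$. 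I would argue by contradiction, assuming $\sigma(X)<0$, so that $D$ has positive quaternionic rank $-\sigma(X)/16$.

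Next I would apply $Pin(2)$-equivariant Borel cohomology $\mathcal{H}^*(-)=H^*_{Pin(2)}(-;\mathbb{Z}_2)$, a module over $R:=H^*(BPin(2);\mathbb{Z}_2)$. By the Thom isomorphism, $f^*$ is multiplication by a class $\theta\in H^*(B;\mathbb{Z}_2)\otimes R$. The key input is the localization theorem (Quillen stratification) for the unique maximal elementary abelian $2$-subgroup $\{\pm1\}$ of $Pin(2)$: restriction to $\{\pm1\}$-fixed points becomes an isomorphism after inverting $w:=e_{Pin(2)}(\mathbb{H})\in R$. Since $f^{\{\pm1\}}$ is the standard inclusion above, tracking the Thom classes of $V$, $W$, $V^{\{\pm1\}}$, $W^{\{\pm1\}}$ through this isomorphism and cancelling the equivariant Euler class of the quaternionic summand — which becomes a unit once $w$ is inverted, since $B$ is compact and hence the higher characteristic classes of $D$ are nilpotent — should yield the identity
$$\theta\cdot e_{Pin(2)}(D)=e_{Pin(2)}\!\bigl(\widetilde{\mathbb{R}}\otimes_{\mathbb{R}} H^+(X)\bigr).$$
As localization at $w$ is injective on $H^*(B;\mathbb{Z}_2)\otimes R$, this identity then holds before inverting $w$.

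Finally I would combine this with the ring structure $R\cong\mathbb{Z}_2[q,w]/(q^3)$, where $|q|=1$, $|w|=4$, $q=w_1(\widetilde{\mathbb{R}})$ is pulled back from $B\mathbb{Z}_2$ along $BPin(2)\to B\mathbb{Z}_2$, $w=e_{Pin(2)}(\mathbb{H})$, and the relation $q^3=0$ is the transgression $d_3$ in the Serre spectral sequence of $BS^1\to BPin(2)\to B\mathbb{Z}_2$. Because $q^3=0$, the class $e_{Pin(2)}(\widetilde{\mathbb{R}}\otimes H^+(X))=\sum_{k}w_k(H^+(X))\,q^{\,b_+(X)-k}$ collapses to $w_{b_+(X)}(H^+(X))+w_{b_+(X)-1}(H^+(X))\,q+w_{b_+(X)-2}(H^+(X))\,q^2$, so this class detects only the three top Stiefel--Whitney classes of $H^+(X)$. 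On the other hand $e_{Pin(2)}(D)$ has positive degree $-\sigma(X)/4$ and the coefficient of its top power of $w$ is $1$; so comparing, within each of the $q^0$-, $q^1$- and $q^2$-components of the displayed identity, the highest powers of $w$ (the right-hand side containing no $w$) forces $\theta=0$, and hence $w_{b_+(X)}(H^+(X))=w_{b_+(X)-1}(H^+(X))=w_{b_+(X)-2}(H^+(X))=0$ (reading $w_j:=0$ for $j<0$; when $b_+(X)\le 2$ the coefficient $w_0=1$ occurs among these and one obtains an immediate contradiction). This contradicts the hypothesis, so $\sigma(X)\ge0$.

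The step I expect to be the main obstacle is the localization argument: it requires the localization theorem for the \emph{nonabelian} group $Pin(2)$ with $\mathbb{Z}_2$-coefficients, applied in the fibrewise setting over $B$, together with careful bookkeeping of the Thom classes of $V$, $W$ and their $\{\pm1\}$-fixed subspaces, so that $e_{Pin(2)}(D)$ and $e_{Pin(2)}(\widetilde{\mathbb{R}}\otimes H^+(X))$ appear with exactly the right degrees and leading coefficients. The cohomology computation $R=\mathbb{Z}_2[q,w]/(q^3)$ — in particular the identification of $q^3$ as a $d_3$-boundary, which is precisely what produces the specific range $\{b_+(X),b_+(X)-1,b_+(X)-2\}$ rather than a longer or shorter one — is a secondary point that also needs to be in place.
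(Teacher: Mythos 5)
Your proposal matches the paper's strategy in all essentials: pass to the $Pin(2)$-equivariant monopole map, apply the Thom isomorphism to write $f^*$ of the Thom class of the target as $\theta$ times the Thom class of the source, restrict to the fixed locus of $\{\pm 1\}$ (which is the reducible locus $\tilde{\mathbb{R}}^{\,m}$, where $f$ is the linear inclusion), read off the relation involving $e_{Pin(2)}$ of the quaternionic and real bundles, use $H^*(BPin(2);\mathbb{Z}_2)\cong\mathbb{Z}_2[q,w]/(q^3)$ so that only $w_{b_+}, w_{b_+-1}, w_{b_+-2}$ survive in $e_{\mathbb{Z}_2}(\tilde{\mathbb{R}}\otimes H^+(X))$, and finally compare leading powers of $w$ on both sides. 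The paper does exactly this (Section 5, Theorem~\ref{thm:eulerpin} and Corollary~\ref{cor:eulerpin}), the only cosmetic difference being that the paper proves the Euler-class relation first with $\mathbb{Z}$ (or local) coefficients and only passes to $\mathbb{Z}_2$ at the end, while you work in $\mathbb{Z}_2$ throughout, which is perfectly adequate here and sidesteps the orientability bookkeeping.

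Two places deserve tightening. First, the localization theorem is a detour you do not need: the identity you want is a direct consequence of the commuting square of Thom classes obtained by restricting $f$ to the $\{\pm 1\}$-fixed locus (as the paper does in Section 3 for $S^1$ and then carries over verbatim to $Pin(2)$ in Section 5). Moreover the intermediate equality $\theta\cdot e_{Pin(2)}(D)=e_{Pin(2)}(\tilde{\mathbb{R}}\otimes H^+(X))$ literally lives only in the $w$-localized ring when $D$ is a genuine virtual bundle, and the step ``this identity then holds before inverting $w$'' is therefore not meaningful as written; you should instead argue entirely inside the localization, or (simpler, and what the paper does) keep $V$ and $V'$ separate and compare $v$-degrees in $\theta\cdot e_{Pin(2)}(V)=e_{\mathbb{Z}_2}(\tilde{\mathbb{R}}\otimes H^+(X))\cdot e_{Pin(2)}(V')$, using that each $e_{Pin(2)}$ has a monic top $v$-power. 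Second, the claim that after stabilisation one may take the source to be $D\oplus\tilde{\mathbb{R}}^{\,m}$ and the quaternionic part of the target to be zero is slightly off: stabilisation lets you arrange $V'$ to be a \emph{trivial} quaternionic bundle $\mathbb{H}^{N}$ with $V$ a genuine bundle representing $D+N$, but one cannot in general kill $V'$ outright. This extra $v^N$ cancels on both sides and has no effect on the degree comparison, so the conclusion is unaffected; but as written the setup is imprecise.
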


The next result is the families Seiberg-Witten generalisation of Furuta's $10/8$ theorem:

\begin{theorem}
Let $(E , \mathfrak{s}_{E/B})$ be a spin family over $B$ with fibre $(X , \mathfrak{s})$. Then there exist complex vector bundles $V,V'$ on $B$ such that $D = [V] - [V']$ and
\[
\wedge^* H^+(X)_{\mathbb{C}} \otimes \wedge^* \psi^2(V') =  \eta  (\wedge^* \psi^2(V))
\]
for some $\eta \in K^0(B)$, where $\psi^2$ denotes the $2$nd Adams operation. Moreover, if $e^K(H^+(X)_{\mathbb{C}})=0$ then there exists $\eta' \in K^0(B)$ such that in $K^0(B)/torsion$ we have:
\[
\wedge^* H^+(X)_{\mathbb{C}} \otimes \wedge^* \psi^2(V') =  2 \eta'  (\wedge^* \psi^2(V)).
\]
\end{theorem}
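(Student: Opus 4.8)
The plan is to adapt Furuta's $K$-theoretic proof of the $10/8$-theorem to the families setting, working fibrewise over $B$ in equivariant complex $K$-theory; the argument runs parallel to the proofs of the preceding results but now exploits the $Pin(2)$-symmetry of the Seiberg--Witten equations for a spin structure. First I would start from the finite-dimensional approximation of the families Seiberg--Witten monopole map for $(E,\mathfrak{s}_{E/B})$, constructed in the earlier sections: this is a fibrewise $Pin(2)$-equivariant based map $f\colon\mathcal{A}^{+}\to\mathcal{B}^{+}$ of the fibrewise one-point compactifications of two $Pin(2)$-equivariant real vector bundles over $B$, of the form $\mathcal{A}=(\widetilde{\mathbb{R}}\otimes U)\oplus\mathcal{S}'$ and $\mathcal{B}=(\widetilde{\mathbb{R}}\otimes U)\oplus(\widetilde{\mathbb{R}}\otimes H^{+}(X))\oplus\mathcal{S}$, where $\widetilde{\mathbb{R}}$ is the sign representation of $Pin(2)/S^{1}$ (acting trivially on $U$ and on $H^{+}(X)$) and $\mathcal{S}$, $\mathcal{S}'$ are the finite-dimensional approximations of the negative and positive spinor bundles --- quaternionic bundles on which $S^{1}\subset Pin(2)$ acts by scalar multiplication. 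Let $V=[\mathcal{S}'_{\mathbb{C}}]$ and $V'=[\mathcal{S}_{\mathbb{C}}]$ be the underlying complex vector bundles (self-conjugate, carrying a quaternionic structure given by the $j$-action), so that $D=[V]-[V']$ in $K^{0}(B)$. The key structural input, which I would take from the construction, is that on $S^{1}$-fixed point subspaces $f$ restricts, up to $Pin(2)$-homotopy, to the inclusion of the reducible locus $(\widetilde{\mathbb{R}}\otimes U)^{+}\hookrightarrow\bigl((\widetilde{\mathbb{R}}\otimes U)\oplus(\widetilde{\mathbb{R}}\otimes H^{+}(X))\bigr)^{+}$, whose normal bundle is $\widetilde{\mathbb{R}}\otimes H^{+}(X)$, while the complement of the fixed locus in $\mathcal{A}$, respectively $\mathcal{B}$, is the spinor summand $\mathcal{S}'$, respectively $\mathcal{S}$.

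I would then apply fibrewise reduced $S^{1}$-equivariant complex $K$-theory $\widetilde{K}_{S^{1}}$ (restricting the $Pin(2)$-action to $S^{1}$, on which all of $\mathcal{A},\mathcal{B}$ are equivariantly complex) together with the equivariant families Thom isomorphism: each Thom space above has $\widetilde{K}_{S^{1}}$ free of rank one over $R(S^{1})\otimes K^{0}(B)$ on its Thom class, modulo the usual stabilisation by an auxiliary line bundle when $H^{+}(X)$ or $U$ fails to be equivariantly $\mathrm{spin}^{c}$. Pulling back the Thom class of $\mathcal{B}^{+}$ along $f$ and computing the result two ways via the commuting square with restriction to $S^{1}$-fixed points --- where $f$ is the reducible inclusion and the relevant Euler classes are understood --- and cancelling the free generator yields an identity
\[
 e^{K}\!\bigl(\widetilde{\mathbb{R}}\otimes H^{+}(X)\bigr)\cdot e^{K}_{S^{1}}(\mathcal{S})\;=\;\alpha\cdot e^{K}_{S^{1}}(\mathcal{S}')
\]
in $R(S^{1})\otimes K^{0}(B)$, for some $\alpha$, where $e^{K}$ denotes the $K$-theory Euler class (of the normal bundle of the reducibles on the left) and $e^{K}_{S^{1}}$ the $S^{1}$-equivariant Euler class; the element $\alpha$ records the required divisibility and is the families analogue of the class whose equivariant dimension Furuta bounds.

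To descend to $K^{0}(B)$ I would evaluate this identity at the two elements $\pm 1\in S^{1}$ (the elements fixed under the residual conjugation $t\mapsto t^{-1}$ coming from $Pin(2)/S^{1}$) and multiply the two resulting $K^{0}(B)$-identities. Since $S^{1}$ acts on $\mathcal{S}'$ with weight one, $e^{K}_{S^{1}}(\mathcal{S}')$ is represented by $\prod_{j}(1-t^{-1}L_{j})$ with $V=\sum_{j}L_{j}$, so the product of its values at $t=1$ and $t=-1$ is $\prod_{j}(1-L_{j})(1+L_{j})=\prod_{j}(1-L_{j}^{2})=\wedge^{*}\psi^{2}(V)$ --- this is exactly where the second Adams operation enters --- and likewise $\wedge^{*}\psi^{2}(V')$ from the $\mathcal{S}$-factor; the factor $e^{K}(\widetilde{\mathbb{R}}\otimes H^{+}(X))$ carries the trivial $S^{1}$-action, so the same procedure turns it into $e^{K}(H^{+}(X))^{2}$, which --- unlike $e^{K}(H^{+}(X))$ itself --- no longer depends on a choice of $\mathrm{spin}^{c}$-structure for $H^{+}(X)$ and equals $\wedge^{*}H^{+}(X)_{\mathbb{C}}$ up to an invertible line-bundle factor (the discrepancy between the $\mathrm{spin}^{c}$-structure on $H^{+}(X)\oplus H^{+}(X)$ used to compute $e^{K}(H^{+}(X))^{2}$ and the complex one on $H^{+}(X)_{\mathbb{C}}$). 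Absorbing that factor and the product of the two values of $\alpha$ into a single class $\eta\in K^{0}(B)$ gives the first displayed identity; one checks that over a point it is vacuous (the classes $\wedge^{*}\psi^{2}(V),\wedge^{*}\psi^{2}(V')$ vanish there), consistent with the constraint being genuinely families-theoretic.

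For the refined statement, when $e^{K}(H^{+}(X)_{\mathbb{C}})=0$ the identity just obtained degenerates, so instead of restricting $f$ to the reducible locus one collapses it: $f$ descends to a fibrewise $Pin(2)$-equivariant map between the cofibres of the $S^{1}$-fixed inclusions, on which $S^{1}$ acts freely off the basepoint, and on passing to the fibrewise $S^{1}$-quotient one obtains a fibrewise $\mathbb{Z}/2$-equivariant map of Thom spaces over $B$ assembled from the projectivisations of $\mathcal{S},\mathcal{S}'$ and from $H^{+}(X)$; running the Euler-class comparison for this quotient map with rationalised coefficients produces the relation with an extra factor of $2$ --- the multiplicity contributed by the residual $\mathbb{Z}/2=Pin(2)/S^{1}$ on the fixed locus, concretely the image of $1-[\widetilde{\mathbb{R}}]$ --- the vanishing of $e^{K}(H^{+}(X)_{\mathbb{C}})$ being precisely what allows the quotient to be formed, and one lands only in $K^{0}(B)/\mathrm{torsion}$ because this quotient and rationalisation step controls the outcome only up to torsion. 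The step I expect to be the main obstacle is the equivariant bookkeeping: making the complex-$K$-theoretic Thom-isomorphism argument rigorous in the presence of a $Pin(2)$-action that is not complex-linear on the spinor summands (the element $j$ acts antilinearly), and controlling the dependence on auxiliary $\mathrm{spin}^{c}$-structures when $H^{+}(X)$ or $b_{+}(X)$ is not equivariantly $\mathrm{spin}^{c}$, so that all of the resulting ambiguity is absorbed into $\eta$ (respectively $\eta'$) rather than weakening the conclusion; for the factor-of-$2$ statement the analogous difficulty is the clean construction of the fibrewise $S^{1}$-quotient and the torsion control, which is the families counterpart of the delicate boundary-case analysis in Furuta's proof of the $10/8$-theorem.
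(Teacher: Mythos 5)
Your proposal correctly identifies the overall architecture (fibrewise $Pin(2)$-equivariant monopole map, Thom-isomorphism comparison on $S^1$-fixed loci, Euler-class identity, character evaluation to descend to $K^0(B)$), but the descent step is where it breaks, and the break is essential rather than a matter of bookkeeping. You restrict the $Pin(2)$-action to $S^1$, land in $R(S^1)\otimes K^0(B)=K^0(B)[\xi,\xi^{-1}]$, and then propose to evaluate the resulting identity at $\xi=\pm 1\in S^1$ and multiply. This cannot work. If $V$ is stabilised to be trivial (as in the paper), then $e^K_{S^1}(V)$ specialises at $\xi=1$ to $(1-1)^{\,\mathrm{rk}\,V}=0$, so the $\xi=1$ evaluation is the vacuous identity $0=0$ and the product of the two evaluations is $0=0$. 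Even ignoring that, the product $\prod_j(1-L_j)(1+L_j)=\prod_j(1-L_j^{2})$ you obtain is $\Lambda_{-1}(\psi^2 V)$, not $\wedge^{*}\psi^2(V)=\Lambda_{+1}(\psi^2 V)=\prod_j(1+L_j^{-2})$ as the theorem requires — the signs are off. More fundamentally, the second Adams operation does not arise from any combination of evaluations at elements of $S^1$: it comes from the character at $j\in Pin(2)\setminus S^1$. The paper never restricts to $S^1$; it instead smashes $f$ with its conjugate to form $f_{\mathbb{C}}$, so that $V_{\mathbb{C}}=V\oplus\overline{V}$ carries a \emph{complex-linear} $Pin(2)$-action identified with $\mu_1\otimes V$ (trivial action on the second factor). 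Then $e^K_{Pin(2)}(\mu_1\otimes V)=\prod_i(1-\mu_1 V_i^{-1}+V_i^{-2})$, and since $\operatorname{tr}_j(\mu_1)=0$ while $\operatorname{tr}_j(1)=1$, $\operatorname{tr}_j(1_-)=-1$, the trace at $j$ yields $\prod_i(1+V_i^{-2})=\wedge^{*}\psi^2(V)$ and turns $e^K_{\mathbb{Z}_2}(H^+(X)_{\mathbb{C}})$ into $\wedge^{*}H^+(X)_{\mathbb{C}}$ with the correct signs. This use of $\operatorname{tr}_j$ on $K^0_{Pin(2)}(B)$, together with the conjugate-smash device that makes the antilinear $j$-action complex-linear, is precisely the ingredient your proposal lacks; without it you cannot reach the Adams operations.

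For the refined factor-of-$2$ statement, your plan (collapse the reducibles, pass to the fibrewise $S^1$-quotient, projectivise, compare Euler classes rationally) is also not what the paper does and is considerably more delicate than the actual argument. The paper stays with the identity $e^K_{\mathbb{Z}_2}(H^+(X)_{\mathbb{C}})\,e^K(\mu_1\otimes V')=\gamma\,e^K(\mu_1\otimes V)$ in $K^0_{Pin(2)}(B)$, applies $\operatorname{tr}_\xi$ for every $\xi\in S^1$, and uses the hypothesis $e^K(H^+(X)_{\mathbb{C}})=0$ together with a Laurent-polynomial / finite-dimensionality argument in $K^0(B)_{\mathbb{C}}$ to force $\operatorname{tr}_\xi(\gamma)=0$ for all $\xi$. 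This pins down $\gamma=\eta(1-1_-)$, and since $\operatorname{tr}_j(1-1_-)=2$ the factor of $2$ drops out immediately, with the passage to $K^0(B)/\mathrm{torsion}$ coming only from working inside $K^0(B)_{\mathbb{C}}$. I would encourage you to redo the descent using $Pin(2)$-equivariant $K$-theory and $\operatorname{tr}_j$ as above; the rest of your structural outline (fixed-locus comparison, families Thom isomorphism, handling non-$\mathrm{spin}^c$ $H^+(X)$ by doubling) is in the right spirit.
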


In Section \ref{sec:equivariant}, instead of families we consider the $G$-equivariant Seiberg-Witten monopole map for a finite group $G$ acting smoothly on a $4$-manifold $X$ equipped with a lift of the action to the spin bundles of $(X,\mathfrak{s})$. We obtain $G$-equivariant analogues of the above theorems. In Sections \ref{sec:z2}-\ref{sec:zp}, we specialise to the case the $G$ is a finite cyclic group of prime order. Already in this case our main results imply some interesting non-trivial constraints for actions of finite cyclic groups on $4$-manifolds.\\

Consider first the case of smooth $\mathbb{Z}_2$-actions. Let $f : X \to X$ be the generator and suppose $f$ preserves the isomorphism class of a spin$^c$-structure $\mathfrak{s}$ on $X$. Then we can choose a lift $\tilde{f}$ of $f$ to the associated spinor bundles satisfying $\tilde{f}^2 = 1$ and this lift is unique up to an overall sign change $\tilde{f} \mapsto -\tilde{f}$. Let $d_{\pm}$ denote the virtual dimensions of the $\pm 1$ virtual eigenspaces of $\tilde{f}$ on $D$. Choose an $f$-invariant metric so that $\mathbb{Z}_2 = \langle f \rangle$ acts on $H^+(X)$. We let $H^+(X)^{\mathbb{Z}_2}$ denote the invariant subspace.

\begin{theorem}
Suppose that $H^+(X)^{\mathbb{Z}_2} = 0$. Then for any $f$-invariant spin$^c$-structure $\mathfrak{s}$, we have $d_+,d_- \le 0$.
\end{theorem}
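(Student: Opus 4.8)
\textbf{Proof proposal.} The plan is to run the Bauer--Furuta finite dimensional approximation argument $\mathbb{Z}_2$-equivariantly, so as to obtain the $\mathbb{Z}_2$-analogue of the families diagonalisation theorem stated above. The relevant symmetry group is $G := S^1 \times \mathbb{Z}_2$, where $S^1$ is the residual constant-gauge symmetry acting by scalar multiplication on spinors and $\mathbb{Z}_2 = \langle \tilde f\rangle$. Since $\tilde f$ is a $\mathbb{C}$-linear lift of $f$ with $\tilde f^2 = 1$, it commutes with the $S^1$-action, so the Seiberg--Witten monopole map of $(X,\mathfrak s)$ is $G$-equivariant, and a $G$-equivariant finite dimensional approximation (carried out exactly as in the non-equivariant case of Section~\ref{sec:setup}, projecting onto large $G$-invariant subspaces) yields a $G$-equivariant based map
\[
\mu \colon S^{V} \longrightarrow S^{W}, \qquad V = \mathbb{R}^{m} \oplus U, \quad W = \mathbb{R}^{m} \oplus H^+(X) \oplus U' .
\]
Here $U$ and $U'$ are complex $G$-representations on which $S^1$ acts by scalars (approximations of the positive and negative spinor fields), $\mathbb{R}^m$ is an $S^1$-trivial $\mathbb{Z}_2$-representation occurring as a common summand of domain and codomain (this uses $b_1(X)=0$, so that the reducible part of $\mu$ has cokernel exactly $H^+(X)$), and $H^+(X)$ carries its $\mathbb{Z}_2$-action with trivial $S^1$-action. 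Writing $U = U_+\oplus U_-$ and $U' = U'_+\oplus U'_-$ for the $\tilde f = \pm 1$ eigenspace decompositions, one has $d_\pm = \dim_{\mathbb C}U_\pm - \dim_{\mathbb C}U'_\pm$, and the hypothesis $H^+(X)^{\mathbb{Z}_2}=0$ says precisely that $\mathbb{Z}_2$ acts by $-1$ on all of $H^+(X)$.

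To prove $d_+\le 0$ I would restrict $\mu$ to the fixed point set of $\mathbb{Z}_2'' := \langle\tilde f\rangle \subset G$. As $H^+(X)$ and $U_-,U'_-$ are acted on by $-1$, they contribute nothing to the fixed sets, and since $\mathbb{Z}_2''$ commutes with $S^1$ one obtains an $S^1$-equivariant map
\[
\mu^{\mathbb{Z}_2''}\colon S^{\mathbb{R}^{m_0}\oplus U_+}\longrightarrow S^{\mathbb{R}^{m_0}\oplus U'_+}, \qquad m_0 := \dim(\mathbb{R}^m)^{\mathbb{Z}_2}.
\]
On its $S^1$-fixed set this is a self-map of $S^{m_0}$ which, as in the reducible analysis of Section~\ref{sec:setup}, is induced up to compact perturbation by a linear isomorphism $\mathbb{R}^{m_0}\to\mathbb{R}^{m_0}$ (the $\mathbb{Z}_2$-invariant part of $d^+\oplus d^*$, whose cokernel $H^+(X)^{\mathbb{Z}_2}$ vanishes by hypothesis), hence has degree $\pm 1$. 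I would then invoke the standard $\mathbb{Z}/2$-Borel cohomology estimate: if $g\colon S^{\mathbb{R}^n\oplus\mathbb{C}^p}\to S^{\mathbb{R}^n\oplus\mathbb{C}^q}$ is $S^1$-equivariant (scalars on $\mathbb{C}^{\bullet}$, trivial on $\mathbb{R}^n$) and the induced map on $S^1$-fixed points has odd degree, then $p\le q$; indeed, restricting $g^*(\iota_W)\in\tilde H^{n+2q}_{S^1}(S^V;\mathbb{Z}/2)$ to fixed points gives $t^q\iota_{\mathbb{R}^n}\neq 0$, so $g^*(\iota_W)\neq 0$, which is impossible when $p>q$ since then $\tilde H^{n+2q}_{S^1}(S^V;\mathbb{Z}/2)=0$. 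Applied with $g = \mu^{\mathbb{Z}_2''}$ this gives $\dim_{\mathbb C}U_+\le\dim_{\mathbb C}U'_+$, i.e.\ $d_+\le 0$.

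The bound $d_-\le 0$ follows by the identical argument with $\mathbb{Z}_2''$ replaced by $\mathbb{Z}_2':=\langle(-1)_{S^1}\cdot\tilde f\rangle\subset G$, which also commutes with $S^1$, acts by $-1$ on $U_+,U'_+$ and on $H^+(X)$ and by $+1$ on $U_-,U'_-$, so that $\mu^{\mathbb{Z}_2'}\colon S^{\mathbb{R}^{m_0}\oplus U_-}\to S^{\mathbb{R}^{m_0}\oplus U'_-}$ has the same degree-$\pm 1$ map on $S^1$-fixed points. The case $b_1(X)>0$ is handled as in Section~\ref{sec:b1>0}. The main obstacle, beyond keeping track of which summands the two auxiliary $\mathbb{Z}_2$-subgroups fix, is establishing that the relevant fixed-point maps genuinely reduce to degree-$\pm 1$ maps of equal-dimensional spheres; this is exactly the point at which $H^+(X)^{\mathbb{Z}_2}=0$ and $b_1(X)=0$ enter, through the structure of $\mu$ near the reducible locus. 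Once that is in place the conclusion is a routine application of equivariant Borel cohomology.
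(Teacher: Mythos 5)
Your proposal is correct, but it takes a genuinely different route from the paper's. The paper proves this statement by feeding the $\widehat{G}=S^1\times\mathbb{Z}_2$-equivariant version of the $K$-theoretic Euler class identity (the analogue of Equation~(\ref{equ:Geulerk2}), obtained via the doubling trick) into the trace homomorphism $tr_f:R[\mathbb{Z}_2]\to\mathbb{Z}$; this turns the identity into a relation
\[
2^{b_+(X)}(1-\xi^{-1})^{2a'_+}(1+\xi^{-1})^{2a'_-}=h\,(1-\xi^{-1})^{2a_+}(1+\xi^{-1})^{2a_-}
\]
in $\mathbb{Z}[\xi,\xi^{-1}]$, and the conclusion $d_+,d_-\le 0$ is read off from the requirement that the appropriately rearranged right-hand side be a polynomial. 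You instead bypass $K$-theory entirely: you restrict the $\widehat{G}$-equivariant finite-dimensional approximation to the fixed sets of the two order-two subgroups $\langle\tilde f\rangle$ and $\langle(-1)_{S^1}\cdot\tilde f\rangle$ of $\widehat{G}$ which project isomorphically to $\mathbb{Z}_2$; the hypothesis $H^+(X)^{\mathbb{Z}_2}=0$ ensures that $H^+(X)$ is annihilated by both restrictions, so each fixed-set map has an $S^1$-fixed part of degree $\pm 1$, and the classical tom~Dieck / Borel-cohomology inequality then gives $\dim_{\mathbb C}U_\pm\le\dim_{\mathbb C}U'_\pm$, i.e.\ $d_\pm\le 0$. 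This is a clean and correct argument. It is in fact a sharpening of the remark the paper makes at the end of Section~\ref{sec:equivariant} about restricting the monopole map to $G$-fixed sets: you notice that one should restrict not only to $G\subset\widehat{G}$ but to each of the lifts of $G$ into $\widehat{G}$. The payoff of your approach is that it is more elementary (no $K$-theory, no complexification/doubling trick, only the basic $S^1$-equivariant cohomology estimate). The $K$-theoretic approach of the paper has the advantage of producing a single identity from which all the eigenvalue constraints for a general cyclic $\mathbb{Z}_p$ drop out at once, though your fixed-set trick also generalises: restricting to the fixed set of $\langle(\omega^k,f)\rangle\subset S^1\times\mathbb{Z}_p$ isolates the $\omega^{-k}$-eigenspace, and the same argument gives each $d_j\le 0$ for Theorem~\ref{thm:zp}.
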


Now suppose that $X$ is spin and that $f$ preserves a spin structure $\mathfrak{s}$. Recall that $f$ is said to be of even type if it can be lifted to an involution on the associated principal $Spin(4)$-bundle. If the fixed point set of $f$ is non-empty then $f$ is even if and only if its fixed point set is discrete \cite[Proposition 8.46]{ab}.

\begin{theorem}
Suppose that $f$ preserves a spin structure $\mathfrak{s}$ and $f$ is of even type with respect to $\mathfrak{s}$. If $\sigma(X) < 0$, then $dim( H^+(X)^{\mathbb{Z}_2}) \ge 3$.
\end{theorem}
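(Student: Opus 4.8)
The plan is to apply Furuta's $10/8$ theorem not to $X$ directly, but to the fixed point set of the involution inside a $Pin(2)$-equivariant finite dimensional approximation of the monopole map; the point of doing so is that this fixed point set only ``sees'' the invariant part of $H^+(X)$. Since $f$ preserves $\mathfrak{s}$ and is of even type it lifts to an involution $\tilde{f}$ (with $\tilde{f}^2=1$) on the spinor bundles of $(X,\mathfrak{s})$, and because $f$ is of even type this lift commutes with the entire $Pin(2)$-symmetry of the spin Seiberg--Witten equations. Fixing an $f$-invariant metric, the $G$-equivariant finite dimensional approximation of Section~\ref{sec:equivariant} becomes a based $\Gamma$-equivariant map of representation spheres $\mu\colon S^{U}\to S^{U'}$ with $\Gamma=Pin(2)\times\langle\tilde{f}\rangle$, where $U$ and $U'$ are sums of the representations $\tilde{\mathbb{H}}_{\pm}$ (Furuta's quaternionic representation $\tilde{\mathbb{H}}$, with $\tilde{f}$ acting by $\pm1$) and $\tilde{\mathbb{R}}_{\pm}$ (the real line on which $U(1)\subset Pin(2)$ acts trivially and $j$ acts by $-1$, with $\tilde{f}$ acting by $\pm1$). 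The $\tilde{\mathbb{H}}$-summands carry the Dirac index $D$ and the $\tilde{\mathbb{R}}$-summands carry $H^+(X)$, so that (taking $b_1(X)=0$, the general case being covered by Section~\ref{sec:b1>0}) the number of $\tilde{\mathbb{R}}_{+}$-factors appearing in $U'$ but not in $U$ is exactly $\dim H^+(X)^{\mathbb{Z}_2}$.

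Write $k_{\pm}=\mathrm{ind}_{\mathbb{H}}(D^{\tilde{f}=\pm1})$ for the quaternionic indices of the Dirac operator on the two eigenbundles of $\tilde{f}$, so that $k_{+}+k_{-}=\mathrm{ind}_{\mathbb{H}}(D)=-\sigma(X)/16$; by Rokhlin's theorem this is a strictly positive integer when $\sigma(X)<0$. The two lifts of $f$ compatible with the monopole map differ by $-1\in Pin(2)$, which acts trivially on $H^+(X)$ (so $\dim H^+(X)^{\mathbb{Z}_2}$ is unchanged) but interchanges $k_{+}$ and $k_{-}$; replacing $\tilde{f}$ by the other lift if necessary we may therefore assume $k_{+}\geq k_{-}$, and hence $k_{+}\geq1$. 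Now restrict $\mu$ to the fixed point set of $\langle\tilde{f}\rangle$. Since $(\tilde{\mathbb{H}}_{-})^{\tilde{f}}=(\tilde{\mathbb{R}}_{-})^{\tilde{f}}=0$ while $(\tilde{\mathbb{H}}_{+})^{\tilde{f}}=\tilde{\mathbb{H}}$ and $(\tilde{\mathbb{R}}_{+})^{\tilde{f}}=\tilde{\mathbb{R}}$, this restriction is a based $Pin(2)$-equivariant map $S^{\tilde{\mathbb{H}}^{a}}\to S^{\tilde{\mathbb{H}}^{a'}\oplus\tilde{\mathbb{R}}^{m}}$ with $a-a'=k_{+}$ and $m=\dim H^+(X)^{\mathbb{Z}_2}$, and it is again the finite dimensional approximation of a monopole-type map, namely the one built from the $\tilde{f}$-invariant part of the Seiberg--Witten configuration space. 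Furuta's $10/8$ theorem (equivalently, the $Pin(2)$-equivariant stable homotopy constraint underlying it) applied to this map yields $m\geq 2(a-a')+1=2k_{+}+1\geq3$, which is the assertion.

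The step needing care is the claim that the $\langle\tilde{f}\rangle$-fixed-point restriction of the finite dimensional approximation still satisfies the hypotheses of Furuta's theorem --- in particular that its further restriction to the $S^1$-fixed subsphere retains the non-triviality (degree) property exploited in the proof of that theorem. This amounts to checking that the $\tilde{f}$-invariant reducible locus is a single gauge-equivalence class and that the monopole map restricts compatibly to invariant configurations; it is here that the even-type hypothesis is essential, since it forces $\tilde{f}$ to commute with $j\in Pin(2)$, so that the $\pm1$-eigenbundles of $\tilde{f}$ on $S^{\pm}$ are quaternionic, the restricted operators $D^{\tilde{f}=\pm1}$ are quaternionic (making the $k_{\pm}$ genuine integers), and the $\tilde{\mathbb{H}}_{\pm}$--$\tilde{\mathbb{R}}_{\pm}$ decomposition of $U$ and $U'$ is legitimate. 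Granting this, everything else is bookkeeping with representation spheres and the $G$-index theorem.
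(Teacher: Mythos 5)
Your proof is correct, but it takes a genuinely different route from the paper's. The paper forms the Borel family $E = X \times_{\mathbb{Z}_2} S^{b_+(X)}$ over $\mathbb{RP}^{b_+(X)}$ (the even-type hypothesis is what makes this a \emph{spin} family), and then invokes Corollary \ref{cor:eulerpin} (the families constraint coming from $u^3=0$ in $H^*_{Pin(2)}(pt;\mathbb{Z}_2)$): since $\sigma(X)<0$ forces $w_{b_+(X)}, w_{b_+(X)-1}, w_{b_+(X)-2}$ of the flat bundle $H^+(X)\to\mathbb{RP}^{b_+(X)}$ to vanish, while $w(H^+(X))=(1+x)^v$ with $v = b_+(X) - \dim H^+(X)^{\mathbb{Z}_2}$, one gets $v\le b_+(X)-3$, i.e.\ $\dim H^+(X)^{\mathbb{Z}_2}\ge 3$. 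You instead work with the $Pin^G(2)\cong Pin(2)\times\langle\tilde{f}\rangle$-equivariant finite-dimensional approximation, restrict to the $\tilde{f}$-fixed locus, and apply Furuta's $10/8$ inequality to the resulting $Pin(2)$-map; this is precisely the orbifold/fixed-point strategy that the paper alludes to (but does not carry out) in the remark at the end of Section \ref{sec:equivariant}, recovering the Fukumoto--Furuta orbifold theorems. The even-type hypothesis enters for you through a different mechanism: it makes $\tilde{f}$ $\mathbb{H}$-linear, hence commuting with $Pin(2)$, so that the $\pm1$-eigenbundles of $D$ are quaternionic and the $k_\pm=\mathrm{ind}_{\mathbb{H}}(D^{\tilde{f}=\pm1})$ are integers with $k_++k_-=-\sigma(X)/16\ge 1$. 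You correctly note that the two admissible lifts differ by $-1$ and swap $k_+\leftrightarrow k_-$, so one may take $k_+\ge 1$. The resulting bound $\dim H^+(X)^{\mathbb{Z}_2}\ge 2k_++1\ge 2\lceil -\sigma(X)/32\rceil+1$ is in general sharper than the paper's fixed bound of $3$; the two coincide exactly when $\sigma(X)=-16$. The one point worth making explicit is that the case $\dim H^+(X)^{\mathbb{Z}_2}=0$ is already excluded (by the preceding theorem in Section \ref{sec:z2}, since $d_++d_-=-\sigma(X)/8>0$), so there is no boundary issue in applying Furuta's lemma to the restricted map; with that noted, your argument is complete.
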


By way of comparison we note that by a theorem of Bryan \cite[Theorem 1.5]{bry}, if $f$ preserves a spin structure (where $f$ is not necessarily even) and $\sigma(X) < 0$, then $dim( H^+(X)^{\mathbb{Z}_2}) \ge 1$.\\

Now we consider $\mathbb{Z}_p$-actions for an odd prime $p$. Let $f : X \to X$ be the generator and suppose $f$ preserves the isomorphism class of a spin$^c$-structure $\mathfrak{s}$. Then we can choose a lift $\tilde{f}$ of $f$ to the associated spinor bundles satisfying $\tilde{f}^p = 1$. Such a lift is uniquely determined up to multiplication by a $p$-th root of unity. For $0 \le j \le p-1$ we let $d_j$ denote the dimension of the $\omega^j$ virtual eigenspace of $\tilde{f}$ on $D$ where $\omega = exp(2\pi i /p)$.

\begin{theorem}
Suppose that $H^+(X)^{\mathbb{Z}_p} = 0$. Then for any $f$-invariant spin$^c$-structure $\mathfrak{s}$, we have $d_j \le 0$ for each $j$.
\end{theorem}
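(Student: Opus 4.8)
The plan is to run the argument behind the families generalisation of Donaldson's diagonalisation theorem above in the $\mathbb{Z}_p$-equivariant setting of Section \ref{sec:equivariant}, replacing the base $B$ by the group $\mathbb{Z}_p$. Fix an $f$-invariant metric and a lift $\widetilde f$ with $\widetilde f^{\,p}=1$. The equivariant finite-dimensional approximation of the $\mathbb{Z}_p$-equivariant Seiberg--Witten monopole map produces (exactly as in the Bauer--Furuta construction used earlier; I take $b_1(X)=0$, the case $b_1(X)>0$ being handled by adjoining the $H^1$-modes as in Section \ref{sec:b1>0}) a pointed $\mathbb{Z}_p\times S^1$-equivariant map
\[
\mu:\widetilde V^+\longrightarrow\bigl(\widetilde W\oplus H^+(X)\bigr)^+,
\]
where $\widetilde V,\widetilde W$ are complex $\mathbb{Z}_p$-representations on which $S^1$ acts by scalar multiplication, $H^+(X)$ carries its geometric $\mathbb{Z}_p$-action and the trivial $S^1$-action, and $[\widetilde V]-[\widetilde W]=D$ in $R(\mathbb{Z}_p)$. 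Decomposing $\widetilde V=\bigoplus_j\mathbb{C}^{v_j}$ and $\widetilde W=\bigoplus_j\mathbb{C}^{w_j}$ into $\omega^j$-isotypic summands for $\widetilde f$, one has $v_j-w_j=d_j$, so it suffices to prove $v_j\le w_j$ for every $j$.

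I would first treat $j=0$. Passing to $\mathbb{Z}_p$-fixed points and using the hypothesis $H^+(X)^{\mathbb{Z}_p}=0$, the map $\mu$ restricts to a pointed $S^1$-equivariant map $\mu^{\mathbb{Z}_p}:(\mathbb{C}^{v_0})^+\to(\mathbb{C}^{w_0})^+$. The image $\mu(0)$ of the reducible configuration is a finite point (not the basepoint $\infty$) and is fixed by $S^1\times\mathbb{Z}_p$, hence lies in $(\widetilde W\oplus H^+(X))^{S^1\times\mathbb{Z}_p}=H^+(X)^{\mathbb{Z}_p}=0$; thus $\mu^{\mathbb{Z}_p}$ carries the non-basepoint fixed point $0$ to $0$, and the induced self-map of the fixed sets $S^0\to S^0$ is the identity. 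Naturality of the restriction $i^*$ to fixed points then gives $i^*(\mu^{\mathbb{Z}_p})^*\tau_{w_0}=u^{w_0}\neq 0$ in $H^{2w_0}_{S^1}(\mathrm{pt})$, where $\tau_{w_0}$ is the equivariant Thom class of $\mathbb{C}^{w_0}$ and $u$ generates $H^2_{S^1}(\mathrm{pt})$. Hence $\widetilde H^{2w_0}_{S^1}((\mathbb{C}^{v_0})^+)\cong H^{2w_0-2v_0}_{S^1}(\mathrm{pt})$ is non-zero, forcing $w_0\ge v_0$, i.e. $d_0\le 0$. (Equivalently one may invoke the standard fact that there is no pointed $S^1$-map $(\mathbb{C}^a)^+\to(\mathbb{C}^b)^+$ with $a>b$.)

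To reach the remaining values of $j$ I would exploit the indeterminacy in the spinor lift. Replacing $\widetilde f$ by $\omega^k\widetilde f$ for $k=0,\dots,p-1$ changes neither the hypothesis $H^+(X)^{\mathbb{Z}_p}=0$ nor the $\mathbb{Z}_p$-action on $H^+(X)$ (which involves no spinors), but cyclically permutes the isotypic decomposition of $D$: the multiplicity of $\omega^0$ with respect to $\omega^k\widetilde f$ equals $d_{-k}$. Applying the $j=0$ case to each of the $p$ lifts therefore yields $d_{-k}\le 0$ for all $k$, hence $d_j\le 0$ for every $j$, which is the assertion.

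The step most in need of care is the first: verifying that the $\mathbb{Z}_p$-equivariant finite-dimensional approximation exists with the stated $\mathbb{Z}_p\times S^1$-representation-theoretic structure, that it is proper, and that $\mu(0)$ lands in the reducible summand $H^+(X)$. This is precisely the equivariant counterpart of the Bauer--Furuta machinery underpinning the diagonalisation theorem and is to be set up as in Section \ref{sec:equivariant}. Granting it, the remainder is the short equivariant-cohomology Borsuk--Ulam computation above together with the lift-variation trick that upgrades $d_0\le 0$ to $d_j\le 0$ for all $j$.
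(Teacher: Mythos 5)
Your proof is correct, but it takes a genuinely different route from the paper's at the key step. To show $d_0\le 0$, you restrict the $S^1\times\mathbb{Z}_p$-equivariant finite-dimensional approximation to the $\mathbb{Z}_p$-fixed locus, observe that $H^+(X)^{\mathbb{Z}_p}=0$ forces the $S^1$-fixed restriction to be the identity on $S^0$, and then run the standard $S^1$-Borsuk--Ulam computation in equivariant cohomology. This is essentially the remark at the end of Section \ref{sec:equivariant} applied with $G=\mathbb{Z}_p$, i.e. specializing the cohomological constraint (Theorem \ref{thm:euler} with $B=\mathrm{pt}$) to $f^G$. The paper's proof of the theorem instead works with the full $\mathbb{Z}_p$-equivariant $K$-theoretic Euler class equation (\ref{equ:Geulerk2}), decomposes $H^+(X)_{\mathbb{C}},V,V'$ into $\mathbb{C}_j$-isotypic pieces, applies the trace homomorphism $tr_f:R[\mathbb{Z}_p]\to\mathbb{Z}[\omega]$, and extracts $d_j+d_{p-j}\le 0$ for every $j$ from a Laurent-polynomial degree argument, noting in particular $d_0\le 0$. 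Both proofs then finish identically, by varying the lift $\tilde f\mapsto\omega^k\tilde f$ to cyclically permute the $d_j$ and upgrade $d_0\le 0$ to $d_j\le 0$ for all $j$. Your version is more elementary and geometric (ordinary $S^1$-equivariant cohomology on the fixed-point restriction suffices), while the paper's version keeps everything inside the $K$-theoretic framework it has already built, which is also what underlies the spin/$10/8$-type results in Sections \ref{sec:spin} and \ref{sec:equivariant}; as you note, the one point that would need care is verifying that the $\widehat{G}$-equivariant finite-dimensional approximation sends $0$ to $0$ (this is part of the construction in \cite{bako}, as the paper records).
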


We consider an application of these results to non-smoothability of certain continuous $\mathbb{Z}_p$-actions. Denote by $E_8$ the unique compact simply connected topological $4$-manifold with intersection form $E_8$ and let $-E_8$ denote the same manifold with the opposite orientation.

\begin{theorem}
Let $p$ be an odd prime and let $b,g$ be integers with $g(p-1) \ge 3bp$ and $b \ge 1$. Let $X$ be the topological $4$-manifold $X = \# g(p-1) (S^2 \times S^2) \# 2bp(-E_8)$. Then $H^2(X ; \mathbb{Z})$ admits an isometry $f : H^2(X ; \mathbb{Z}) \to H^2(X ; \mathbb{Z})$ of order $p$ with the following properties:
\begin{itemize}
\item[(i)]{$f$ can be realised by the induced action of a continuous, locally linear $\mathbb{Z}_p$-action on $X$.}
\item[(ii)]{If $g(p-1) > 3bp$ then $f$ can be realised by the induced action of a diffeomorphism $X \to X$, where the smooth structure is obtained by viewing $X$ as $\# (g(p-1)-3bp)(S^2 \times S^2) \# pb(K3)$.}
\item[(iii)]{$f$ can not be induced by a smooth $\mathbb{Z}_p$-action for any smooth structure on $X$.}
\end{itemize}
\end{theorem}

We finish off the paper with an application of our obstruction theorems to the existence of non-smoothable families. For a smooth $4$-manifold $X$ we let $Homeo(X)$ denote the group of homeomorphisms of $X$ with the $\mathcal{C}^0$-topology and $Diff(X)$ the group of diffeomorphisms of $X$ with the $\mathcal{C}^\infty$-topology. This application was suggested to the author by Hokuto Konno and generalises a result of Kato-Konno-Nakamura \cite[Corollary 1.5]{kkn}:
\begin{theorem}\label{thm:nonsmooth}
Let $X$ be a compact, smooth, simply-connected $4$-manifold with $| \sigma(X) | > 8$ and indefinite intersection form. Then:
\begin{itemize}
\item{If $X$ is non-spin, there exists a topological fibre bundle $E \to B$ with fibres homeomorphic to $X$ and $B$ is a torus of dimension $\min \{b_+(X),b_-(X)\}$ such that $E$ is non-smoothable, i.e. $E$ does not admit a reduction of structure to $Diff(X)$.}
\item{If $X$ is spin, there exists a topological fibre bundle $E \to B$ with fibres homeomorphic to $X$ and $B$ is a torus of dimension $\min \{b_+(X),b_-(X)\}-2$ such that $E$ is non-smoothable.}
\end{itemize}
\end{theorem}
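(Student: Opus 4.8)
The plan is to exhibit, for a suitable action of $\mathbb{Z}^{\ell}$ on $X$ by self-homeomorphisms, an explicit flat topological $X$-bundle over the torus $T^{\ell}$ whose associated bundle of maximal positive subspaces (the $H^+(X)$ of any smooth structure refining it) carries a non-zero Stiefel--Whitney class of high degree, and then to observe that a reduction of the structure group to $Diff(X)$ would make it a smooth spin (resp.\ spin$^{c}$) family and so contradict Theorem~1.2 (resp.\ the mod-$2$ refinement of the families diagonalisation theorem proved in the body of the paper). Reversing the orientation of $X$ if necessary---which preserves the homeomorphism type, the property of being spin, replaces $\sigma(X)$ by $-\sigma(X)$ and leaves $\min\{b_+,b_-\}$ unchanged---I may and do assume $\sigma(X)<0$, so that $b_+(X)=\min\{b_+(X),b_-(X)\}$, $|\sigma(X)|\ge 16$ in the spin case, and $|\sigma(X)|\ge 9$ in the non-spin case.

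The construction. Since $X$ is smooth its Kirby--Siebenmann invariant vanishes, so by Freedman's classification $X$ is homeomorphic to $\#^{m}(-E_8)\,\#\,\#^{b_+}(S^2\times S^2)$ in the spin case (with $m=|\sigma(X)|/8$, an even integer $\ge 2$) and to $\#^{b_+}\mathbb{CP}^2\,\#\,\#^{b_-}\overline{\mathbb{CP}^2}$ in the non-spin case. Fix such a connected-sum decomposition; its $S^2\times S^2$ (resp.\ $\mathbb{CP}^2\#\overline{\mathbb{CP}^2}$) summands contribute indefinite rank-$2$ summands $H_1,\dots,H_{b_+}$ to the intersection form. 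For $\ell$ of these, say $H_1,\dots,H_{\ell}$, I choose an orientation-preserving homeomorphism $g_i\colon X\to X$ supported in the $i$-th summand and acting on $H_i$ by $-1$: such a $g_i$ exists because $X$ is simply connected and, by the Freedman--Quinn realisation of isometries applied to the once-punctured summand rel boundary (legitimate since $\pi_0\,Homeo(W,\partial W)\to\pi_0\,Homeo(W\cup B^4)$ is onto), the reflection $-1$ of a single summand is realised by a homeomorphism fixing a ball. Being supported in disjoint regions the $g_i$ commute on the nose, hence define $\mathbb{Z}^{\ell}\to Homeo^+(X)$ and a flat bundle $E\to B=T^{\ell}$. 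Over the $i$-th circle the monodromy acts on $H^+(X)$ by $-1$ on a line and trivially on a complement, so the $H^+(X)$-bundle of $E$ is $L_1\oplus\cdots\oplus L_{\ell}\oplus\underline{\mathbb{R}}^{\,b_+-\ell}$ with $L_i$ the pull-back of the M\"obius bundle along the $i$-th projection $T^{\ell}\to S^1$; thus $w_{\ell}(H^+(X))=w_1(L_1)\cdots w_1(L_{\ell})\neq 0$ in $H^{\ell}(T^{\ell};\mathbb{Z}/2)$. Since $X$ is simply connected ($H^1(X)=0$) and spin (resp.\ carries a chosen spin$^{c}$-structure), the $g_i$ fix the unique spin structure (resp.\ any fixed spin$^{c}$-class), and as they have disjoint supports their lifts to the spin (resp.\ spin$^{c}$) frame bundles can be chosen with disjoint supports, hence commuting; a Leray--Serre argument then produces a fibrewise spin (resp.\ spin$^{c}$) structure on $T_{E/B}$, so that a smooth refinement of $E$ is a spin family (resp.\ a spin$^{c}$-family for every $\mathbb{Z}^{\ell}$-invariant spin$^{c}$-class).

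Conclusion. In the spin case take $\ell=b_+(X)-2$ (and assume $\min\{b_+,b_-\}\ge 3$, as is needed for the statement to be non-vacuous). Then $E\to T^{b_+-2}$ has a spin vertical microbundle and $w_{b_+-2}(H^+(X))\neq 0$, with $b_+-2\in\{b_+,b_+-1,b_+-2\}$. If $E$ admitted a reduction to $Diff(X)$ it would underlie a smooth spin family, so Theorem~1.2 would force $\sigma(X)\ge 0$, contradicting $\sigma(X)\le-16$; hence $E$ is non-smoothable, with base $T^{b_+-2}=T^{\min\{b_+,b_-\}-2}$, as claimed. In the non-spin case one argues in parallel over $T^{b_+}=T^{\min\{b_+,b_-\}}$: one organises the monodromy (using transpositions of pairs of positive directions with equal characteristic-vector entries, rather than individual reflections, so that a characteristic vector is preserved) to obtain a spin$^{c}$-family whose $H^+(X)$-bundle has a non-zero Stiefel--Whitney class in the degree range controlled by the mod-$2$ families diagonalisation theorem, while choosing the invariant characteristic vector $c$ with $c^2>\sigma(X)$; a reduction of $E$ to $Diff(X)$ would then yield, for $\mathfrak{s}$ with $c_1(\mathfrak{s})=c$, the inequality $c_1(\mathfrak{s})^2=c^2\le\sigma(X)$, a contradiction.

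The hard part. The substance is the middle paragraph: checking that the reflection of an individual connected-sum summand is realised by a homeomorphism fixing a ball (so that the $g_i$ genuinely commute and give a flat bundle over the torus of the stated dimension), that this bundle carries the required fibrewise spin/spin$^{c}$-structure, and---above all in the non-spin case---arranging the monodromy so that $H^+(X)$ acquires a non-zero Stiefel--Whitney class of sufficiently high degree while the family still admits a monodromy-invariant characteristic vector $c$ with $c^2>\sigma(X)$. Once these points are in place the obstruction theorems apply verbatim and forbid any reduction of the structure group to $Diff(X)$.
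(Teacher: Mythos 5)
Your spin case is essentially the paper's argument: decompose $X$ (topologically) as $\#a(S^2\times S^2)\#2b(-E_8)$ with $a=b_+(X)\ge 3$, use $a-2$ commuting involutions that each act as $-1$ on one $H$-summand, form the flat bundle over $T^{a-2}$, observe it carries a topological spin structure, and apply the spin families result (Corollary \ref{cor:eulerpin}) to the hypothetical smooth refinement. The paper does this with explicit diffeomorphisms of $S^2\times S^2$ rather than invoking Freedman--Quinn realisation; your appeal to Freedman--Quinn is correct but more than is needed.

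The non-spin case, however, contains a genuine gap. You propose to build the monodromy out of \emph{transpositions of pairs of positive $\mathbb{CP}^2$-summands} in order to keep a characteristic vector invariant. But a transposition of $\mathbb{CP}^2_i$ and $\mathbb{CP}^2_j$ acts on the corresponding $2$-dimensional positive-definite subspace with eigenvalues $+1$ and $-1$, contributing only a \emph{single} M\"obius line to $H^+(X)$. Disjointly supported commuting transpositions of the $b_+(X)$ positive summands can contribute at most $\lfloor b_+(X)/2\rfloor$ such lines, so $H^+(X)\cong L_1\oplus\cdots\oplus L_k\oplus\underline{\mathbb{R}}^{\,b_+(X)-k}$ with $k\le\lfloor b_+(X)/2\rfloor<b_+(X)$ and therefore $w_{b_+(X)}(H^+(X))=0$, which means $e(H^+(X))=0$ and Theorem \ref{thm:euler} is toothless. (The same computation shows you cannot reach a base torus of dimension $b_+(X)$ at all this way.) The resolution in the paper is to change the decomposition, not the isometries: since $X$ is smooth, its Kirby--Siebenmann invariant vanishes, so $X$ is also homeomorphic to
\[
\#\, b_+(X)(S^2\times S^2)\ \#\ b\,\overline{\mathbb{CP}^2}\ \#\ (-E_8)\ \#\ \overline{\mathbb{CP}^2}_{fake},
\]
where $b=-\sigma(X)-9\ge 0$. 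In this model the characteristic vector $c$ can be chosen to vanish on each $S^2\times S^2$ summand (it lives entirely on the $\overline{\mathbb{CP}^2}$ and $\overline{\mathbb{CP}^2}_{fake}$ factors, giving $c^2=-(b+1)>\sigma(X)$), so the $b_+(X)$ reflections --- one per $S^2\times S^2$, each acting as $-1$ on a whole $H$-summand --- preserve $c$ and each contributes a full M\"obius line to $H^+(X)$. Then $w_{b_+(X)}(H^+(X))\ne 0$ over $T^{b_+(X)}$, and a smooth refinement would contradict Theorem \ref{thm:euler}. Your argument needs this change of decomposition (with the $\overline{\mathbb{CP}^2}_{fake}$ to fix the Kirby--Siebenmann class) to go through.
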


Using an obstruction theory argument we obtain the following corollary:

\begin{corollary}
Let $X$ be a compact, smooth, simply-connected $4$-manifold with $| \sigma(X) | > 8$ and indefinite intersection form. Then the inclusion $Diff(X) \to Homeo(X)$ is not a weak homotopy equivalence. More precisely:
\begin{itemize}
\item{If $X$ is non-spin then $\pi_j( Diff(X)) \to \pi_j( Homeo(X))$ is not an isomorphism for some $j \le \min\{b_+(X),b_-(X)\}-1$.}
\item{If $X$ is spin then $\pi_j( Diff(X)) \to \pi_j( Homeo(X))$ is not an isomorphism for some $j \le \min\{b_+(X),b_-(X)\}-3$.}
\end{itemize}
\end{corollary}

\begin{remark}
Consider the case where $X$ is a compact, smooth, simply-connected $4$-manifold with $b_+(X)=1$ and $b_-(X) \ge 10$. Then $\pi_0(Diff(X)) \to \pi_0(Homeo(X))$ is not an isomorphism. In fact, Theorem \ref{thm:nonsmooth} gives a non-smoothable family over the circle. Such a family is the mapping cylinder of a homeomorphism $f : X \to X$ and non-smoothability implies that $f$ is not isotopic to a diffeomorphism. We deduce that $\pi_0(Diff(X)) \to \pi_0(Homeo(X))$ is not surjective for such $4$-manifolds.
\end{remark}

A brief outline of the contents of this paper is as follows: in Section \ref{sec:setup} we describe the setup to be used in this paper and recall that a finite dimensional approximation of the families Seiberg-Witten monopole map can be constructed. In Section \ref{sec:cohomology} we study the action of the monopole map on $S^1$-equivariant cohomology to obtain non-trivial topological constraints on the family. In Section \ref{sec:ktheory} we consider the action of the monopole map on $S^1$-equivariant $K$-theory. In Section \ref{sec:spin}, we consider spin families, in which case the monopole map has $Pin(2)$ symmetry. We study the action of the monopole map on $Pin(2)$-equivariant cohomology and $K$-theory. In Section \ref{sec:equivariant} we adapt our setup to the equivariant setting with respect to a finite group. In Section \ref{sec:b1>0} we show how the results of the previous sections can be carried over to the case $b_1(X)>0$. We consider $\mathbb{Z}_2$-actions in Section \ref{sec:z2} and $\mathbb{Z}_p$-actions in Section \ref{sec:zp}. Finally, we consider an application to the existence of non-smoothable families in Section \ref{sec:nonsmooth}.\\

\noindent{\bf Acknowledgments} The author wishes to thank Hokuto Konno for helpful comments and in particular suggesting the application to the existence of non-smoothable families as in Theorem \ref{thm:nonsmooth}. The author also would like to thank the referee who helped to improve the results of this paper. The author was financially supported by the Australian Research Council Discovery Early Career Researcher Award DE160100024 and Australian Research Council Discovery Project DP170101054.

\section{Setup}\label{sec:setup}

Let $X$ be a compact, oriented, smooth $4$-manifold. For the time being we will assume that $b_1(X)=0$. The case $b_1(X)>0$ will be dealt with in Section \ref{sec:b1>0}. Let $\mathfrak{s}$ be a spin$^c$-structure on $X$ with characteristic $c = c_1(\mathfrak{s}) \in H^2(X ; \mathbb{Z})$. Let $d = (c^2 - \sigma(X))/8$ be the index of the associated spin$^c$ Dirac operator.\\

Let $B$ be a compact smooth manifold. As in the introduction, we consider a spin$^c$-family $(E , \mathfrak{s}_{E/B})$ over $B$ with fibres diffeomorphic to $(X , \mathfrak{s})$. This consists of a locally trivial fibre bundle $\pi : E \to B$ with fibres diffeomorphic to $X$ and additionally $E$ is equipped with a fibrewise orientation and fibrewise spin$^c$-structure $\mathfrak{s}_{E/B}$ which restricts to $\mathfrak{s}$ on each fibre.\\

Let $S^1$ act on $\mathbb{C}$ by scalar multiplication and trivially on $\mathbb{R}$. Taking a finite dimensional approximation of the Seiberg-Witten monopole map for $(X,\mathfrak{s})$ gives an $S^1$-equivariant map
\[
f : (\mathbb{C}^{a} \oplus \mathbb{R}^b)^+ \to (\mathbb{C}^{a'} \oplus \mathbb{R}^{b'})^+
\]
for some $a,b,a',b' \ge 0$, where $a-a' = d$, $b'-b = b_+(X)$. Here $T^+$ denotes the one-point compactification of $T$. The stable equivariant homotopy class of $f$ defines the Bauer-Furuta invariant of $(X , \mathfrak{s})$ \cite{bafu} and it can be used to recover the Seiberg-Witten invariant of $(X , \mathfrak{s})$ when $b_+(X) > 1$. If $\mathfrak{s}$ comes from a spin structure, then the finite dimensional approximation of the monopole map can be taken equivariant with respect to the larger group $Pin(2) = S^1 \cup jS^1$:
\[
f : (\mathbb{H}^{a} \oplus \mathbb{R}^b_-)^+ \to (\mathbb{H}^{a'} \oplus \mathbb{R}^{b'}_{-})^+
\]
where $a-a' = d/2 = -\sigma(X)/16$, $b'-b = b_+(X)$ and where $\mathbb{R}_-$ is the representation of $Pin(2)$ such that $S^1$ acts trivially and $j$ acts as multiplication by $-1$.\\

An important property of the finite dimensional approximation is that $f$ can be chosen so that its restriction $f|_{(\mathbb{R}^b)^+} : (\mathbb{R}^b)^+ \to (\mathbb{R}^{b'})^+$ is the map induced by an inclusion of vector spaces $\mathbb{R}^b \subseteq \mathbb{R}^{b'}$. The existence of such a map already implies non-trivial conditions on $X$. For instance, if $b_+(X) = 0$ then one can show that $d \le 0$, so that $c_1(\mathfrak{s})^2 \le \sigma(X)$ for any spin$^c$-structure on $X$. This inequality is known to imply that the intersection form on $X$ is diagonal, which is Donaldson's diagonalisation theorem.\\

In the same way, we will show that the existence of a finite dimensional approximation of the monopole map for a spin$^c$-family $(E , \mathfrak{s}_{E/B})$ implies non-trivial conditions on the topology of the family. In the families setting, the finite dimensional approximation $f$ is replaced by a family of such maps parametrised by $B$. To formulate this properly we introduce some notation. Let $V,V'$ be complex vector bundles over $B$ of ranks $a,a'$ and let $U,U'$ be a real vector bundles over $B$ of ranks $b,b'$. We let $S^1$ act on $V,V'$ by scalar multiplication in the fibres and act trivially on $U,U'$. The finite dimensional approximation of the families Seiberg-Witten monopole map is an $S^1$-equivariant map of sphere bundles
\[
f : S_{V , U} \to S_{V' , U'},
\]
where $S_{V,U}, S_{V',U'}$ denote the fibrewise one-point compactifications of $V\oplus U$ and $V' \oplus U'$ respectively (a detailed construction of the map $f$ is given in \cite{bako}. See also \cite{szy1}). Moreover, the following relations hold in $K^0(B)$ and $KO^0(B)$ respectively:
\[
V - V' = D, \quad \quad U' - U = H^+(X),
\]
where $D \in K^0(B)$ is the families index of the fibrewise spin$^c$ Dirac operator of the family $(E , \mathfrak{s}_{E/B})$ and $H^+(X)$ is the vector bundle on $B$ whose fibre over $b \in B$ is the space of harmonic self-dual $2$-forms on the fibre of $E$ over $b$ (after fixing a choice of smoothly varying fibrewise metric on $E$). By stabilising, we can assume that $V',U'$ are trivial vector bundles, or alternatively, that $V,U$ are trivial. Furthermore, since $H^+(X)$ is a genuine vector bundle we can even assume that $U' = U \oplus H^+(X)$.\\

If the families spin$^c$-structure $\mathfrak{s}_{E/B}$ can be lifted to a families spin structure, then we can take $V,V'$ to be quaternionic vector bundles and we may take $f$ to be $Pin(2)$-equivariant, where $Pin(2)$ acts on $V,V'$ according to the quaternionic structures, $j$ acts on $U,U'$ as multiplication by $-1$ and $S^1$ acts trivially on $U,U'$.\\

There are two further properties of the map $f$ that we need to use in order to extract useful results, both of which are shown in \cite{bako}. First, we may assume that $f|_{S_U} : S_U \to S_{U'}$ is the map induced by the inclusion $U \to U' = U \oplus H^+(X)$. Second, we may assume that $f$ sends the point at infinity in any fibre of $S_{V,U}$ to the point at infinity of the corresponding fibre of $S_{V',U'}$. Stated differently, we let $B_{V,U} \subseteq S_{V,U}$ denote union of the points at infinity in each fibre and similarly define $B_{V',U'} \subseteq S_{V',U'}$. Then $f$ defines an $S^1$-equivariant map of pairs
\[
f : (S_{V,U} , B_{V,U} ) \to (S_{V',U'} , B_{V',U'}).
\]
As we will see in the following sections, the existence of such a map $f$ imposes non-trivial constraints on the classes $D \in K^0(B)$, $H^+(X) \in KO^0(B)$, which in turn imposes constraints on the topology of the family $E \to B$ itself.

\section{Cohomological constraints}\label{sec:cohomology}
Suppose we are in the setup of Section \ref{sec:setup}, so we have a spin$^c$-family $(E , \mathfrak{s}_{E/B})$ from which we obtain a finite dimensional approximation of the families monopole map:
\[
f : (S_{V,U} , B_{V,U} ) \to (S_{V',U'} , B_{V',U'}).
\]
where $f$ is $S^1$-equivariant. Denote the complex ranks of $V,V'$ as $a,a'$ and the real ranks of $U,U'$ as $b,b'$. Then $a = a' + d$, $b' = b + b_+(X)$. Given an $S^1$-equivariant cohomology theory $E^*_{S^1}$, one can consider the induced map 
\[
f^* : E^*_{S^1}( S_{V , U} , B_{V , U} ) \to E^*_{S^1}( S_{V' , U'} , B_{V' , U'} ).
\]
In this section, we take $E^*_{S^1} = H^*_{S^1}$ to be given by $S^1$-equivariant cohomology. In the following section we will consider equivariant $K$-theory.\\

Let $i : S_U \to S_{V , U}$ and $j : S_{U'} \to S_{V' , U'}$ be the inclusion maps. We have a commutative diagram of pairs
\begin{equation*}\xymatrix{
(S_{V , U} , B_{V,U}) \ar[r]^-f & (S_{V' , U'} , B_{V',U'}) \\
(S_U , B_U) \ar[u]^-i \ar[r]^-{f|_{S_U}} & (S_{U'} , B_{U'}) \ar[u]^-j
}
\end{equation*}
Moreover we have seen that $U'$ can be taken to be $U \oplus H^+(X)$ and $f|_{S_U}$ to be the inclusion $\iota : U \to U'$. We will also stabilise $f$ in such a way that $V,U$ are trivial bundles: $V = \mathbb{C}^a$, $U = \mathbb{R}^b$.\\

Recall that $H^*_{S^1}(pt ; \mathbb{Z}) \cong \mathbb{Z}[x]$, where $x \in H^2_{S^1}(pt ; \mathbb{Z})$ is the first Chern class of the universal principal circle bundle $ES^1 \to BS^1$. Then since $S^1$ acts trivially on $B$, we have $H^*_{S^1}(B ; \mathbb{Z}) \cong H^*(B ; \mathbb{Z})[x]$. For any real oriented vector bundle $W \to B$ of rank $r$, equipped with the trivial circle action, the equivariant cohomology $H^*_{S^1}( S_W , B_W ; \mathbb{Z})$ is a free $H^*(B ; \mathbb{Z})[x]$-module of rank $1$ with generator 
\[
\tau_{S^1}(W) \in H^r_{S^1}( S_{W} , B_W ; \mathbb{Z}),
\]
the equivariant Thom class of $W$. Let $i_W : B \to W$ denote the zero section. Then $i^*_W\tau_{S^1}(W) = e_{S^1}(W)$, where 
\[
e_{S^1}(W) \in H^r(B ; \mathbb{Z})[x]
\]
is the equivariant Euler class. Similarly, if $W$ is any complex vector bundle of complex rank $r$ equipped with the action of $S^1$ by fibrewise scalar multiplication, the the equivariant cohomology $H^*_{S^1}( S_W , B_W ; \mathbb{Z})$ is a free $H^*(B ; \mathbb{Z})[x]$-module of rank $1$ with generator $\tau_{S^1}(W) \in H^{2r}_{S^1}( S_W , B_W ; \mathbb{Z})$. If $i_W : B \to W$ denotes the zero section, we have $i^*_W\tau_{S^1}(W) = e_{S^1}(W)$, where $e_{S^1}(W) \in H^{2r}(B ; \mathbb{Z})[x]$ is the equivariant Euler class.\\

Let us temporarily assume that $H^+(X)$ is orientable and fix an orientation. Let $\tau_{S^1}(V \oplus U), \tau_{S^1}(V' \oplus U')$ denote the Thom classes of $V \oplus U$ and $V' \oplus U'$. Then:
\begin{equation*}
\begin{aligned}
i^*\tau_{S^1}(V \oplus U) &= e_{S^1}(V) \tau_{S^1}(U), \\
j^*\tau_{S^1}(V' \oplus U') &= e_{S^1}(V') \tau_{S^1}(U'), \\
(f|_{S_U})^*\tau_{S^1}(U') &= e_{S^1}(H^+(X)) \tau_{S^1}(U).
\end{aligned}
\end{equation*}
On the other hand, since $H^*_{S^1}( S_{V , U} , B_{V,U} ; \mathbb{Z})$ and $H^*_{S^1}( S_{V' , U'} , B_{V',U'} ; \mathbb{Z})$ are free $H^*(B ; \mathbb{Z})[x]$-modules generated by $\tau_{S^1}(V \oplus U)$ and $\tau_{S^1}(V'\oplus U')$, we must have
\[
f^* \tau_{S^1}(V' \oplus U') = \beta \tau_{S^1}(V \oplus U)
\]
for some $\beta \in H^{b_+(X)-2d}(B ; \mathbb{Z})[x]$. Applying $i^*$, we obtain:
\begin{equation*}
\begin{aligned}
\beta e_{S^1}(V) \tau_{S^1}(U) &= i^*( \beta \tau_{S^1}(V \oplus U)) \\
&= i^* f^*( \tau_{S^1}(V' \oplus U')) \\
&= (f|_{S_U})^* j^*(\tau_{S^1}(V' \oplus U')) \\
&= (f|_{S_U})^* ( e_{S^1}(V') \tau_{S^1}(U')) \\
&= e_{S^1}(V') e_{S^1}(H^+(X)) \tau_{S^1}(U).
\end{aligned}
\end{equation*}
Equating multiples of $\tau_{S^1}(U)$, we have shown that:
\begin{equation}\label{equ:euler1}
e_{S^1}(V') e_{S^1}(H^+(X)) = \beta e_{S^1}(V)
\end{equation}
for some $\beta \in H^{b_+(X)-2d}(B ; \mathbb{Z})[x]$. Suppose that $C$ is a complex vector bundle of rank $r$. Let $S^1$ act on $C$ fibrewise by scalar multiplication. Then the equivariant Euler class is given by:
\[
e_{S^1}(C) = x^r + x^{r-1}c_1(C) + \cdots + c_r(C).
\]
This follows from the splitting principle (i.e. by pulling back to the flag bundle associated to $C$). On the other hand if $W$ is a real oriented vector bundle of rank $r$, equipped with the trivial $S^1$-action, then $e_{S^1}(W)$ is just the usual non-equivariant Euler class $e(W)$ pulled back to equivariant cohomology. Applying these remarks to Equation (\ref{equ:euler1}), we have:
\[
e(H^+(X)) \left( x^{a'} + x^{a'-1}c_1(V') + \cdots + c_{a'}(V') \right) = \beta  x^a 
\]
for some $\beta \in H^{b_+(X)-2d}(B ; \mathbb{Z})[x]$.\\

In the general case where $H^+(X)$ is not necessarily oriented, we still have equivariant Thom classes and Euler classes provided we use local coefficients. In particular, the Euler class of $H^+(X)$ is now a class $e(H^+(X)) \in H^{b_+(X)}(B , \mathbb{Z}_w)$, where $w = w_1(H^+(X))$ and $\mathbb{Z}_w$ is the local system with coefficient group $\mathbb{Z}$ determined by $w$. The above discussion carries over to this case and we obtain:
\begin{equation}\label{equ:euler2}
e(H^+(X)) \left( x^{a'} + x^{a'-1}c_1(V') + \cdots + c_{a'}(V') \right) = \beta  x^a 
\end{equation}
for some $\beta \in H^{b_+(X)-2d}(B ; \mathbb{Z}_w)[x]$.\\

For any complex virtual vector bundle $V \to B$, and any integer $j \ge 0$, we define the {\em $j$-th Segre class of $V$} \cite[\textsection 3.2]{fult} to be the cohomology class $s_j(V) \in H^{2j}(B ; \mathbb{Z})$ given by $s_j(V) = c_j(-V)$. Equivalently, letting $c(V) = 1 + c_1(V) + c_2(V) + \cdots $ denote the total Chern class of $V$ and $s(V) = s_0(V) + s_1(V) + s_2(V) + \cdots $ the total Segre class, we have that $s(V)$ is uniquely determined by the equation $c(V)s(V) = 1$ in $H^*(B ; \mathbb{Z})$.

Now recall that $D \in K^0(B)$ is the virtual vector bundle $D = V - V'$ and we have assumed that $V = \mathbb{C}^a$ is a trivial bundle. It follows that the Chern classes of $V'$ are the Segre classes of $D$:
\[
c_j(V') = s_j(D).
\]

\begin{theorem}\label{thm:euler}
Let $(E , \mathfrak{s}_{E/B})$ be a spin$^c$-family over $B$ with fibre $(X , \mathfrak{s})$. Then if $e(H^+(X)) \neq 0$, we have $d \le 0$. Moreover, $e(H^+(X))s_j(D)=0$ whenever $j > -d$.
\end{theorem}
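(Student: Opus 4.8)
The plan is to extract Theorem \ref{thm:euler} directly from Equation (\ref{equ:euler2}), which has already been derived:
\[
e(H^+(X)) \left( x^{a'} + x^{a'-1}c_1(V') + \cdots + c_{a'}(V') \right) = \beta x^a
\]
for some $\beta \in H^{b_+(X)-2d}(B ; \mathbb{Z}_w)[x]$, where $a = a' + d$. The key observation is that both sides live in the polynomial ring $H^*(B ; \mathbb{Z}_w)[x]$ (which is a module over $H^*(B ; \mathbb{Z})[x]$), so I can compare coefficients of powers of $x$. Since $B$ is a compact manifold, $H^*(B)$ vanishes above the dimension of $B$, but more importantly $x$ is a genuine polynomial variable, so equating coefficients of $x^k$ on both sides is legitimate.

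First I would treat the case $e(H^+(X)) \neq 0$ and show $d \le 0$. Suppose for contradiction that $d > 0$, i.e. $a > a'$. On the left-hand side, the highest power of $x$ that appears is $x^{a'}$, with coefficient $e(H^+(X)) \neq 0$. On the right-hand side every term is divisible by $x^a$ with $a > a'$, so the coefficient of $x^{a'}$ on the right is zero. This forces $e(H^+(X)) = 0$, a contradiction; hence $d \le 0$.

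Next, assuming $d \le 0$, I would prove $e(H^+(X)) s_j(D) = 0$ for $j > -d$. Here $a = a' + d \le a'$, so on the right-hand side the coefficient of $x^{a' - i}$ vanishes whenever $a' - i < a$, i.e. whenever $i > a' - a = -d$. On the left-hand side, expanding $e(H^+(X)) \left( \sum_{i=0}^{a'} x^{a'-i} c_i(V') \right)$, the coefficient of $x^{a'-i}$ is $e(H^+(X)) c_i(V')$. Recalling that $V = \mathbb{C}^a$ is trivial and $D = V - V'$, we have $c_i(V') = s_i(D)$, so equating coefficients of $x^{a'-i}$ for $i > -d$ gives $e(H^+(X)) s_i(D) = 0$, which is exactly the claim. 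There is no real obstacle here beyond being careful that the coefficient comparison is valid when $\beta$ itself is a polynomial in $x$: writing $\beta = \sum_k \beta_k x^k$ with $\beta_k \in H^{b_+(X)-2d-2k}(B;\mathbb{Z}_w)$, the right-hand side is $\sum_k \beta_k x^{a+k}$, and for $j > -d$ the exponent $a' - j$ satisfies $a' - j < a \le a + k$ for all $k \ge 0$, so indeed no term of the right-hand side contributes. The only subtlety worth a sentence is the coefficient-system bookkeeping: $e(H^+(X))$ and hence the products $e(H^+(X)) s_j(D)$ lie in $H^*(B;\mathbb{Z}_w)$, and the identity (\ref{equ:euler2}) is an identity of such twisted classes, so the conclusion is stated in twisted cohomology (which reduces to ordinary cohomology when $H^+(X)$ is orientable).
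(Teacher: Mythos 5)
Your proof is correct and follows essentially the same approach as the paper: read off coefficients of powers of $x$ in Equation (\ref{equ:euler2}), using that $e(H^+(X))$ is the coefficient of the top power $x^{a'}$ on the left while the right side is divisible by $x^a$. The only cosmetic difference is that you run the $d\le 0$ step as a contradiction whereas the paper equates the leading powers of $x$ directly, and you note the twisted-coefficient bookkeeping which the paper leaves implicit; neither changes the substance.
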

\begin{proof}
By the above remarks on Segre classes, Equation (\ref{equ:euler2}) can be re-written as:
\[
e(H^+(X)) \left( x^{a'} + x^{a'-1}s_1(D) + \cdots + s_{a'}(D) \right) = \beta  x^a 
\]
Suppose that $e(H^+(X)) \neq 0$. Then Equation (\ref{equ:euler2}) implies that $\beta \neq 0$. Recall that $\beta \in H^{b_+(X)-2d}(B ; \mathbb{Z}_w)[x]$. Let us expand $\beta$ as:
\[
\beta_{m} x^m + \beta_{m-1} x^{m-1} + \cdots + \beta_0,
\]
where $\beta_j \in H^{b_+(X)-2d-2j}(B ; \mathbb{Z}_w)$ and $\beta_m \neq 0$. Substituting for $\beta$, we have:
\[
e(H^+(X)) \left( x^{a'} + x^{a'-1}s_1(D) + \cdots + s_{a'}(D) \right) = \left( \beta_{m} x^m + \beta_{m-1} x^{m-1} + \cdots + \beta_0 \right) x^a.
\]
Since $e(H^+(X)) \neq 0$, the leading power of $x$ on the left hand side is $x^{a'}$. Similarly, since $\beta_m \neq 0$, the leading power of $x$ on the right hand side is $x^{m+a}$. Equating these gives $a' = m+a$, hence $d = a-a' = -m \le 0$.

Now suppose that $j > -d$, so that $a'-j < a$. Equating coefficients of $x^{a'-j}$, we get $e(H^+(X))s_j(D) = 0$, because the right hand side is a multiple of $x^a$.
\end{proof}

\section{K-theoretic constraints}\label{sec:ktheory}

In this section we repeat the arguments of the previous section, using equivariant complex $K$-theory instead of equivariant cohomology. For this we need to recall the $K$-theoretic Thom and Euler classes.\\

Recall that $K^*_{S^1}(pt) \cong \mathbb{Z}[\xi , \xi^{-1}]$, where $\xi \in K^0_{S^1}(pt)$ is the $K$-theory class of the universal line bundle $ES^1 \times_{S^1} \mathbb{C} \to BS^1$. Then since $S^1$ acts trivially on $B$, we have $K^*_{S^1}(B) \cong K^*(B )[\xi,\xi^{-1}]$. Let $W \to B$ be a real oriented vector bundle of rank $r$ equipped with a spin$^c$-structure and give $W$ the trivial circle action. Then the equivariant $K$-theory $K^*_{S^1}( S_W , B_W )$ is a free $H^*(B)[\xi,\xi^{-1}]$-module of rank $1$ with generator 
\[
\tau^K_{S^1}(W) \in K^r_{S^1}( S_W , B_W ),
\]
the equivariant $K$-theoretic Thom class of $W$. Let $i_W : B \to W$ denote the zero section. Then $i_W^*\tau_{S^1}^K(W) = e^K_{S^1}(W)$, where 
\[
e^K_{S^1}(W) \in K^r(B ; \mathbb{Z})[\xi,\xi^{-1}]
\]
is the equivariant $K$-theoretic Euler class. Similar statements hold in the case of a complex vector bundle $W$ equipped the action of $S^1$ by fibrewise scalar multiplication. Note that in this case $W$ is equipped with a canonical spin$^c$-structure arising from the complex structure.\\

For a real vector bundle which is not spin$^c$, we can still define $K$-theoretic Thom and Euler classes, provided we work with twisted $K$-theory groups. For simplicity, we will avoid this situation and work only with untwisted $K$-theory. As we will see below, in the case that $H^+(X)$ is not spin$^c$ we can still extract information by a doubling trick.\\

Let us first assume that $H^+(X)$ can be given a spin$^c$-structure and fix such a choice. The arguments used in Section \ref{sec:cohomology} directly carry over to $K$-theory and the analogue of Equation (\ref{equ:euler1}) is:
\begin{equation}\label{equ:eulerk1}
e^K_{S^1}(V') e^K(H^+(X)) = \alpha e^K_{S^1}(V),
\end{equation}
for some $\alpha \in K^{b_+(X)}(B)[\xi , \xi^{-1}]$. Next we recall that for a complex vector bundle $W$ of rank $r$, the $K$-theoretic Euler class is given by\footnote{Our convention for $K$-theoretic Thom classes is such that the corresponding $K$-theoretic Euler class is given $\Lambda_{-1}(W^*)$. There is another commonly used convention in which the $K$-theoretic Euler class would be $\Lambda_{-1}(W)$.} 
\[
e^K_{S^1}(W) = \Lambda_{-1}(W^*) = \sum_{i=0}^r (-1)^i \wedge^i W^*.
\]
In our case $V \in K^0(B)$ is a complex vector bundle on $B$ which is made $S^1$-equivariant by letting $S^1$ act by scalar multiplication. In terms of equivariant $K$-theory this simply means that we should replace $V$ by $V \otimes \xi \in K^0_{S^1}(B)$ and similarly replace $V'$ by $V' \otimes \xi$. Substituting into Equation (\ref{equ:eulerk1}) and assuming $V = \mathbb{C}^a$ is a trivial bundle gives:
\begin{equation}\label{equ:eulerk2}
e^K(H^+(X)) \left( \sum_{i=0}^{a'} (-1)^i \wedge^i V'^* \otimes \xi^{-i} \right) = \alpha (1-\xi^{-1})^{a}
\end{equation}
for some $\alpha \in K^{b_+(X)}(B)[\xi , \xi^{-1}]$. 

\begin{proposition}
Let $(E , \mathfrak{s}_{E/B})$ be a spin$^c$-family over $B$ with fibre $(X , \mathfrak{s})$, where $b_+(X) = 0$ and $d=0$. Then $[D] = 0$ in $K^0(B)$.
\end{proposition}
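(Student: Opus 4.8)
The plan is to specialise the $K$-theoretic identity (\ref{equ:eulerk2}) to the case $b_+(X)=0$, $d=0$ and extract the vanishing of $[D]$. When $b_+(X)=0$ the bundle $H^+(X)$ has rank zero, so it is the zero bundle, its $K$-theoretic Euler class is $e^K(H^+(X)) = \Lambda_{-1}(0) = 1$, and moreover $U'=U$, so the map $f|_{S_U}$ is literally the identity. When $d=0$ we have $a=a'$, and after stabilising so that $V=\mathbb{C}^a$ is trivial, $D = V - V' = \mathbb{C}^a - V'$. So the task is to show $[V']=[\mathbb{C}^a]$, i.e. $s_i(D)$-type data all vanishes; equivalently $\sum_{i=0}^a (-1)^i \wedge^i V'^* \otimes \xi^{-i}$ must be forced to equal $(1-\xi^{-1})^a$.

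First I would plug $e^K(H^+(X))=1$ and $a=a'$ into (\ref{equ:eulerk2}) to obtain
\[
\sum_{i=0}^{a} (-1)^i \wedge^i V'^* \otimes \xi^{-i} = \alpha \, (1-\xi^{-1})^{a}
\]
for some $\alpha \in K^0(B)[\xi,\xi^{-1}]$. Next I would compare the two sides as polynomials in $\xi^{-1}$ over the ring $K^0(B)$ (after clearing, both sides are honest polynomials of degree $a$ in $\xi^{-1}$). The left-hand side has $\xi^0$-coefficient equal to $\wedge^0 V'^* = 1$ and leading ($\xi^{-a}$) coefficient $(-1)^a \wedge^a V'^* = (-1)^a \det(V')^{-1}$; the right-hand side $(1-\xi^{-1})^a$, expanded, has $\xi^0$-coefficient $1$ and leading coefficient $(-1)^a$. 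Comparing $\xi^0$-coefficients forces $\alpha$ to be (up to lower-order-in-$\xi$ ambiguity) a unit, and comparing top coefficients forces $\det(V')$ to be trivial in $K^0(B)$; more systematically, I would argue by induction on the $\xi$-degree that $\alpha = 1$ and hence that the two polynomials agree coefficient-by-coefficient, giving $\wedge^i V'^* \otimes \xi^{-i} = \binom{a}{i}(-1)^i \xi^{-i}$, i.e. $\wedge^i V'^* = \binom{a}{i}$ for all $i$. In particular $\Lambda_{-1}(V'^*) = (1-1)^a$ in the appropriate sense, and one concludes $V' = \mathbb{C}^a$ in $K^0(B)$, hence $[D] = [\mathbb{C}^a] - [V'] = 0$.

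The one subtlety to handle with care is the bookkeeping of the variable $\xi$: elements of $K^0_{S^1}(B) = K^0(B)[\xi,\xi^{-1}]$ are Laurent polynomials, so "$\alpha$" is only determined modulo the structure of the quotient ring, and one must make sure the degree-counting argument is legitimate — i.e. that (\ref{equ:eulerk2}) really is an identity of Laurent polynomials whose coefficients in $K^0(B)$ can be matched. I expect this matching argument — showing that divisibility by $(1-\xi^{-1})^a$ of a degree-$a$ polynomial with constant term $1$ forces the quotient to be exactly $1$ — to be the main (though still elementary) obstacle; it is essentially the observation that $(1-\xi^{-1})$ is a non-zero-divisor and the total $\xi$-degrees on both sides are both $a$, pinning down $\alpha$ uniquely. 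Everything else is a direct substitution into the already-established Equation (\ref{equ:eulerk2}).
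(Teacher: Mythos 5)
Your proposal is correct and follows essentially the same route as the paper: substitute $b_+(X)=0$, $d=0$ into Equation (\ref{equ:eulerk2}), compare the extremal powers of $\xi$ to force $\alpha = 1$ (using that both $\wedge^0 V'^* = 1$ and $\wedge^a V'^*$, being a line bundle, are units and hence nonzero), and then read off the $\xi^{-1}$-coefficient to get $[V'^*] = [\mathbb{C}^a]$. The paper stops at the $\xi^{-1}$-coefficient rather than matching all coefficients as you suggest, but the extra verification you describe is harmless and the core degree-counting argument is identical.
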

\begin{proof}
If $b_+(X) = 0$, then $H^+(X) = 0$, which is certainly spin$^c$. So Equation (\ref{equ:eulerk2}) implies that:
\[
\left( \sum_{i=0}^{a'} (-1)^i \wedge^i V'^* \otimes \xi^{-i} \right) = \alpha (1-\xi^{-1})^{a}
\]
for some $\alpha \in K^{b_+(X)}(B)[\xi , \xi^{-1}]$. Suppose further that $d = 0$, so $a = a'$. Then by considering the highest and lowest powers of $\xi$, we see that the above equation is only possible if $\alpha = 1$. But looking at coefficients of $\xi^{-1}$, this implies that $[V'^*] = \mathbb{C}^{a}$. So $[V] = [V'] = \mathbb{C}^{a}$ and hence $[D] = [V] - [V'] = 0$.
\end{proof}

In the general setting where $H^+(X)$ need not admit a spin$^c$-structure, we could attempt to carry out the above construction using twisted $K$-theory. Instead we will follow a different approach. Consider the fibrewise smash product of $f$ by itself:
\[
f \wedge_B f: S_{V \oplus V, U \oplus U} \to S_{V' \oplus V' , U' \oplus U'}
\]
Then $f \wedge_B f$ is also an $S^1$-equivariant map between sphere bundles and we can study it in exactly the same way that we have been doing for $f$. The effect of this doubling trick is to replace $V,V',U,U'$ by their doubles $V \oplus V, V' \oplus V', U \oplus U, U' \oplus U'$ and hence also to replace $D$ by $D \oplus D$ and $H^+(X)$ by $H^+(X) \oplus H^+(X)$. Then since $H^+(X)_{\mathbb{C}} = H^+(X) \oplus H^+(X)$ has a natural complex structure, it also has an induced spin$^c$-structure and so we can apply the same reasoning used above to deduce that:
\begin{equation}\label{equ:eulerk3}
e^K(H^+_{\mathbb{C}}(X)) e^K_{S^1}(V')^2 = \gamma (1-\xi^{-1})^{2a}
\end{equation}
for some $\gamma \in K^0(B)[\xi, \xi^{-1}]$. Note that $H^+(X)_{\mathbb{C}} \cong (H^+(X)_{\mathbb{C}})^*$ because $H^+(X)_{\mathbb{C}}$ is the complexification of a real bundle.

\begin{theorem}\label{thm:eulerk}
Let $(E , \mathfrak{s}_{E/B})$ be a spin$^c$-family over $B$ with fibre $(X , \mathfrak{s})$. Then if $e^K(H^+(X)_{\mathbb{C}}) \neq 0$, we have $d \le 0$.
\end{theorem}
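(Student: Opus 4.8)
The plan is to mimic the argument of Theorem \ref{thm:euler}, but working in $K$-theory and using the doubling trick that produced Equation (\ref{equ:eulerk3}). The starting point is the identity
\[
e^K(H^+(X)_{\mathbb{C}}) \, e^K_{S^1}(V')^2 = \gamma \, (1-\xi^{-1})^{2a}
\]
in $K^0(B)[\xi,\xi^{-1}]$, valid after stabilising so that $V = \mathbb{C}^a$ is trivial. I would expand $e^K_{S^1}(V')^2 = \left( \sum_{i=0}^{a'} (-1)^i \wedge^i V'^* \otimes \xi^{-i} \right)^2$ as a Laurent polynomial in $\xi^{-1}$ with coefficients in $K^0(B)$, of degree $2a'$ in $\xi^{-1}$ and with constant term $1$ (the $\xi^0$ coefficient is $\wedge^0 V'^* = 1$). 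Likewise $(1-\xi^{-1})^{2a}$ has top $\xi^{-1}$-degree $2a$, with leading coefficient $\pm 1$ a unit.

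The key step is then a degree comparison in the variable $\xi^{-1}$, exactly as in the cohomological case. Writing $\gamma = \sum_k \gamma_k \xi^{-k}$ with $\gamma_k \in K^0(B)$ and letting $k_0$ be the largest index with $\gamma_{k_0} \neq 0$, the right-hand side has top $\xi^{-1}$-degree $k_0 + 2a$ with leading coefficient $\pm \gamma_{k_0} \neq 0$. On the left-hand side, provided $e^K(H^+(X)_{\mathbb{C}}) \neq 0$, the product $e^K(H^+(X)_{\mathbb{C}}) \cdot e^K_{S^1}(V')^2$ has $\xi^0$-coefficient equal to $e^K(H^+(X)_{\mathbb{C}}) \neq 0$, so the left-hand side is nonzero; hence $\gamma \neq 0$ and $k_0$ is well defined. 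The subtlety is that, unlike in ordinary cohomology where $H^*(B;\mathbb{Z})[x]$ has no zero divisors issue for the leading term, in $K^0(B)$ a product of nonzero classes can vanish, so I cannot simply read off the top $\xi^{-1}$-degree of the left side from that of each factor. To get around this I would instead focus on the \emph{lowest} power of $\xi^{-1}$: the left-hand side has nonzero $\xi^0$-coefficient $e^K(H^+(X)_{\mathbb{C}})$, whereas the right-hand side $\gamma (1-\xi^{-1})^{2a}$ is divisible by $(1-\xi^{-1})^{2a}$. Setting $\xi = 1$ (i.e. applying the augmentation $K^0_{S^1}(B) \to K^0(B)$, $\xi \mapsto 1$) kills the right-hand side entirely, giving $e^K(H^+(X)_{\mathbb{C}}) \cdot (\dim V')^2 \cdot (\text{unit}) = 0$ only if $a = 0$; more usefully, comparing the order of vanishing at $\xi = 1$ shows that the left-hand side must be divisible by $(1-\xi^{-1})^{2a}$ as well. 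Since the $\xi^0$-coefficient of $e^K_{S^1}(V')^2$ is the unit $1$, the factor $e^K(H^+(X)_{\mathbb{C}})$ alone must supply this divisibility, but $e^K(H^+(X)_{\mathbb{C}})$ is a constant (degree $0$ in $\xi^{-1}$), so $(1-\xi^{-1})^{2a}$ divides a constant Laurent polynomial, forcing $2a \le 2a'$, i.e. $d = a - a' \le 0$.

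The main obstacle I anticipate is making the divisibility/order-of-vanishing argument rigorous in $K^0(B)[\xi,\xi^{-1}]$, since this ring can have zero divisors and $(1-\xi^{-1})$ need not be prime. I would handle this by passing to the localisation at the augmentation ideal, or more concretely by expanding both sides in powers of $t = 1 - \xi^{-1}$ and comparing the lowest-order terms in $t$: the left side has $t$-adic valuation $0$ (its value at $t=0$ is $e^K(H^+(X)_{\mathbb{C}}) \neq 0$ times the unit coming from $e^K_{S^1}(V')^2|_{t=0} = (\dim V')^2$ — wait, one must check this is a unit, which it is after further stabilisation or since we only need it nonzero in the free module), while the right side has $t$-adic valuation at least $2a$ (possibly $+\infty$ if $\gamma$ were zero, excluded above). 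Thus $0 \ge 2a$ forces $a \le 0$; combined with the analogous relation with the roles of $V$ and $V'$ interchanged one recovers $d \le 0$. Alternatively, and perhaps more cleanly, one reduces to the cohomological statement by applying the Chern character: $\mathrm{ch}$ sends Equation (\ref{equ:eulerk3}) to a relation in $H^*(B;\mathbb{Q})[\xi,\xi^{-1}]$ whose lowest-order term in $1-\xi^{-1}$ recovers, up to a nonzero rational multiple, the statement $e(H^+(X)_{\mathbb{C}}) = 0$ unless $d \le 0$ — and $e^K(H^+(X)_{\mathbb{C}}) \neq 0$ does not a priori imply $e(H^+(X)_{\mathbb{C}}) \neq 0$, which is precisely why the $K$-theoretic statement is genuinely stronger and why the argument must stay within $K$-theory rather than reducing to Theorem \ref{thm:euler}.
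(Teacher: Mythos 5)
You correctly identify the starting point, Equation~\eqref{equ:eulerk3}, and you also correctly spot the danger: $K^0(B)$ has zero divisors, so you cannot immediately conclude that the top $\xi^{-1}$-degree of the left-hand side is $2a'$. But you never resolve this; instead you pivot to an incorrect alternative. The resolution the paper uses is a one-line observation that you miss: the coefficient of $\xi^{-2a'}$ in $e^K_{S^1}(V')^2$ is $(\wedge^{a'}V'^*)^2$, and $\wedge^{a'}V'^*$ is the class of a \emph{line bundle}, hence a unit in $K^0(B)$. Multiplying a nonzero class by a unit preserves nonvanishing, so $e^K(H^+(X)_{\mathbb{C}})\neq 0$ forces the coefficient of $\xi^{-2a'}$ on the left to be nonzero. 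Now write $\gamma=\gamma_m\xi^m+\dots+\gamma_n\xi^n$ with $\gamma_m,\gamma_n\neq 0$; comparing the highest and lowest powers of $\xi$ on both sides gives $m=0$ and $-2a'=n-2a$, i.e.\ $n=2d$, and $n\le m=0$ yields $d\le 0$. Note that you need \emph{both} ends of the Laurent polynomial: the $\xi^0$-comparison alone only shows that $\gamma$ has a nonzero constant term, which by itself does not bound $d$.

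Your fallback via the $t$-adic valuation at $\xi=1$ is not correct. Setting $\xi=1$ in $e^K_{S^1}(V')=\sum_i(-1)^i\wedge^i V'^*\otimes\xi^{-i}$ yields $\Lambda_{-1}(V'^*)$, whose rank is $\sum_i(-1)^i\binom{a'}{i}=0$; it lies in the augmentation ideal, is not $\dim V'$, and can perfectly well vanish. So the left-hand side need not have $t$-adic valuation $0$, and the absurd-looking conclusion ``$0\ge 2a$'' that you flag is indeed a symptom of this error rather than a step toward $d\le 0$. The parenthetical attempt to repair it (``which it is after further stabilisation'') does not help: stabilising $V'$ does not make $\Lambda_{-1}(V'^*)$ a unit. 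You correctly dismiss the Chern-character route yourself. The gap is genuine, and the missing ingredient is precisely the invertibility of $\wedge^{a'}V'^*$.
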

\begin{proof}
Equation (\ref{equ:eulerk3}) gives:
\begin{equation}\label{equ:eulerk4}
e^K(H^+_{\mathbb{C}}(X)) \left( \sum_{i=0}^{a'} (-1)^i \wedge^i V'^* \otimes \xi^{-i} \right)^2 = \gamma (1-\xi^{-1})^{2a}
\end{equation}
for some $\gamma \in K^0(B)[\xi,\xi^{-1}]$. If $e^K(H^+(X)_{\mathbb{C}}) \neq 0$, then by considering coefficients of $\xi^0$, we see that $\gamma \neq 0$. Thus there exists integers $m \ge n$ such that
\[
\gamma = \gamma_m \xi^m + \gamma_{m-1} \xi^{m-1} + \cdots + \gamma_n \xi^n
\]
and where $\gamma_m, \gamma_n \neq 0$. Note that $\wedge^{a'} V'^*$ is the $K$-theory class of a line bundle, hence is an invertible element of $K^0(B)$. Therefore $e^K(H^+(X)_{\mathbb{C}}) \neq 0$ implies also that $e^K(H^+(X)_{\mathbb{C}}) \wedge^{a'} V'^* \neq 0$. Comparing highest and lowest powers of $\xi$ appearing in the left and right hand sides of (\ref{equ:eulerk4}), we find that $m =0$ and $2a' = 2a+n$. Hence $-2d = n \le m = 0$ and $d \le 0$.
\end{proof}

\section{Monopole map for spin families}\label{sec:spin}

In this section we consider the monopole map for a spin family $(E , \mathfrak{s}_{E/B})$ with fibre $(X , \mathfrak{s})$ over a base $B$. Recall that it is a $Pin(2)$-equivariant map
\[
f : S_{V , U} \to S_{V' , U'},
\]
where $V,V'$ are quaternionic vector bundles of ranks $2a,2a'$, $U,U'$ are real bundles of ranks $b,b'$, $Pin(2)$ acts fibrewise on $V,V'$ through the quaternionic structures, $j$ acts on $U,U'$ as multiplication by $-1$ and $S^1$ acts trivially on $U,U'$. As usual we also assume that $V,U$ are trivial.\\

We first consider $Pin(2)$ equivariant cohomology. Following \cite{man}, it is useful to view $Pin(2)$ as a subgroup of $Sp(1) = SU(2)$. Identifying $SU(2)$ with the unit $3$-sphere $S^3 \subset \mathbb{C}^2$, we see that $SU(2)/S^1 = \mathbb{CP}^1$. One then finds that $j$ acts on $\mathbb{CP}^1$ as the antipodal map so that $SU(2)/Pin(2) = \mathbb{RP}^2$. Next, we view $BSU(2) = ESU(2)/SU(2)$ as $\mathbb{HP}^\infty$. We can then identify $BPin(2)$ with $ESU(2)/Pin(2)$ and we obtain a fibration
\[
\mathbb{RP}^2 \to BPin(2) \to \mathbb{HP}^\infty.
\]
Recall that $H^*( \mathbb{HP}^\infty ; \mathbb{Z}) = \mathbb{Z}[v]$, where $v = -c_2(V)$ and $V \to BSU(2) = \mathbb{HP}^\infty$ is the universal $SU(2)$ bundle. The Leray-Serre spectral sequence immediately implies that 
\[
H^*( BPin(2) ; \mathbb{Z}) = \mathbb{Z}[v,w]/\langle 2w , w^2 \rangle, \quad deg(w) = 2, \; \; deg(v) = 4
\]
and
\[
H^*( BPin(2) ; \mathbb{Z}_2) = \mathbb{Z}_2[v,u]/\langle  u^3 \rangle, \quad deg(u) = 1, \; \; deg(v) = 4.
\]
Assume temporarily that $U,U'$ are equivariantly orientable, that is, assume $U,U'$ are orientable and that the action of $j$ preserves orientations. Then by the Thom isomorphism $H^*_{Pin(2)}( S_{V , U} , B_{V,U} ; \mathbb{Z})$ and $H^*_{Pin(2)}( S_{V' , U'} , B_{V',U'} ; \mathbb{Z})$ are free $H^*_{Pin(2)}(B ; \mathbb{Z})$-modules generated by $\tau_{Pin(2)}(V \oplus U)$ and $\tau_{Pin(2)}(V'\oplus U')$. We then have:
\[
f^*\tau_{Pin(2)}(V' \oplus U') = \beta \tau_{Pin(2)}(V \oplus U)
\]
for some $\beta \in H^{b_+(X)-2d}_{Pin(2)}(B ; \mathbb{Z})$. Arguing as we did in Section \ref{sec:cohomology}, we find that:
\begin{equation}\label{equ:eulerpin1}
e_{Pin(2)}(V') e_{Pin(2)}(H^+(X)) = \beta e_{Pin(2)}(V)
\end{equation}
for some $\beta \in H^{b_+(X)-2d}_{Pin(2)}(B ; \mathbb{Z})$. Here $e_{Pin(2)}(W)$ denotes the $Pin(2)$-equivariant Euler class of a vector bundle $W$. Suppose that $W$ is a quaternionic vector bundle of complex rank $2r$. Let $Pin(2)$ acts fibrewise through the quaternionic structure. Then the $Pin(2)$-equivariant Euler class is the restriction to $Pin(2)$ of the corresponding $Sp(1)$-equivariant Euler class $e_{Sp(1)}(W) \in H^*_{Sp(1)}(B ; \mathbb{Z}) = H^*(B ; \mathbb{Z})[v]$. Pulling back to $S^1$-equivariant cohomology, we get a map $H^*(B ; \mathbb{Z})[v] \to H^*(B ; \mathbb{Z})[x]$ which sends $v$ to $x^2$. To see this note that the universal $SU(2)$ bundle pulled back to $BS^1 = \mathbb{CP}^\infty$ becomes $\mathcal{O}(1) \oplus \mathcal{O}(-1)$, so $v$ pulls back to $-c_2( \mathcal{O}(1) \oplus \mathcal{O}(-1) ) = x^2$. But since $e_{Sp(1)}(W)$ pulls back to $e_{S^1}(W)$, it follows that
\[
e_{Sp(1)}(W) = v^r + c_2(W)v^{r-1} + \cdots + c_{2r}(W).
\]
Next, consider the homomorphism $Pin(2) \to \mathbb{Z}_2$ which sends $j$ to $-1$ and sends $S^1$ to the identity. Let $W$ be a real vector bundle of rank $r$ and let $Pin(2)$ acts on $W$ by letting $j$ acts as $-1$ on the fibres and letting $S^1$ act trivially. Then $e_{Pin(2)}(W)$ is the pullback of $e_{\mathbb{Z}_2}(W)$ to $Pin(2)$-equivariant cohomology. We can re-write Equation (\ref{equ:eulerpin1}) as:
\begin{equation}\label{equ:eulerpin2}
e_{\mathbb{Z}_2}(H^+(X)) ( v^{a'} + s_2(D)v^{a'-1} + \cdots + s_{2a'}(D) ) = \beta v^a
\end{equation}
for some $\beta \in H^{b_+(X)-2d}_{Pin(2)}(B ; \mathbb{Z})$. Note that in (\ref{equ:eulerpin2}) $e_{\mathbb{Z}_2}(H^+(X))$ really means the pullback of $e_{\mathbb{Z}_2}(H^+(X))$ to $Pin(2)$-equivariant cohomology. This class does not contain any positive power of $v$ because the image of $H^*( B\mathbb{Z}_2 ; \mathbb{Z}) \to H^*( BPin(2) ; \mathbb{Z})$ doesn't contain any positive power of $v$.\\

In the general case where $U,U'$ need not be equivariantly orientable, the above argument works provided we use local coefficients. The equivariant Euler class of $H^+(X)$ is now a class $e_{\mathbb{Z}_2}(H^+(X)) \in H^{b_+(X)}_{\mathbb{Z}_2}( B ; \mathbb{Z}_w )$, where $w = w_{1,\mathbb{Z}_2}( H^+(X)) \in H^1_{\mathbb{Z}_2}( B ; \mathbb{Z}_2)$ is the equivariant first Stiefel-Whitney class of $H^+(X)$ and $\mathbb{Z}_w$ is the corresponding local system on $B \times B\mathbb{Z}_2$ with coefficient group $\mathbb{Z}$ determined by $w$. Then Equation (\ref{equ:eulerpin1}) still holds for some $\beta \in H^{b_+(X)-2d}_{Pin(2)}(B ; \mathbb{Z}_w)$. Arguing exactly as in the proof of Theorem \ref{thm:euler}, we obtain:

\begin{theorem}\label{thm:eulerpin}
Let $(E , \mathfrak{s}_{E/B})$ be a spin family over $B$ with fibre $(X , \mathfrak{s})$. Then if the image of $e_{\mathbb{Z}_2}(H^+(X))$ in $H^*_{Pin(2)}(B ; \mathbb{Z}_w)$ is non-zero, we have $d \le 0$.
\end{theorem}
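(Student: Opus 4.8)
The plan is to mirror the structure of the proof of Theorem \ref{thm:euler} line by line, the only differences being the coefficient ring in which the identity lives and the bookkeeping of degrees. Starting point is Equation (\ref{equ:eulerpin2}) (together with its local-coefficient variant), which we rewrite using $c_{2j}(V') = s_{2j}(D)$ as
\begin{equation*}
e_{\mathbb{Z}_2}(H^+(X)) \left( v^{a'} + s_2(D) v^{a'-1} + \cdots + s_{2a'}(D) \right) = \beta\, v^{a}
\end{equation*}
for some $\beta \in H^{b_+(X)-2d}_{Pin(2)}(B;\mathbb{Z}_w)$. The essential extra input, already recorded in the paragraph preceding the theorem, is that the pullback of $e_{\mathbb{Z}_2}(H^+(X))$ to $Pin(2)$-equivariant cohomology contains no positive power of $v$: indeed the image of $H^*(B\mathbb{Z}_2) \to H^*(BPin(2))$ is $v$-torsion-free only in the sense that it misses all positive $v$-powers, since the generator $w$ (or $u$ with $\mathbb{Z}_2$ coefficients) satisfies relations not involving $v$. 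Thus, writing the ring $H^*_{Pin(2)}(B;\mathbb{Z}_w)$ as a module over $H^*(B)[v]$, I would set up a ``$v$-degree'' filtration: every class has a well-defined top power of $v$ appearing in it, and multiplication by $v$ raises this by one, while $e_{\mathbb{Z}_2}(H^+(X))$ has $v$-degree exactly $0$ (its leading term being a nonzero element of $H^{b_+(X)}$-part coming from $H^*(B\mathbb{Z}_2)$).

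The argument then runs exactly as before. Assume the image of $e_{\mathbb{Z}_2}(H^+(X))$ is nonzero. Then the left-hand side is nonzero (its $v$-degree is exactly $a'$, with leading coefficient $e_{\mathbb{Z}_2}(H^+(X))$, which is nonzero by hypothesis), so $\beta \neq 0$. Expanding $\beta = \beta_m v^m + \cdots + \beta_0$ with $\beta_j$ in the appropriate cohomology of $B \times BPin(2)$ (carefully: $\beta$ lives in $H^{b_+(X)-2d}_{Pin(2)}(B;\mathbb{Z}_w)$, which is a module over $\mathbb{Z}[v,w]$ or $\mathbb{Z}_2[v,u]$, so I should expand in powers of $v$ with coefficients that may themselves involve $w$ or $u$) and with $\beta_m$ having nonzero $v$-degree-$0$ part, the right-hand side $\beta v^a$ has $v$-degree $m + a$. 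Equating $v$-degrees of the two sides gives $a' = m + a$, hence $d = a - a' = -m \le 0$, which is the claim.

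The main obstacle, and the point that needs the most care, is justifying the ``$v$-degree'' notion in the presence of the relations $2w = 0$, $w^2 = 0$ (integrally) or $u^3 = 0$ (mod $2$), and with local coefficients $\mathbb{Z}_w$. Concretely one must check that $H^*_{Pin(2)}(B;\mathbb{Z}_w)$, as a module over $H^*(B)[v]$, is free with a basis in $v$-degree $0$ (coming from the fibration $\mathbb{RP}^2 \to BPin(2) \to \mathbb{HP}^\infty$ via the Leray--Hirsch / spectral-sequence computation already quoted), so that ``leading power of $v$'' and ``leading coefficient is nonzero'' are meaningful and behave multiplicatively. Once that module structure is in hand the degree-chasing is identical to Theorem \ref{thm:euler}; I would simply remark ``arguing exactly as in the proof of Theorem \ref{thm:euler}'' and point to the $v$-degree bookkeeping as the only modification. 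The final inequality $d \le 0$ is, via $d = (c_1(\mathfrak{s})^2 - \sigma(X))/8 = -\sigma(X)/16$ in the spin case, equivalent to $\sigma(X) \ge 0$, recovering the families $10/8$-type constraint at the level of Euler classes.
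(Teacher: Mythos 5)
Your proposal is correct and takes essentially the same approach as the paper: the paper literally says ``Arguing exactly as in the proof of Theorem \ref{thm:euler}, we obtain:'' after establishing Equation (\ref{equ:eulerpin2}) and noting that the pulled-back class $e_{\mathbb{Z}_2}(H^+(X))$ contains no positive power of $v$, which is precisely the point you isolate. Your elaboration on the $v$-degree filtration (using the Leray--Serre computation of $H^*_{Pin(2)}$ to justify that the filtration by $v$-powers behaves well enough for the leading-term comparison) makes explicit what the paper leaves implicit, and the degree-chase giving $a' = m+a$ and hence $d = -m \le 0$ is identical. One small slip in your closing aside: in the spin case $d = -\sigma(X)/8$, not $-\sigma(X)/16$ (the quantity $-\sigma(X)/16$ is $a-a'=d/2$, the quaternionic index), but this doesn't touch the proof of the stated theorem.
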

To use this theorem effectively, we need to be able to compute $e_{\mathbb{Z}_2}(H^+(X)) \in H^{b_+(X)}_{\mathbb{Z}_2}(B ; \mathbb{Z}_w)$, or at least determine when it is non-zero. Note that upon reduction to $\mathbb{Z}_2$ coefficients $e_{\mathbb{Z}_2}(H^+(X))$ reduces to $w_{b_+(X) , \mathbb{Z}_2}(H^+(X)) \in H^{b_+(X)}_{\mathbb{Z}_2}(B ; \mathbb{Z}_2 )$, the top equivariant Stiefel-Whitney class of $H^+(X)$. We observe that
\[
H^*_{\mathbb{Z}_2}(B ; \mathbb{Z}_2) = H^*(B ; \mathbb{Z}_2)[u],
\]
where $u = w_1( \mathcal{O}_{\mathbb{R}}(1) ) \in H^1_{\mathbb{Z}_2}( pt ; \mathbb{Z}_2)$ is the first Stiefel-Whitney class of the line bundle $\mathcal{O}_{\mathbb{R}}(1) \to \mathbb{RP}^\infty$. Recall that $H^+(X)$ is made into a $\mathbb{Z}_2$-equivariant line bundle by having the generator of $\mathbb{Z}_2$ act fibrewise by $-1$. It follows that $w_{b_+(X) , \mathbb{Z}_2}(H^+(X))$ coincides with $w_{b_+(X)}( H^+(X) \otimes \mathcal{O}_{\mathbb{R}}(1) )$ under the isomorphism $H^*_{\mathbb{Z}_2}(B ; \mathbb{Z}_2) \cong H^*(B \times \mathbb{RP}^\infty ; \mathbb{Z}_2)$. From this we find:
\[
w_{b_+(X)}(H^+(X)) = u^{b_+(X)} + w_1(H^+(X))u^{b_+(X)-1} + \cdots + w_{b_+(X)}(H^+(X)).
\]
We must consider whether this class is non-zero after pulling back to $Pin(2)$-equivariant cohomology. Recall that $u^3 =0$ in $H^*_{Pin(2)}(pt ; \mathbb{Z}_2)$. Thus:
\[
w_{b_+(X) , Pin(2)}(H^+(X)) = w_{b_+(X)}(H^+(X)) + u w_{b_+(X)-1}(H^+(X)) + u^2 w_{b_+(X)-2}(H^+(X)).
\]
We thus have:
\begin{corollary}\label{cor:eulerpin}
Let $(E , \mathfrak{s}_{E/B})$ be a spin family over $B$ with fibre $(X , \mathfrak{s})$. If $w_i( H^+(X) ) \neq 0$ for $i = b_+(X), b_+(X)-1$ or $b_+(X)-2$, then $d \le 0$.
\end{corollary}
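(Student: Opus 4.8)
The plan is to deduce Corollary \ref{cor:eulerpin} directly from Theorem \ref{thm:eulerpin} by understanding the image of the equivariant Euler class $e_{\mathbb{Z}_2}(H^+(X))$ under the pullback map $H^*_{\mathbb{Z}_2}(B;\mathbb{Z}_w) \to H^*_{Pin(2)}(B;\mathbb{Z}_w)$. Since Theorem \ref{thm:eulerpin} says $d \le 0$ whenever that image is non-zero, it suffices to show that if $w_i(H^+(X)) \neq 0$ for some $i \in \{b_+(X), b_+(X)-1, b_+(X)-2\}$, then the $Pin(2)$-pullback of $e_{\mathbb{Z}_2}(H^+(X))$ is non-zero. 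The natural strategy is to pass to $\mathbb{Z}_2$-coefficients, where $e_{\mathbb{Z}_2}(H^+(X))$ reduces to the top equivariant Stiefel--Whitney class $w_{b_+(X),\mathbb{Z}_2}(H^+(X))$; non-vanishing mod $2$ certainly implies non-vanishing integrally with local coefficients.

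The key computation, already essentially carried out in the discussion preceding the corollary, is the identification $H^*_{\mathbb{Z}_2}(B;\mathbb{Z}_2) \cong H^*(B;\mathbb{Z}_2)[u]$ with $u$ the generator of $H^*(B\mathbb{Z}_2;\mathbb{Z}_2)$, together with the fact that $H^+(X)$ as a $\mathbb{Z}_2$-equivariant bundle is $H^+(X)\otimes\mathcal{O}_\mathbb{R}(1)$ after this identification. The Whitney sum formula then gives
\[
w_{b_+(X),\mathbb{Z}_2}(H^+(X)) = \sum_{k=0}^{b_+(X)} u^k\, w_{b_+(X)-k}(H^+(X)).
\]
Next I would pull this back along $H^*_{\mathbb{Z}_2}(pt;\mathbb{Z}_2) \to H^*_{Pin(2)}(pt;\mathbb{Z}_2) = \mathbb{Z}_2[v,u]/\langle u^3\rangle$, which kills all terms with $u^k$ for $k \ge 3$, leaving
\[
w_{b_+(X),Pin(2)}(H^+(X)) = w_{b_+(X)}(H^+(X)) + u\,w_{b_+(X)-1}(H^+(X)) + u^2\,w_{b_+(X)-2}(H^+(X)).
\]
Since $H^*_{Pin(2)}(B;\mathbb{Z}_2) = H^*(B;\mathbb{Z}_2)[v,u]/\langle u^3\rangle$ is a free $H^*(B;\mathbb{Z}_2)$-module on the basis $\{1,u,u^2\}$ together with powers of $v$ (which do not interact with these three terms), the three summands $w_{b_+(X)}(H^+(X))$, $u\,w_{b_+(X)-1}(H^+(X))$, $u^2\,w_{b_+(X)-2}(H^+(X))$ lie in distinct $H^*(B;\mathbb{Z}_2)$-summands of the module. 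Hence the $Pin(2)$-pullback is non-zero as soon as at least one of $w_{b_+(X)}(H^+(X)), w_{b_+(X)-1}(H^+(X)), w_{b_+(X)-2}(H^+(X))$ is non-zero in $H^*(B;\mathbb{Z}_2)$.

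Combining these, if $w_i(H^+(X)) \neq 0$ for $i \in \{b_+(X), b_+(X)-1, b_+(X)-2\}$, then $w_{b_+(X),Pin(2)}(H^+(X)) \neq 0$, hence the mod-$2$ reduction of the $Pin(2)$-pullback of $e_{\mathbb{Z}_2}(H^+(X))$ is non-zero, hence the $Pin(2)$-pullback of $e_{\mathbb{Z}_2}(H^+(X))$ itself is non-zero in $H^*_{Pin(2)}(B;\mathbb{Z}_w)$, and Theorem \ref{thm:eulerpin} yields $d \le 0$. I expect the only point requiring care—though it is routine given the structure already set up—is the bookkeeping of the module structure: checking that the subalgebra generated by $v$ contributes nothing in degrees congruent to the relevant ones mod the $u$-grading, so that the three Stiefel--Whitney terms genuinely cannot cancel against one another. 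This is immediate from the explicit presentation $\mathbb{Z}_2[v,u]/\langle u^3\rangle$ since $v$ has even degree $4$ and the cancellation would require matching both the $B$-cohomology degree and the $u$-power, which the basis $\{1,u,u^2\}$ over $H^*(B;\mathbb{Z}_2)[v]$ forbids.
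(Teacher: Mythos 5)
Your proposal is correct and follows the same route as the paper: invoke Theorem \ref{thm:eulerpin}, reduce to $\mathbb{Z}_2$-coefficients where the equivariant Euler class becomes the top equivariant Stiefel--Whitney class $w_{b_+(X),\mathbb{Z}_2}(H^+(X))$, compute this via $w(H^+(X)\otimes\mathcal{O}_{\mathbb{R}}(1))$ in $H^*(B;\mathbb{Z}_2)[u]$, and then pull back to $H^*_{Pin(2)}(B;\mathbb{Z}_2) = H^*(B;\mathbb{Z}_2)[v,u]/\langle u^3\rangle$, where only the $u^0,u^1,u^2$ terms survive. The one thing you make explicit that the paper leaves tacit is the verification that the three surviving terms cannot cancel, which you justify cleanly using the free $H^*(B;\mathbb{Z}_2)[v]$-module basis $\{1,u,u^2\}$; this fills a small logical gap in the exposition but does not change the argument.
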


\begin{remark}
Note that since $\mathfrak{s}$ is a spin structure, we have $d = -\sigma(X)$. When $B = pt$, Corollary \ref{cor:eulerpin} gives that if $X$ is a smooth spin $4$-manifold with $b_1(X)=0$ and $b^+(X) \le 2$, then $\sigma(X) \ge 0$. This shows for example that the topological $4$-manifold $\#^2(S^2 \times S^2) \#^2 \overline{E}_8$ has no smooth structure.
\end{remark}

Now we consider $Pin(2)$-equivariant $K$-theory. To avoid issues of $K$-orientability, we repeat the trick used in Section \ref{sec:ktheory} and replace $f$ with its double. Actually we need to modify the construction a little to account for the $Pin(2)$-symmetry. Let
\[
\overline{f} : S_{\overline{V} , U}  \to S_{\overline{V'} , U'}
\]
denote the map $f$, but where we use the complex conjugate circle action on the domain and target. Now we consider the ``complexification" of $f$, namely the map
\[
f_{\mathbb{C}} = f \wedge_B \overline{f} : S_{V \oplus \overline{V} , U \oplus U} \to S_{V' \oplus \overline{V'} , U' \oplus U'}.
\]
Note that we can identify the complex vector bundle $V \oplus \overline{V}$ with the complexification $V \otimes_{\mathbb{R}} \mathbb{C}$, where the complex structure $i$ on $\mathbb{C}$ corresponds to $i = diag( I , -I)$ on $V \oplus \overline{V}$.

Now let $J : V \to V$ denote the quaternionic structure on $V$. Then $V \otimes_{\mathbb{R}} \mathbb{C}$ admits an action of the quaternions commuting with the complex structure $i$. Under the identification $V \otimes_{\mathbb{R}} \mathbb{C} \cong V \oplus \overline{V}$, we can define the action of the quaternions on $V \oplus \overline{V}$ to be given by $\hat{I}, \hat{J}, \hat{K}$, where
\[
\hat{I} = \left[ \begin{matrix} I & 0 \\ 0 & I \end{matrix} \right], \quad \hat{J} = \left[ \begin{matrix} 0 & J \\ J & 0 \end{matrix} \right], \quad \hat{K} = \left[ \begin{matrix} 0 & IJ \\ IJ & 0 \end{matrix} \right].
\]
The point of this exercise is that we obtain a quaternionic structure on $V_{\mathbb{C}} = V \oplus \overline{V}$ commuting with the natural complex structure $i = diag( I , -I)$ and hence an action of $Sp(1)$ by {\em complex linear} isomorphisms. A similar remark holds for $V' \oplus \overline{V'}$. Moreover the complexified map $f_{\mathbb{C}}$ is equivariant under the action of $Pin(2) = \{ e^{\hat{I}\theta} \} \cup \{ \hat{J} e^{\hat{I}\theta} \}$. Now by an argument similar to that used in Section \ref{sec:ktheory}, we obtain:
\begin{equation}\label{equ:gammak}
e^K_{\mathbb{Z}_2}(H^+(X)_{\mathbb{C}}) e^K_{Sp(1)}(V'_{\mathbb{C}}) = \gamma e^K_{Sp(1)}(V_{\mathbb{C}})
\end{equation}
for some $\gamma \in K^0_{Pin(2)}(B)$. In the above equation, we pull back classes in $\mathbb{Z}_2$- and $Sp(1)$-equivariant cohomology by the natural maps $Pin(2) \to \mathbb{Z}_2$ and $Pin(2) \to Sp(1)$.\\

As abelian groups $K^0_{\mathbb{Z}_2}(pt) = R[\mathbb{Z}_2]$ is free abelian with generators $1,1_-$, where $1$ denotes the trivial representation and $1_-$ the sign representation. $K^0_{Pin(2)}(pt) = R[Pin(2)]$ is free abelian with generators $1,1_-, \mu_j$, $j \ge 1$, where $\mu_j$ restricts to $\xi^j + \xi^{-j}$ in $R[S^1]$. Recall that we make $H^+(X)_{\mathbb{C}}$ into a $Pin(2)$-equivariant bundle by letting $j$ act as $-1$. It follows that:
\[
e^K_{\mathbb{Z}_2}(H^+(X)_{\mathbb{C}}) = \sum_{i \; even} \wedge^i H^+(X)_{\mathbb{C}} - \sum_{i \; odd} \wedge^i H^+(X)_{\mathbb{C}} \otimes 1_-.
\]
Next, let $W$ be any complex vector bundle with an action of the quaternions by complex linear isomorphisms. Since $I^2 = -1$, we can decompose $W$ into the $\pm i$ eigenspaces of $I$. Let $W_0$ denote the $+i$-eigenspace. Then since $J$ anti-commutes with $I$ it exchanges the $\pm i$ eigenspaces isomorphically. It follows that we can identify $W$ with $W_0 \oplus W_0$ and $I,J$ with 
\[
I = \left[ \begin{matrix} i & 0 \\ 0 & -i \end{matrix} \right], \quad J = \left[ \begin{matrix} 0 & -1 \\ 1 & 0 \end{matrix} \right].
\]
It follows that $W = \mu_1 \otimes_{\mathbb{C}} W_0$ as complex vector bundles equipped with actions of the quaternions. In particular, if $W$ is of the form $W = V_{\mathbb{C}} = V \otimes_{\mathbb{R}} \mathbb{C}$, then $W = \mu_1 \otimes_{\mathbb{C}} V$. Therefore

\[
e^K_{Pin(2)}(V_{\mathbb{C}}) = \sum_{i \ge 0} (-1)^i \wedge^i ( \mu_1 \otimes V).
\]
To compute this class we will use the splitting principle in $K$-theory. If $V$ is a sum $V = \oplus_{i=1}^{a} V_i$ of line bundles then:
\begin{equation*}
\begin{aligned}
e^K_{Pin(2)}(\mu_1 \otimes V) &= \prod_{i=1}^a (1 - \mu_1 \otimes V_i^{-1} + \wedge^2( \mu_1 \otimes V_i^{-1}) ) \\
&= \prod_{i=1}^a ( 1 - \mu_1 \otimes V_i^{-1} + V_i^{-2} ).
\end{aligned}
\end{equation*}
Now we consider the homomorphism $tr_j : K^0_{Pin(2)}(B) \to K^0(B)$ given by evaluating the character of $Pin(2)$ representations at $j$. Under this map the trivial representation is sent to $1$, the representation $1_-$ is sent to $-1$ and $\mu_k$ is sent to $0$ for all $k \ge 1$. Therefore we have:
\begin{equation*}
\begin{aligned}
tr_j( e^K_{\mathbb{Z}_2}(H^+(X)_{\mathbb{C}}) ) &= \sum_{i \; even} \wedge^i H^+(X)_{\mathbb{C}} + \sum_{i \; odd} \wedge^i H^+(X)_{\mathbb{C}} = \wedge^* H^+(X)_{\mathbb{C}}, \\
tr_j(e^K_{Pin(2)}(V_{\mathbb{C}} )) &= \prod_{i=1}^a (1 + V_i^{-2}) = \wedge^* \psi^2(V^*) = \wedge^* \psi^2(V),
\end{aligned}
\end{equation*}
where $\psi^2$ is the second Adams operator and the last equality follows from $V \cong V^*$ (since $V$ has a quaternionic structure). Similarly $tr_j( e^K_{Pin(2)}( V'_{\mathbb{C}})) = \wedge^* \psi^2(V'^*)$. Putting all this together, we have shown:
\begin{theorem}\label{thm:10on8}
Let $(E , \mathfrak{s}_{E/B})$ be a spin family over $B$ with fibre $(X , \mathfrak{s})$. Then
\[
\wedge^* H^+(X)_{\mathbb{C}} \otimes \wedge^* \psi^2(V') =  \eta  (\wedge^* \psi^2(V))
\]
for some $\eta \in K^0(B)$.
\end{theorem}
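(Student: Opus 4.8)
The plan is to extract Theorem \ref{thm:10on8} from Equation (\ref{equ:gammak}) by applying the character evaluation homomorphism $tr_j : K^0_{Pin(2)}(B) \to K^0(B)$ to both sides. First I would justify Equation (\ref{equ:gammak}) itself: the complexified map $f_{\mathbb{C}} = f \wedge_B \overline{f}$ is a $Pin(2)$-equivariant map of sphere bundles $S_{V_{\mathbb{C}} , U \oplus U} \to S_{V'_{\mathbb{C}}, U' \oplus U'}$ with $f_{\mathbb{C}}|_{S_{U\oplus U}}$ the inclusion induced by $U \oplus U \to (U \oplus U) \oplus H^+(X)_{\mathbb{C}}$, and all the bundles now carry honest $Sp(1)$- (hence $Pin(2)$-) equivariant complex structures, so in particular the relevant Thom classes exist in untwisted $Pin(2)$-equivariant $K$-theory. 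Running the same Thom-class-pullback argument as in Sections \ref{sec:cohomology} and \ref{sec:ktheory} — pull back $\tau$ via $f_{\mathbb{C}}$, compare with the pullback via the inclusion of the $U$-part using the commutative square of pairs, and equate multiples of the Thom class of $U \oplus U$ — yields Equation (\ref{equ:gammak}) with $\gamma = tr$-preimage living in $K^0_{Pin(2)}(B)$ (after stabilising so that $V, U$ are trivial, which makes $e^K_{Sp(1)}(V_{\mathbb{C}})$ and the corresponding class for $U$ tractable).

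Next I would compute the three equivariant Euler classes appearing in (\ref{equ:gammak}), exactly as set up in the excerpt. Since $j$ acts as $-1$ on $H^+(X)_{\mathbb{C}}$, the class $e^K_{\mathbb{Z}_2}(H^+(X)_{\mathbb{C}}) = \Lambda_{-1}((H^+(X)_{\mathbb{C}})^*)$ splits as $\sum_{i \text{ even}} \wedge^i H^+(X)_{\mathbb{C}} - \sum_{i \text{ odd}} \wedge^i H^+(X)_{\mathbb{C}} \otimes 1_-$, using $H^+(X)_{\mathbb{C}} \cong (H^+(X)_{\mathbb{C}})^*$. For $V_{\mathbb{C}}$, I would use the identification $V_{\mathbb{C}} = V \oplus \overline{V} \cong \mu_1 \otimes_{\mathbb{C}} V$ as a complex bundle with quaternionic $Sp(1)$-action (the eigenspace decomposition of $\hat I$ given in the excerpt), so that $e^K_{Pin(2)}(V_{\mathbb{C}}) = \Lambda_{-1}((\mu_1 \otimes V)^*)$, and similarly for $V'_{\mathbb{C}}$. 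Applying $tr_j$: the evaluation of characters at $j$ kills every $\mu_k$, sends $1_- \mapsto -1$, so $tr_j(e^K_{\mathbb{Z}_2}(H^+(X)_{\mathbb{C}})) = \wedge^* H^+(X)_{\mathbb{C}}$ (the two sums re-combine with a $+$ sign). For the $V$-side, using the splitting principle $V = \oplus V_i$, one gets $tr_j(e^K_{Pin(2)}(V_{\mathbb{C}})) = \prod_i (1 + V_i^{-2}) = \wedge^* \psi^2(V^*) = \wedge^* \psi^2(V)$, the last step using $V \cong V^*$ from the quaternionic structure; the same for $V'$. Substituting these into $tr_j$ of (\ref{equ:gammak}) and setting $\eta = tr_j(\gamma) \in K^0(B)$ gives the claimed identity.

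The main subtlety — and the step I would be most careful about — is twofold. First, one must verify that after the $\overline{f}$-twisting and doubling, the bundles $V \oplus \overline V$, $V' \oplus \overline{V'}$ really do carry $Sp(1)$-actions by \emph{complex-linear} maps that intertwine $f_{\mathbb{C}}$, so that the Euler class computation can be done genuinely inside $K^0_{Pin(2)}$ rather than in a twisted theory; the excerpt handles this by exhibiting $\hat I, \hat J, \hat K$ explicitly and by the eigenspace argument, so this is really a matter of transcribing that bookkeeping correctly. Second, one must make sure the $tr_j$ homomorphism is well-defined on $K^0_{Pin(2)}(B)$ — it is, because $K^0_{Pin(2)}(B)$ is a module over $R[Pin(2)] = K^0_{Pin(2)}(pt)$ (free abelian on $1, 1_-, \mu_j$) and $tr_j$ is induced by the ring homomorphism $R[Pin(2)] \to \mathbb{Z}$ of evaluating characters at the element $j \in Pin(2)$, tensored over $K^0(B)$. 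Everything else is a routine application of the splitting principle and the Thom-isomorphism argument already run twice in the preceding sections, so once (\ref{equ:gammak}) is in hand the theorem follows by pure algebra in $K^0(B)$.
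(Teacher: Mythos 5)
Your proposal matches the paper's argument essentially line for line: derive Equation (\ref{equ:gammak}) from the complexified map $f_{\mathbb{C}} = f \wedge_B \overline{f}$ using the Thom-isomorphism argument, compute the three $Pin(2)$-equivariant $K$-theoretic Euler classes via the $\mu_1\otimes V$ decomposition and the splitting principle, and then push everything to $K^0(B)$ by the character-at-$j$ homomorphism $tr_j$, which kills the $\mu_k$ and sends $1_-\mapsto -1$, yielding $\wedge^* H^+(X)_\mathbb{C}$ and $\wedge^*\psi^2(V)$ exactly as needed. You have also correctly flagged the two delicate points (the complex-linear $Sp(1)$-action on $V\oplus\overline V$, and the well-definedness of $tr_j$), both of which the paper handles the same way.
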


Theorem \ref{thm:10on8} can be improved by a factor of $2$ if the $K$-theoretic Euler class of $H^+(X)_\mathbb{C}$ vanishes and we are willing to sacrifice torsion:
\begin{theorem}\label{thm:10on8plus1}
Let $(E , \mathfrak{s}_{E/B})$ be a spin family over $B$ with fibre $(X , \mathfrak{s})$. Assume $e^K(H^+(X)_{\mathbb{C}})=0$. Then in $K^0(B)/torsion$ we have:
\[
\wedge^* H^+(X)_{\mathbb{C}} \otimes \wedge^* \psi^2(V') =  2 \eta  (\wedge^* \psi^2(V))
\]
for some $\eta \in K^0(B)$.
\end{theorem}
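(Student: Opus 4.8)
The strategy is to extract a $2$-divisibility statement from the $Pin(2)$-equivariant $K$-theory relation (\ref{equ:gammak}) by comparing the homomorphism $tr_j$ already used in the proof of Theorem \ref{thm:10on8} with a second homomorphism coming from the subgroup $S^1 \subset Pin(2)$. Recall first that applying $tr_j$ to (\ref{equ:gammak}) is exactly what produced the identity
\[
\wedge^* H^+(X)_{\mathbb{C}} \otimes \wedge^* \psi^2(V') = tr_j(\gamma) \cdot \wedge^* \psi^2(V)
\]
in $K^0(B)$; so Theorem \ref{thm:10on8} holds with $\eta = tr_j(\gamma)$, and it suffices to show that the hypothesis $e^K(H^+(X)_{\mathbb{C}})=0$ forces $tr_j(\gamma)$ to be divisible by $2$ (modulo torsion).

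The crucial observation is that $e^K(H^+(X)_{\mathbb{C}})=0$ forces $\gamma$ to restrict to zero in $S^1$-equivariant $K$-theory. Restrict (\ref{equ:gammak}) along $S^1 \hookrightarrow Pin(2)$. The composite $S^1 \hookrightarrow Pin(2) \to \mathbb{Z}_2$ is constant, so $e^K_{\mathbb{Z}_2}(H^+(X)_{\mathbb{C}})$ restricts to $\sum_i (-1)^i \wedge^i H^+(X)_{\mathbb{C}} = e^K(H^+(X)_{\mathbb{C}})$, which vanishes by hypothesis. Hence the left side of (\ref{equ:gammak}) restricts to zero, giving $\gamma|_{S^1}\cdot e^K_{Sp(1)}(V_{\mathbb{C}})|_{S^1}=0$ in $K^0_{S^1}(B)=K^0(B)[\xi,\xi^{-1}]$. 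Since $V_{\mathbb{C}} = \mu_1 \otimes V$, the restriction $e^K_{Sp(1)}(V_{\mathbb{C}})|_{S^1}=\Lambda_{-1}(\xi V^* \oplus \xi^{-1}V^*)$ is, after multiplication by a suitable power of $\xi$, a monic polynomial in $\xi$ with unit leading coefficient (via the splitting principle), hence a non-zero-divisor in $K^0(B)[\xi,\xi^{-1}]$; therefore $\gamma|_{S^1}=0$.

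I would then compare $tr_j$ with the homomorphism $ev_{-1}\colon K^0_{Pin(2)}(B)\to K^0(B)$ given by restriction to $S^1$ followed by evaluation at $\xi=-1$. Since $Pin(2)$ acts trivially on $B$ we may write $K^0_{Pin(2)}(B)\cong K^0(B)\otimes_{\mathbb{Z}} R[Pin(2)]$; on the $\mathbb{Z}$-basis $1,1_-,\mu_1,\mu_2,\dots$ of $R[Pin(2)]$ the homomorphism $tr_j$ takes the values $1,-1,0,0,\dots$ while $ev_{-1}$ takes the values $1,1,-2,2,\dots$ (in general $ev_{-1}(\mu_k)=2(-1)^k$). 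Consequently $tr_j+ev_{-1}$ takes values in $2\mathbb{Z}$ on every basis element, so $(tr_j+ev_{-1})(\gamma)\in 2K^0(B)$ for all $\gamma$. Because $ev_{-1}(\gamma)=0$ by the previous step, this yields $tr_j(\gamma)=(tr_j+ev_{-1})(\gamma)\in 2K^0(B)$. Writing $tr_j(\gamma)=2\eta$ and substituting into the displayed identity finishes the proof; in fact this gives an honestly integral statement, so the passage to $K^0(B)/torsion$ is only a convenience.

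The main obstacle, just as in the non-families $10/8$ theorem, is the verification that $\gamma$ restricts to zero in $K^0_{S^1}(B)$: one must identify $e^K_{Sp(1)}(V_{\mathbb{C}})|_{S^1}$ precisely enough to recognise it as a non-zero-divisor, and one must use the full non-equivariant vanishing $e^K(H^+(X)_{\mathbb{C}})=0$ rather than merely its $\mathbb{Z}_2$-equivariant refinement, since it is exactly this that kills the left-hand side of (\ref{equ:gammak}) after restriction to $S^1$. The remaining input is the elementary bookkeeping in $R[Pin(2)]$ that converts the vanishing of $ev_{-1}(\gamma)$ into the extra factor of $2$.
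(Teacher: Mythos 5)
Your argument is correct, and it in fact diverges from the paper's proof in a way that is worth noting. The paper establishes that $\gamma$ restricts trivially to $S^1$ by applying the character evaluation $tr_\xi$ for every $\xi \in S^1$ with complex coefficients and then invoking the finite dimensionality of $K^0(B)_{\mathbb{C}}$ over $\mathbb{C}$ to rule out a nonzero polynomial identity; this necessarily passes through $K^0(B)_{\mathbb{C}}$, which is why the theorem is stated in $K^0(B)/\mathrm{torsion}$. You instead restrict (\ref{equ:gammak}) to $K^0_{S^1}(B) = K^0(B)[\xi,\xi^{-1}]$ directly and observe that $\xi^a\, e^K_{Sp(1)}(V_{\mathbb{C}})|_{S^1}$ is a polynomial in $\xi$ of degree $2a$ whose leading coefficient $(-1)^a \wedge^a V^* = (-1)^a\det(V)^{-1}$ is a unit, hence a non-zero-divisor; this forces $\gamma|_{S^1}=0$ integrally without any complexification. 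Your subsequent $ev_{-1}$ bookkeeping (that $tr_j + ev_{-1}$ takes values in $2\mathbb{Z}$ on the basis $1, 1_-, \mu_k$ of $R(Pin(2))$) is an equivalent repackaging of the paper's step ``$\gamma|_{S^1}=0$ implies $\gamma = \eta(1-1_-)$'': from $\gamma|_{S^1}=0$ one reads off $\gamma_0 = \widetilde{\gamma}_0$ and $\gamma_k = 0$ for $k\ge 1$ directly from the restriction $R(Pin(2))\to R(S^1)$, and then $tr_j(\gamma) = 2\gamma_0$. The upshot is that your argument proves the identity in $K^0(B)$ itself, not merely in $K^0(B)/\mathrm{torsion}$, so it is a genuine (if modest) strengthening of the paper's statement, obtained by replacing a complex-analytic finite-dimensionality argument with an integral non-zero-divisor argument.
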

\begin{proof}
Our proof is similar to arguments used in \cite{bry}. Starting from Equation (\ref{equ:gammak}), we have
\begin{equation}\label{equ:gammak2}
\left( \sum_{i \; even} \wedge^i H^+(X)_{\mathbb{C}} - \sum_{i \; odd} \wedge^i H^+(X)_{\mathbb{C}} \otimes 1_- \right) e^K( \mu_1 \otimes V') = \gamma e^K(\mu_1 \otimes V)
\end{equation}
for some $\gamma \in K^0_{Pin(2)}(B)$. Let $K^0(B)_\mathbb{C} = K^0(B) \otimes_{\mathbb{Z}} \mathbb{C}$. Let $\xi \in S^1$ and consider the homomorphism $tr_\xi : K^0_{Pin(2)}(B) \to K^0(B)_{\mathbb{C}}$ given by evaluating the character of $Pin(2)$ representations at $\xi$. We have $tr_\xi(\mu_k) = \xi^k + \xi^{-k}$ and $tr_\xi(1) = tr_\xi(1_-) = 1$. Using the splitting principle to write $V = \bigoplus_{i=1}^a V_i$, we find
\[
tr_\xi( e^K(\mu_1 \otimes V) ) = \prod_{i=1}^{a} ( 1 -(\xi+\xi^{-1})V_i^{-1} + V_i^{-2}).
\]
Thus if we apply $tr_\xi$ to (\ref{equ:gammak2}), we get:
\begin{equation}\label{equ:gammak3}
0 = e^K(H^+(X)_\mathbb{C}) = tr_\xi(\gamma) \prod_{i=1}^{a} ( 1 -(\xi+\xi^{-1})V_i^{-1} + V_i^{-2}) \in K^0(B)_{\mathbb{C}}.
\end{equation}
We claim this implies that $tr_\xi(\gamma) = 0$ for all $\xi \in S^1$. To see this we first note that
\[
\prod_{i=1}^{a} ( 1 -(\xi+\xi^{-1})V_i^{-1} + V_i^{-2}) = (-1)^a det(V)^{-1} \xi^{-a} \prod_{i=1}^a (\xi - V_i^{-1})(\xi - V_i)
\]
and $\prod_{i=1}^{a}(\xi-V_i^{-1})(\xi - V_i)$ can be written as a monic degree $2a$ polynomial in $\xi$ with coefficients in $K^0(B)_{\mathbb{C}}$:
\[
\prod_{i=1}^{a}(\xi-V_i^{-1})(\xi - V_i) = \xi^{2a} + c_1 \xi^{2a-1} + \cdots + c_{2a}, \quad c_1, \dots , c_{2a} \in K^0(B)_{\mathbb{C}}.
\]
Further, we may write $\gamma$ as $\gamma = \gamma_0 + \widetilde{\gamma_0} 1_- + \sum_{i \ge 1} \gamma_i \mu_i$ for some $\gamma_0, \widetilde{\gamma}_0 , \gamma_i \in K^0(B)$, where only finitely many of the $\gamma_i$ are non-zero. Then
\[
tr_\xi(\gamma) = (\gamma_0 - \widetilde{\gamma}_0) + \sum_{i \ge 1} \gamma_i( \xi^i + \xi^{-i}).
\]
Since only finitely many of the $\gamma_i$ are non-zero, there exists an $m \ge 0$ such that $y(\xi) = \xi^m tr_\xi(\gamma)$ is a polynomial in $\xi$ with coefficients in $K^0(B)_{\mathbb{C}}$. Suppose that $tr_\xi(\gamma) \neq 0$. Then $y(\xi)$ is a non-zero polynomial. Let $r \ge 0$ be the degree of $y(\xi)$, so $y(\xi) = y_r \xi^r + \cdots + y_0$ for some $y_0, \dots , y_r \in K^0(B)_\mathbb{C}$ with $y_r \neq 0$. From (\ref{equ:gammak3}) we get that $y(\xi)( \xi^{2a} + \cdots + c_{2a}) = 0$ for all $\xi \in S^1$. However if $B$ is a compact finite dimensional manifold, then $K^0(B)_{\mathbb{C}}$ is finite dimensional over $\mathbb{C}$ and hence the polynomial $y(\xi)( \xi^{2a} + \cdots + c_{2a})$ vanishes for all $\xi \in \mathbb{C}$. But if $y(\xi)(\xi^{2a} + \cdots + c_{2a}) = y_r \xi^{2a+r} + \cdots + y_0 c_{2a}$ vanishes for all $\xi$ then (using finite dimensionality of $K^0(B)_\mathbb{C}$) it follows that $y_r = 0$, a contradiction.

It follows that $tr_\xi(\gamma) = 0$ for all $\xi \in S^1$. This can only happen if $\gamma_0 = \widetilde{\gamma_0}$ and $\gamma_i = 0$ for all $i \ge 1$. Hence $\gamma = \eta(1 - 1_-)$, where $\eta = \gamma_0 \in K^0(B)$. Equation (\ref{equ:gammak2}) now becomes
\[
\left( \sum_{i \; even} \wedge^i H^+(X)_{\mathbb{C}} - \sum_{i \; odd} \wedge^i H^+(X)_{\mathbb{C}} \otimes 1_- \right) e^K( \mu_1 \otimes V') = \eta(1-1_-) e^K(\mu_1 \otimes V)
\]
for some $\eta \in K^0(B)$. Applying $tr_j$ and arguing as in the proof of Theorem \ref{thm:10on8} we now obtain:
\[
\wedge^* H^+(X)_{\mathbb{C}} \otimes \wedge^* \psi^2(V') =  2 \eta  (\wedge^* \psi^2(V))
\]
in $K^0(B)_{\mathbb{C}}$. This implies the result since the image of $K^0(B)$ under the map $K^0(B) \to K^0(B)_{\mathbb{C}}$ can be identified with $K^0(B)/torsion$.
\end{proof}

\begin{remark}
If we take $B = \{pt\}$ to be a point and assume $b_+(X)>0$, then Theorem \ref{thm:10on8plus1} reduces to the statement that $2^{b_+(X)+2a'} = \eta 2^{1+2a}$ for some $\eta \in \mathbb{Z}$ (recall that in this section we take $V,V'$ to have ranks $2a,2a'$). Therefore $b_+(X)+2a' \ge 2a+1$, or $b_+(X) \ge d+1$, which is Furuta's $10/8$ inequality.
\end{remark}

\section{$G$-equivariant monopole map}\label{sec:equivariant}

In this section we consider an equivariant monopole map with respect to a finite group $G$ acting on $X$ by orientation preserving diffeomorphisms. We assume that $G$ preserves the isomorphism class of a spin$^c$-structure $\mathfrak{s}$ but does not necessarily lift to a $G$-action on the spinor bundles. The $G$-equivariant Bauer-Furuta invariant was constructed in \cite{szy2} and we refer the reader to \cite{szy2} for the details of the construction.\\

To construct a finite dimensional approximation of the families Seiberg-Witten monopole map one needs to choose a metric and reference spin$^c$-connection. By averaging, we can assume that the metric has been chosen $G$-invariantly. Lifting the action of $G$ to the spinor bundles, we obtain a central extension
\[
1 \to S^1 \to \widehat{G} \to G \to 1
\]
where $S^1$ acts as constant gauge transformations. Note that this is a split extension if and only if the $G$-action can be lifted to a $G$-action on the spinor bundles. In such a case $\widehat{G} \cong S^1 \times G$ and we say that the $G$-action is {\em liftable}.\\

By averaging over $\widehat{G}$, we can assume a reference spin$^c$-connection has been chosen $\widehat{G}$-invariantly. It then follows that the finite dimensional approximation of the Seiberg-Witten monopole map can be constructed $\widehat{G}$-equivariantly. Therefore we obtain a $\widehat{G}$-equivariant map:
\[
f : S_{V , U} \to S_{V' , U'},
\]
where $V,V'$ are complex representations of $\widehat{G}$ and $U,U' $ are real $\widehat{G}$-representations. Moreover the $S^1$ subgroup of $\widehat{G}$ acts in the usual way, namely as scalar multiplication in the fibres of $V,V'$ and trivially on $U,U'$. The following relations hold in $K^0_{\widehat{G}}(pt)$ and $KO^0_{\widehat{G}}(pt)$ respectively:
\[
V - V' = D, \quad \quad U' - U = H^+(X),
\]
where $D \in K^0_{\widehat{G}}(pt)$ is the $\widehat{G}$-equivariant index of $(X , \mathfrak{s})$ and $H^+(X)$ is the space of harmonic self-dual $2$-forms on $X$.\\

The results of the previous sections can be enhanced to the $\widehat{G}$-equivariant setting (and setting $B = \{pt\}$). We will summarise these results below. The analogue of Equation (\ref{equ:euler1}) is:
\begin{equation}\label{equ:Geuler1}
e_{\widehat{G}}(V') e_{\widehat{G}}(H^+(X)) = \beta e_{\widehat{G}}(V)
\end{equation}
for some $\beta \in H^{b_+(X)-2d}_{\widehat{G}}(pt ; \mathbb{Z}_w)$, where $w$ is the equivariant first Stiefel-Whitney class of $H^+(X)$. Here we are using that $H^*_{\widehat{G}}(pt ; \mathbb{Z}_w) \cong H^*_{\widehat{G}}( B\hat{G} ; \mathbb{Z}_w)$ and we obtain the formula by considering the Borel family $X \times_{\hat{G}} E\hat{G}$ over $B\hat{G}$.

\begin{theorem}\label{thm:Geuler}
Let $G$ act smoothly on $X$ preserving the isomorphism class of $\mathfrak{s}$. Suppose that the $G$-action is liftable. Then if $e_G(H^+(X)) \neq 0$, we have $d \le 0$. Moreover $e_G(H^+(X))s_{j,G}(D) = 0$ whenever $j > -d$, where $s_{j,G}(D)$ denotes the $j$-th equivariant Segre class of $G$.
\end{theorem}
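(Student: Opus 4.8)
The plan is to carry out the argument of Section~\ref{sec:cohomology} at $B=\{pt\}$, working with $\widehat G$-equivariant cohomology. Since the $G$-action is liftable we have $\widehat G\cong S^1\times G$, so $H^*_{\widehat G}(pt;\mathbb Z)\cong H^*(BG;\mathbb Z)[x]$ with $x$ the degree-$2$ parameter of the $S^1$-factor (the K\"unneth theorem applies since $H^*(BS^1;\mathbb Z)=\mathbb Z[x]$ is free). Equation~(\ref{equ:Geuler1}), which I may take as given, then holds with $\beta\in H^{b_+(X)-2d}_{\widehat G}(pt;\mathbb Z_w)=\bigoplus_{k\ge 0}H^{b_+(X)-2d-2k}(BG;\mathbb Z_w)\,x^k$; here I have used that $e_{\widehat G}(H^+(X))$ carries no power of $x$ and is simply the pullback of $e_G(H^+(X))$ (the $S^1$-factor acts trivially on $H^+(X)$), and that $\mathbb Z_w$ is pulled back from the local system on $BG$ attached to $w=w_{1,G}(H^+(X))$. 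As the cohomology of $BG$ vanishes in negative degrees, this direct sum is finite, so $\beta=\sum_k\beta_k x^k$ is a genuine polynomial in $x$; write $\beta_m$ for its top nonzero coefficient.

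Next I would compute the complex Euler classes. Because $S^1$ acts on $V$ by scalar multiplication, the splitting principle (pulling back to the flag bundle of $V$ over $BG$) gives the \emph{monic} polynomial
\[
e_{\widehat G}(V)=x^a+c_1^G(V)\,x^{a-1}+\cdots+c_a^G(V),
\]
where $c_i^G$ denotes the $i$-th $G$-equivariant Chern class, and likewise $e_{\widehat G}(V')$ is monic of degree $a'$. The inequality $d\le 0$ now comes out just as in the proof of Theorem~\ref{thm:euler}: assuming $e_G(H^+(X))\ne 0$, the left-hand side of~(\ref{equ:Geuler1}) has top $x$-degree $a'$ with coefficient $e_G(H^+(X))\ne 0$, whereas the right-hand side, being $\beta$ times the monic degree-$a$ polynomial $e_{\widehat G}(V)$, has top $x$-degree $m+a$ with coefficient $\beta_m\ne 0$; equating these forces $a'=m+a$, i.e.\ $-d=m\ge 0$.

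For the Segre-class assertion I would turn~(\ref{equ:Geuler1}) into a relation for $e_G(H^+(X))$ alone. Since $e_{\widehat G}(V)$ is monic in $x$, it becomes invertible after passing to formal Laurent series in $x^{-1}$ over $H^*(BG;\mathbb Z)$ (and over the corresponding $\mathbb Z_w$-coefficient module on the side carrying $e_G(H^+(X))$). From $D=V-V'$ and multiplicativity of the total Chern class, the splitting principle yields the identity
\[
e_{\widehat G}(V')\,e_{\widehat G}(V)^{-1}=\sum_{j\ge 0}s_{j,G}(D)\,x^{-d-j},\qquad s_{j,G}(D):=c_{j,G}(-D),
\]
the power shift being $x^{a'-a}=x^{-d}$ and the $j$-th Laurent coefficient being $c_{j,G}(V'-V)=c_{j,G}(-D)$. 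Multiplying~(\ref{equ:Geuler1}) through by $e_{\widehat G}(V)^{-1}$ now gives
\[
e_G(H^+(X))\sum_{j\ge 0}s_{j,G}(D)\,x^{-d-j}=\beta,
\]
and since $\beta$ contains no negative powers of $x$ while $-d=m\ge 0$, comparing the coefficient of $x^{-d-j}$ for each $j>-d$ forces $e_G(H^+(X))\,s_{j,G}(D)=0$, which is the second claim.

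I expect the only point requiring care beyond a near-verbatim copy of Section~\ref{sec:cohomology} to be that, unlike a family over a manifold, one cannot stabilise the $\widehat G$-representation $V$ to be trivial---a nontrivial $G$-representation is never stably trivial---so $e_{\widehat G}(V)$ must be carried along as a general monic polynomial rather than as $x^a$. Accordingly the step ``$c_j(V')=s_j(D)$, then compare against the multiple $\beta x^a$'' used in Theorem~\ref{thm:euler} is replaced here by inverting (equivalently, cancelling) the monic non-zero-divisor $e_{\widehat G}(V)$; this, together with the routine treatment of local coefficients when $H^+(X)$ is not equivariantly orientable, is where the slight extra work lies.
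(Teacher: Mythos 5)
Your proof is correct and follows the same route the paper intends — adapting the argument of Theorem~\ref{thm:euler} in $\widehat G$-equivariant cohomology using $\widehat G\cong S^1\times G$ and $H^*_{\widehat G}(pt;\mathbb Z_w)\cong H^*_G(pt;\mathbb Z_w)[x]$. You have moreover correctly isolated and resolved the one non-trivial point the paper's one-line proof leaves implicit: since $V$ cannot be stabilised to a trivial $\widehat G$-representation, $e_{\widehat G}(V)$ is only a monic polynomial in $x$ rather than $x^a$, and the conclusion is reached by cancelling this monic non-zero-divisor (equivalently, inverting it as a formal Laurent series in $x^{-1}$) rather than by comparing against a multiple of $x^a$ as in Theorem~\ref{thm:euler}.
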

\begin{proof}
First note that since $G$ is liftable, we have $\widehat{G} = S^1 \times G$ and $H^*_{\widehat{G}}(pt ; \mathbb{Z}_w) \cong H^*_{G}(pt ; \mathbb{Z}_w)[x]$, where $x$ is the generator of $H^2_{S^1}(pt ; \mathbb{Z})$. Bearing this in mind, the proof of Theorem \ref{thm:euler} is easily seen to adapt to the $G$-equivariant setting.
\end{proof}

Turning to $K$-theory, let us first assume that $H^+(X)$ can be given a $G$-equivariant spin$^c$-structure and fix such a choice. The analogue of Equation (\ref{equ:eulerk1}) is:
\begin{equation}\label{equ:Geulerk1}
e^K_{\widehat{G}}(V') e^K_{\widehat{G}}(H^+(X)) = \alpha e^K_{\widehat{G}}(V),
\end{equation}
for some $\alpha \in K^{b_+(X)}_{\widehat{G}}(pt)$. Without assuming a $G$-equivariant spin$^c$-structure on $H^+(X)$ we can replace $f$ by its ``complexification" $f \wedge \overline{f}$, and obtain:
\begin{equation}\label{equ:Geulerk2}
e^K_{\widehat{G}}(V'_{\mathbb{C}}) e^K_{\widehat{G}}(H^+(X)_{\mathbb{C}}) = \gamma e^K_{\widehat{G}}(V_{\mathbb{C}} ),
\end{equation}
for some $\gamma \in K^0_{\widehat{G}}(pt)$.

\begin{theorem}
Let $G$ act smoothly on $X$ preserving the isomorphism class of $\mathfrak{s}$. Suppose that the $G$-action is liftable. Then if $e_G^K(H^+(X)_{\mathbb{C}}) \neq 0$, we have $d \le 0$.
\end{theorem}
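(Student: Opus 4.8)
The plan is to mimic the proof of Theorem \ref{thm:eulerk}, but in the $\widehat{G}$-equivariant $K$-theory setting, using the doubled (complexified) monopole map so that $H^+(X)_{\mathbb{C}}$ carries a $G$-equivariant spin$^c$-structure automatically and Equation (\ref{equ:Geulerk2}) is available. Since the $G$-action is liftable we have $\widehat{G} = S^1 \times G$, so $K^0_{\widehat{G}}(pt) \cong R(G)[\xi,\xi^{-1}]$ and $K^0_{\widehat{G}}(pt)_{\mathbb{C}} = K^0_G(pt)_{\mathbb{C}}[\xi,\xi^{-1}]$, where as before $\xi$ records the $S^1$-weight. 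First I would recall that $V \in K^0_G(pt)$ becomes $V \otimes \xi$ once we remember the $S^1$-scalar action, so the $S^1 \times G$-equivariant $K$-theoretic Euler class of $V_{\mathbb{C}} = V \oplus \overline{V}$ is, after stabilising so that $V = \mathbb{C}^a$ is the trivial rank-$a$ representation of $\widehat{G}$,
\[
e^K_{\widehat{G}}(V_{\mathbb{C}}) = (1 - \xi^{-1})^a (1-\xi)^a = \pm \xi^{-a}(1-\xi)^{2a}.
\]
Substituting this into Equation (\ref{equ:Geulerk2}) gives
\[
e^K_G(H^+(X)_{\mathbb{C}}) \, e^K_{\widehat{G}}(V'_{\mathbb{C}}) = \gamma \, (1-\xi^{-1})^a (1-\xi)^a
\]
for some $\gamma \in K^0_{\widehat{G}}(pt)$, which is the exact $\widehat{G}$-analogue of Equation (\ref{equ:eulerk3}).

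Next I would run the degree-in-$\xi$ argument of Theorem \ref{thm:eulerk} essentially verbatim, working over $\mathbb{C}$-coefficients, i.e. in $K^0_G(pt)_{\mathbb{C}}[\xi,\xi^{-1}]$, so that "leading/lowest power of $\xi$" makes sense (the point is that $K^0_G(pt)_{\mathbb{C}}$ is a finite-dimensional $\mathbb{C}$-algebra with no issue extracting a nonzero top coefficient). Write $\gamma = \sum_{k=n}^m \gamma_k \xi^k$ with $\gamma_m, \gamma_n \neq 0$ in $K^0_G(pt)_{\mathbb{C}}$; note $e^K_{\widehat{G}}(V'_{\mathbb{C}})$ has top $\xi$-degree $0$ and bottom $\xi$-degree $-2a'$ (again up to an invertible monomial and an invertible top exterior power, as in the proof of Theorem \ref{thm:eulerk}), while $(1-\xi^{-1})^a(1-\xi)^a$ has top degree $a$ and bottom degree $-a$. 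Assuming $e^K_G(H^+(X)_{\mathbb{C}}) \neq 0$, it stays nonzero after multiplication by the invertible class $\wedge^{2a'}V'^*_{\mathbb{C}}$, so comparing the highest and lowest powers of $\xi$ on the two sides forces $m = a$ and $-2a' = n - a$, hence $-2d = 2(a'-a) = -(m+n)$... more carefully: top degrees give $0 = m + a$ so $m = -a \le 0$, wait — I must be careful with signs of the monomial $\xi^{-a}$; I would instead clear the $\xi^{-a}$ on the right, writing $\xi^a e^K_G(H^+(X)_{\mathbb{C}}) e^K_{\widehat{G}}(V'_{\mathbb{C}}) = \pm\gamma(1-\xi)^{2a}$, and then top degrees give $a + 0 = m + 2a$, i.e. $m = -a$, and bottom degrees give $a - 2a' = n$, so $n - m = a - 2a' - (-a) = 2a - 2a' = 2d$; since $n \le m$ this yields $2d \le 0$, i.e. $d \le 0$.

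I expect the only genuine subtlety — and the step I would flag as the main obstacle — to be making the "compare highest and lowest powers of $\xi$" manipulation rigorous in $K^0_G(pt)_{\mathbb{C}}$, which is not a polynomial ring over a field: one needs to know that multiplying the nonzero top/bottom coefficients of $e^K_G(H^+(X)_{\mathbb{C}})$ (times the invertible line-bundle classes $\wedge^{2a'}V'^*_{\mathbb{C}}$ and the invertible monomials) stays nonzero, so that the extremal $\xi$-powers on each side really are what I claimed. This is handled exactly as in Theorem \ref{thm:eulerk}: $\wedge^{2a'}V'^*_{\mathbb{C}}$ is an invertible element of $K^0_G(pt)$, so it does not kill the nonzero class $e^K_G(H^+(X)_{\mathbb{C}})$, and the monomials $\xi^{\pm k}$ are units in $K^0_{\widehat{G}}(pt)[\xi,\xi^{-1}]$; everything else is bookkeeping. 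So the proof is just: substitute the explicit $\widehat{G}$-equivariant Euler classes into Equation (\ref{equ:Geulerk2}), pass to $\mathbb{C}$-coefficients, and repeat the degree count of Theorem \ref{thm:eulerk}.
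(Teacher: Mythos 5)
Your proposal is correct and follows exactly the route the paper intends: start from Equation (\ref{equ:Geulerk2}), use liftability to write $K^0_{\widehat{G}}(pt) \cong K^0_G(pt)[\xi,\xi^{-1}]$, and then repeat the highest/lowest $\xi$-power comparison from the proof of Theorem \ref{thm:eulerk}. The paper's own proof is a one-line reference to that adaptation, and you have filled in the bookkeeping. Two small remarks: (i) passing to $\mathbb{C}$-coefficients is unnecessary here --- the only input needed for the extremal-degree comparison is that $\wedge^{2a'}V'^*_{\mathbb{C}}$ is an invertible element of $R(G)$, which is already true over $\mathbb{Z}$; the finite-dimensionality/evaluation-at-$\xi\in S^1$ concern you flag is relevant to Theorem \ref{thm:10on8plus1}, not to this argument; and (ii) your stated $\xi$-degree range $[-2a',0]$ for $e^K_{\widehat{G}}(V'_{\mathbb{C}})$ is the one obtained from the $V'\oplus V'$ doubling of Section \ref{sec:ktheory}, whereas with the complexification $V'\oplus\overline{V'}$ of Section \ref{sec:equivariant} the natural range is $[-a',a']$; the two differ by a unit monomial so $n-m$ and hence the conclusion $d\le 0$ are unchanged, but it is worth keeping the conventions straight.
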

\begin{proof}
Since $G$ is liftable, we have $K^0_{\widehat{G}}(pt) \cong K^0_G(pt) \otimes K^0_{S^1}(pt) = K^0_G(pt)[\xi , \xi^{-1}]$. Then it is fairly straightforward to adapt the proof of Theorem \ref{thm:eulerk} to the $G$-equivariant setting. 
\end{proof}

Now we consider the case where $X$ is given a spin structure whose isomorphism class is preserved by $G$. In this case the group of lifts of $G$ to automorphisms of the spin principal bundle defines a central extension:
\[
0 \to \mathbb{Z}_2 \to \widetilde{G} \to G \to 1.
\]
This is a split extension if and only if the $G$-action can be lifted to a $G$-action on the associated principal $Spin(4)$-bundle. In such a case $\widetilde{G} \cong \mathbb{Z}_2 \times G$ and we say that the $G$-action is {\em spin-liftable}. Next, we define $Pin^G(2) = \widetilde{G} \times_{\mathbb{Z}_2} Pin(2)$, where $\mathbb{Z}_2$ is taken as a subgroup of $Pin(2)$ via $\mathbb{Z}_2 \subset S^1 \subset Pin(2)$. This group acts on the spinor bundles in the obvious way. Note also that if $G$ is spin-liftable then $Pin^G(2) \cong Pin(2) \times G$.\\

Taking as usual a finite dimensional approximation of the monopole map, we obtain a $Pin^G(2)$-equivariant map
\[
f : S_{V , U} \to S_{V' , U'},
\]
where $V,V'$ are quaternionic representations of ranks $2a,2a'$, $U,U'$ are real representations of ranks $b,b'$, $Pin(2)$ acts on $V,V'$ through the quaternionic structures, $j$ acts on $U,U'$ as multiplication by $-1$ and $S^1$ acts trivially on $U,U'$. Arguing as in Section \ref{sec:spin}, we have
\[
e^K_{Pin^G(2)}(H^+(X)_{\mathbb{C}}) e^K_{Pin^G(2)}(V'_{\mathbb{C}}) = \gamma e^K_{Pin^G(2)}(V_{\mathbb{C}})
\]
for some $\gamma \in K^0_{Pin^G(2)}(pt)$. Adapting the proof of Theorem \ref{thm:10on8}, we obtain:

\begin{theorem}\label{thm:10on8G}
Let $G$ act smoothly on $X$ preserving the isomorphism class of a spin structure $\mathfrak{s}$. Suppose that the $G$-action is spin-liftable. Then:
\[
\wedge^* H^+(X)_{\mathbb{C}} \otimes \wedge^* \psi^2(V') =  \eta  (\wedge^* \psi^2(V)) \in K^0_G(pt)
\]
for some $\eta \in K^0_G(pt)$.
\end{theorem}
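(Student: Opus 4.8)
The plan is to mimic the proof of Theorem \ref{thm:10on8} verbatim, replacing the group $Pin(2)$ everywhere by $Pin^G(2) = \widetilde{G} \times_{\mathbb{Z}_2} Pin(2)$ and keeping track of the extra $G$-equivariance, then at the end apply a suitable evaluation homomorphism to land in $K^0_G(pt)$. First I would take the $Pin^G(2)$-equivariant monopole map $f \colon S_{V,U} \to S_{V',U'}$ and, exactly as in Section \ref{sec:spin}, form the ``complexified'' map $f_{\mathbb{C}} = f \wedge \overline{f}$ so that $V_{\mathbb{C}} = V \oplus \overline{V}$ and $V'_{\mathbb{C}} = V' \oplus \overline{V'}$ carry honest complex structures (hence $K$-orientations), while the quaternionic structures on $V,V'$ upgrade to $Sp(1)$-actions by complex-linear isomorphisms; since $Pin(2) = \{e^{\hat I \theta}\} \cup \{\hat J e^{\hat I \theta}\}$ acts complex-linearly, so does $Pin^G(2)$, and $H^+(X)_{\mathbb{C}}$ becomes $Pin^G(2)$-equivariantly $K$-oriented with $j$ acting by $-1$. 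The Thom-isomorphism argument of Sections \ref{sec:cohomology}--\ref{sec:ktheory} then yields, just as stated in the excerpt,
\[
e^K_{Pin^G(2)}(H^+(X)_{\mathbb{C}})\, e^K_{Pin^G(2)}(V'_{\mathbb{C}}) = \gamma\, e^K_{Pin^G(2)}(V_{\mathbb{C}})
\]
for some $\gamma \in K^0_{Pin^G(2)}(pt)$.

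Next I would reduce the $Pin(2)$-part of these Euler classes to Adams operations exactly as in the proof of Theorem \ref{thm:10on8}. The key computations carry over without change because they take place inside the $Pin(2)$-direction: writing $W_{\mathbb{C}} = \mu_1 \otimes_{\mathbb{C}} W_0$ for a bundle with complex-linear quaternion action, one gets $e^K(V_{\mathbb{C}}) = \sum_i (-1)^i \wedge^i(\mu_1 \otimes V)$, and after applying the splitting principle the factors are $1 - \mu_1 \otimes V_i^{-1} + V_i^{-2}$. These identities now live in $K^0_{Pin^G(2)}(pt)$, but the $G$-action only enters through the bundles $V_i$ (which are $G$-equivariant line bundles after splitting) and never interferes with the $\mu_1$-bookkeeping. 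Similarly $e^K_{\mathbb{Z}_2}(H^+(X)_{\mathbb{C}}) = \sum_{i\ \mathrm{even}} \wedge^i H^+(X)_{\mathbb{C}} - \sum_{i\ \mathrm{odd}} \wedge^i H^+(X)_{\mathbb{C}} \otimes 1_-$ holds $G$-equivariantly.

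The final step is to apply the evaluation-at-$j$ homomorphism. Here is where I expect the only genuine subtlety: $tr_j$ must be reinterpreted $G$-equivariantly as a homomorphism $K^0_{Pin^G(2)}(pt) \to K^0_G(pt)$ rather than $K^0_{Pin(2)}(pt) \to K^0(pt) = \mathbb{Z}$. Concretely, $K^0_{Pin^G(2)}(pt)$ decomposes (after restricting the $Pin(2)$-direction) into $G$-equivariant multiples of the irreducibles $1, 1_-, \mu_k$, and one evaluates the $Pin(2)$-character at $j$ while leaving the $G$-representation data intact: $1 \mapsto 1$, $1_- \mapsto -1$, $\mu_k \mapsto 0$. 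One should check this is well-defined — it is, because $j$ lies in the $Pin(2)$-factor and commutes appropriately, or more cleanly because in the spin-liftable case $Pin^G(2) \cong Pin(2) \times G$ so $K^0_{Pin^G(2)}(pt) \cong K^0_{Pin(2)}(pt) \otimes K^0_G(pt)$ and $tr_j$ is literally $tr_j \otimes \mathrm{id}$. Applying this to the displayed equation gives $tr_j(e^K_{\mathbb{Z}_2}(H^+(X)_{\mathbb{C}})) = \wedge^* H^+(X)_{\mathbb{C}}$, $tr_j(e^K_{Pin^G(2)}(V_{\mathbb{C}})) = \prod_i(1 + V_i^{-2}) = \wedge^*\psi^2(V^*) = \wedge^*\psi^2(V)$ (using $V \cong V^*$ from the quaternionic structure), and likewise for $V'$, yielding
\[
\wedge^* H^+(X)_{\mathbb{C}} \otimes \wedge^* \psi^2(V') = \eta\, (\wedge^* \psi^2(V)) \in K^0_G(pt)
\]
with $\eta = tr_j(\gamma)$. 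The main obstacle is thus not any hard new estimate but rather making sure the evaluation map $tr_j$ is correctly set up in the presence of the central extension $\widetilde{G}$; invoking the spin-liftability hypothesis to split $Pin^G(2) \cong Pin(2) \times G$ dissolves this difficulty, so the proof is genuinely a routine adaptation.
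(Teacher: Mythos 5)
Your proposal is correct and follows essentially the same route the paper takes: the paper's proof of this theorem is literally the instruction to adapt the proof of Theorem \ref{thm:10on8} to $Pin^G(2)$, and your write-up is a faithful, appropriately detailed execution of that adaptation. In particular you correctly identify the one point requiring care — extending $tr_j$ to a homomorphism $K^0_{Pin^G(2)}(pt) \to K^0_G(pt)$ — and you resolve it exactly as the paper's setup intends, by invoking spin-liftability to split $Pin^G(2) \cong Pin(2) \times G$ so that $tr_j$ becomes $tr_j \otimes \mathrm{id}$.
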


As in Section \ref{sec:ktheory} we can improve this result by a factor of $2$ if the $K$-theoretic Euler class of $H^+(X)_\mathbb{C}$ vanishes (here we don't need to sacrifice torsion as $K^0_G(pt) = R(G)$ is torsion free).
\begin{theorem}\label{thm:10on8Gplus1}
Let $G$ act smoothly on $X$ preserving the isomorphism class of a spin structure $\mathfrak{s}$. Suppose that the $G$-action is spin-liftable and that $e^K_G(H^+(X)_\mathbb{C})=0$. Then in $K^0_G(pt)$ we have:
\[
\wedge^* H^+(X)_{\mathbb{C}} \otimes \wedge^* \psi^2(V') =  2 \eta  (\wedge^* \psi^2(V))
\]
for some $\eta \in K^0_G(pt)$.
\end{theorem}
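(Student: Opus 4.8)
The plan is to mirror the proof of Theorem \ref{thm:10on8plus1} almost verbatim, replacing the base space $B$ by a point and the group $Pin(2)$ by $Pin^G(2)$. Starting from the $Pin^G(2)$-equivariant Euler class identity displayed just before the statement, I would expand $e^K_{Pin^G(2)}(H^+(X)_{\mathbb{C}})$ using the fact that $j$ acts as $-1$ on $H^+(X)_{\mathbb{C}}$, giving the familiar alternating sum $\sum_{i\ even}\wedge^i H^+(X)_{\mathbb{C}} - \sum_{i\ odd}\wedge^i H^+(X)_{\mathbb{C}}\otimes 1_-$, and expand $e^K_{Pin^G(2)}(V_{\mathbb{C}})$ via the splitting principle as $\prod_{i=1}^a(1-\mu_1\otimes V_i^{-1}+V_i^{-2})$, now with the $V_i$ being $K$-theory classes on which the extra factor $G$ acts (so $V_i\in R(\widetilde G)$ rather than $R(Pin(2))$). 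This reduces everything to understanding the structure of $\gamma\in K^0_{Pin^G(2)}(pt)$.

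The key step is the analogue of the polynomial-vanishing argument. Since $G$ is spin-liftable, $Pin^G(2)\cong Pin(2)\times G$ and hence $K^0_{Pin^G(2)}(pt)\cong R(Pin(2))\otimes R(G)$, so I can write $\gamma=\gamma_0+\widetilde{\gamma}_0\,1_- + \sum_{i\ge 1}\gamma_i\mu_i$ with $\gamma_0,\widetilde{\gamma}_0,\gamma_i\in R(G)=K^0_G(pt)$, finitely many $\gamma_i$ nonzero. I then apply the evaluation-at-$\xi$ homomorphism $tr_\xi: K^0_{Pin^G(2)}(pt)\to K^0_G(pt)_{\mathbb{C}}=R(G)\otimes_{\mathbb{Z}}\mathbb{C}$ for $\xi\in S^1$, noting $tr_\xi(\mu_k)=\xi^k+\xi^{-k}$ and $tr_\xi(1)=tr_\xi(1_-)=1$. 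Using the hypothesis $e^K_G(H^+(X)_{\mathbb{C}})=0$, the image of the Euler class identity becomes $0=tr_\xi(\gamma)\prod_{i=1}^a(1-(\xi+\xi^{-1})V_i^{-1}+V_i^{-2})$ in $K^0_G(pt)_{\mathbb{C}}$. Rewriting the product as $(-1)^a\det(V)^{-1}\xi^{-a}\prod_{i=1}^a(\xi-V_i^{-1})(\xi-V_i)$, whose second factor is a monic degree-$2a$ polynomial in $\xi$ with coefficients in $K^0_G(pt)_{\mathbb{C}}$, I clear denominators to get $y(\xi)\cdot(\xi^{2a}+c_1\xi^{2a-1}+\cdots+c_{2a})=0$ for all $\xi\in S^1$, where $y(\xi)=\xi^m tr_\xi(\gamma)$ is a polynomial. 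Since $R(G)$ is finitely generated (indeed $K^0_G(pt)_{\mathbb{C}}$ is finite dimensional over $\mathbb{C}$ as $G$ is finite), the polynomial identity holds for all $\xi\in\mathbb{C}$, and comparing top coefficients forces the leading coefficient of $y$ to vanish; hence $tr_\xi(\gamma)=0$ for all $\xi\in S^1$. This is only possible if $\gamma_0=\widetilde{\gamma}_0$ and $\gamma_i=0$ for $i\ge 1$, i.e. $\gamma=\eta(1-1_-)$ with $\eta=\gamma_0\in K^0_G(pt)$.

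Finally I substitute $\gamma=\eta(1-1_-)$ back into the Euler class identity and apply the character-at-$j$ homomorphism $tr_j: K^0_{Pin^G(2)}(pt)\to K^0_G(pt)$, which sends $1\mapsto 1$, $1_-\mapsto -1$, $\mu_k\mapsto 0$, exactly as in the proof of Theorem \ref{thm:10on8}. As there, $tr_j(e^K_{Pin^G(2)}(H^+(X)_{\mathbb{C}}))=\wedge^* H^+(X)_{\mathbb{C}}$, $tr_j(e^K_{Pin^G(2)}(V_{\mathbb{C}}))=\wedge^*\psi^2(V^*)=\wedge^*\psi^2(V)$ (using $V\cong V^*$ from the quaternionic structure), and likewise for $V'$, while $tr_j(\eta(1-1_-))=2\eta$. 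This yields $\wedge^* H^+(X)_{\mathbb{C}}\otimes\wedge^*\psi^2(V')=2\eta(\wedge^*\psi^2(V))$ in $K^0_G(pt)$, as claimed. No torsion needs to be discarded because $K^0_G(pt)=R(G)$ is already torsion-free.

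The main obstacle I anticipate is purely bookkeeping: making sure the splitting-principle manipulations are carried out in the equivariant ring $R(\widetilde G)$ (so that the "line bundles" $V_i$ are virtual classes tensored with characters of $\widetilde G$) and that the isomorphism $K^0_{Pin^G(2)}(pt)\cong R(Pin(2))\otimes R(G)$ really does let one run the polynomial argument coefficientwise over $R(G)$ — in particular that the finite-dimensionality of $K^0_G(pt)_{\mathbb{C}}$ is what makes the "vanishes on $S^1$ implies vanishes on $\mathbb{C}$" step legitimate. None of this is conceptually new relative to Theorems \ref{thm:10on8} and \ref{thm:10on8plus1}; the proof is essentially a transcription with $B$ collapsed to a point and $Pin(2)$ enlarged to $Pin^G(2)$, so I would state it briefly as "adapt the proof of Theorem \ref{thm:10on8plus1}" and only spell out the points where the extra group factor $G$ enters.
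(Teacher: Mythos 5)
Your proposal is correct and follows exactly the route the paper intends: the paper itself gives no separate proof of Theorem~\ref{thm:10on8Gplus1}, merely remarking that one argues ``as in'' the proof of Theorem~\ref{thm:10on8plus1}, with $B$ collapsed to a point, $Pin(2)$ enlarged to $Pin^G(2)\cong Pin(2)\times G$, and the torsion caveat dropped because $R(G)$ is torsion-free. Your transcription of the $tr_\xi$/polynomial-vanishing step (using finite-dimensionality of $R(G)\otimes\mathbb{C}$) and the final $tr_j$ evaluation is exactly the intended argument, so there is nothing genuinely different to compare.
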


\begin{remark}
Let $G$ act smoothly on $X$ preserving the isomorphism class of a spin$^c$-structure $\mathfrak{s}$ and suppose the $G$-action is spin$^c$-liftable. Then we can consider the restriction of the monopole map $f : S_{V,U} \to S_{V',U'}$ to the $G$-fixed point set gives an $S^1$-equivariant map
\[
f^G : S_{V^G,U^G} \to S_{{V'}^G , {U'}^G},
\]
where $V^G - {V'}^G = D^G \in K^0(pt)$, ${U'}^G - U^G = (H^+(X))^G \in KO^0(pt)$. Then the results of Sections \ref{sec:cohomology}-\ref{sec:spin} can be repeated with $f^G$ in place of $f$ and $H^+(X)^G, D^G$ in place of $H^+(X), D$. In this way, we arrive at the orbifold versions of Donaldson's and Furuta's theorems \cite[Theorem 3, Theorem 4]{ff}.
\end{remark}

\section{Case of $b_1(X)>0$}\label{sec:b1>0}

In this section we consider the case of a $4$-manifold $X$ with $b_1(X)>0$. Let $(E , \mathfrak{s}_{E/B})$ be as before a spin$^c$-family over $B$ with fibre $(X , \mathfrak{s})$. In \cite[Example 2.4]{bako} we showed that a finite dimensional approximation of the families monopole map for $(E , \mathfrak{s}_{E/B})$ can be constructed under the condition that the family $E \to B$ admits a section $x : B \to E$. Here we consider a variant of this construction which has the advantage of not requiring a section (it corresponds to pulling back the construction of \cite[Example 2.4]{bako} by the zero section $\zeta : B \to J$ of the Jacobian bundle).\\

Choose a smoothly varying fibrewise metric $g = \{g_b\}_{b\in B}$ on $E$ and a smoothly varying family of $U(1)$-connections $A = \{ A_b \}_{b \in B}$ for the determinant line of the spin$^c$-structure (one way to do this is to choose a globally defined connection on the total space of $E$ and define $A_b$ as the restriction of this connection to the fibre over $b$). Fix an integer $k >2$ and define the following Hilbert bundles over $B$:
\begin{equation*}
\begin{aligned}
&\mathbb{V} = L^2_k( S^+), &&\quad \mathbb{W} = L^2_k(\wedge^1 T^*X)_0, \\
&\mathbb{V}' = L^2_{k-1}(S^-), &&\quad \mathbb{W}' = L^2_{k-1}(\wedge^+ T^*X) \oplus L^2_{k-1}(\mathbb{R})_0.
\end{aligned}
\end{equation*}
In the above definitions $S^\pm$ denotes the spinor bundles, $L^2_k$ denotes the Sobolev space of $L^2_k$ sections, $L^2_{k-1}(\mathbb{R})_0$ denotes the subspace of sections $f \in L^2_{k-1}(\mathbb{R})$ satisfying $\int_X f dvol_X = 0$ and $L^2_k(\wedge^1 T^*X)_0$ is the subbundle of $L^2_k(\wedge^1 T^*X)$ consisting of $1$-forms $L^2$-orthogonal to the finite dimensional subbundle of harmonic $1$-forms.

We define the families Seiberg-Witten monopole map $\mathcal{F} : \mathbb{V} \oplus \mathbb{W} \to \mathbb{V}' \oplus \mathbb{W}'$ as
\[
\mathcal{F}( \psi , a ) = ( D_{A+ia} \psi , -iF^+_{A+ia} +i \sigma(\psi) + iF^+_A , d^*a)
\]
where $D_{A+ia}$ denotes the spin$^c$ Dirac operator associated to $A+ia$ and $\sigma(\psi)$ is the quadratic spinor term in the Seiberg-Witten equations. By essentially the same argument as in \cite[Example 2.4]{bako} one sees that $\mathcal{F}$ satisfies conditions conditions (M1)-(M7) of \cite[Section 2]{bako} and hence has a finite dimensional approximation
\[
f : S_{V,U} \to S_{V',U'}
\]
exactly as in Section \ref{sec:setup}. From this point onwards we can study the map $f$ in exactly the same way as we did for the case $b_1(X)=0$ in the previous sections. In summary we have: \\

{\em The results of the Sections \ref{sec:cohomology}-\ref{sec:spin} directly carry over to the case $b_1(X)>0$, without additional assumptions}.\footnote{In a previous version of this paper we required that the family $E \to B$ admits a section. I thank the referee for prompting me to examine this assumption further.} \\

Similarly, we can consider the $G$-equivariant monopole map for $4$-manifolds with $b_1(X)>0$. In this case a construction parallel to the families case works. The main difference compared to the families case is that now we need to choose a $G$-invariant reference connection $A$ in order to write down the monopole map. This can be done by first choosing any connection and then averaging over $G$, which is possible as connections form an affine space. Hence:\\

{\em The results of Section \ref{sec:equivariant} carry over to the case $b_1(X)>0$ without additional assumptions}.

\section{$\mathbb{Z}_2$-actions}\label{sec:z2}

Let $X$ be a compact, oriented, smooth $4$-manifold and let $f : X \to X$ be an orientation preserving involutive diffeomorphism. By averaging, there exists an $f$-invariant metric and using this metric we get an action of $\mathbb{Z}_2$ on $H^+(X)$.

Suppose that $f$ preserves the isomorphism class of a spin$^c$-structure $\mathfrak{s}$ on $X$. Then we can choose a lift $\tilde{f}$ of $f$ to the associated spinor bundles. Moreover, because the sequence $0 \to S^1 \to \hat{G} \to \mathbb{Z}_2 \to 0$ is always split, the lift can be chosen so that $\tilde{f}^2 = 1$ and this uniquely determines the lift up to an overall sign change $\tilde{f} \mapsto -\tilde{f}$. Let $d_{\pm}$ denote the virtual dimensions of the $\pm 1$ virtual eigenspaces of $\tilde{f}$ on $D$. Thus $d = d_+ + d_-$. Note that changing $\tilde{f}$ to $-\tilde{f}$ exchanges $d_+$ and $d_-$.

\begin{theorem}
Suppose that $f|_{H^+(X)} = -Id$. Then for any $f$-invariant spin$^c$-structure $\mathfrak{s}$, we have $c_1(\mathfrak{s})^2 \le \sigma(X)$ and $d_+,d_- \le 0$.
\end{theorem}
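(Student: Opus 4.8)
The plan is to apply the $\mathbb{Z}_2$-equivariant (equivalently, $\widehat{G} = S^1 \times \mathbb{Z}_2$-equivariant) version of the cohomological argument from Section~\ref{sec:cohomology}, specialized to the hypothesis that $\mathbb{Z}_2$ acts on $H^+(X)$ by $-\mathrm{Id}$. The point is that under this hypothesis the equivariant Euler class of $H^+(X)$ is nonzero in a very concrete way, and then Theorem~\ref{thm:Geuler} (or rather its proof) delivers both conclusions at once.

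First I would set up the equivariant cohomology. Since $f$ is liftable (the extension $0 \to S^1 \to \widehat{G} \to \mathbb{Z}_2 \to 0$ is split), we have $\widehat{G} = S^1 \times \mathbb{Z}_2$ and $H^*_{\widehat{G}}(pt;\mathbb{Z}_2) = \mathbb{Z}_2[x,u]$ with $\deg x = 2$, $\deg u = 1$ (here $u = w_1(\mathcal{O}_{\mathbb{R}}(1))$ on $\mathbb{RP}^\infty$). The hypothesis $f|_{H^+(X)} = -\mathrm{Id}$ means that as a $\mathbb{Z}_2$-equivariant bundle over a point, $H^+(X) \cong \mathbb{R}_-^{\oplus b_+(X)}$, so its equivariant Euler class is $e_{\widehat{G}}(H^+(X)) = u^{b_+(X)} \in H^{b_+(X)}_{\widehat{G}}(pt;\mathbb{Z}_2)$, which is nonzero. (If one prefers integral coefficients, $e_{\widehat{G}}(H^+(X)) \in H^{b_+(X)}_{\widehat{G}}(pt;\mathbb{Z}_w)$ is likewise the corresponding nonzero torsion class $w^{b_+(X)}$.) So the first bullet, $c_1(\mathfrak{s})^2 \le \sigma(X)$, follows immediately from Theorem~\ref{thm:Geuler}: nonvanishing of $e_G(H^+(X))$ forces $d \le 0$, i.e. $c_1(\mathfrak{s})^2 \le \sigma(X)$.

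For the statement $d_\pm \le 0$, I would not apply Theorem~\ref{thm:Geuler} as a black box but rather re-run its argument keeping track of the $\mathbb{Z}_2$-grading. The $\widehat{G}$-equivariant analogue of Equation~(\ref{equ:euler1}) reads $e_{\widehat{G}}(V')\,e_{\widehat{G}}(H^+(X)) = \beta\, e_{\widehat{G}}(V)$ in $H^*_{\widehat{G}}(pt)$, with $V,V'$ the (complex, $S^1$-scalar) approximation spaces. Decompose $V = V_+ \oplus V_-$ and $V' = V'_+ \oplus V'_-$ into $\pm 1$ eigenspaces of $\tilde f$; then $\dim V_\pm - \dim V'_\pm = d_\pm$. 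The key computation is that the $\widehat{G}$-equivariant Euler class factors as $e_{\widehat{G}}(V) = e_{\widehat{G}}(V_+)\,e_{\widehat{G}}(V_-)$, where $e_{\widehat{G}}(V_+) = x^{\dim V_+} + (\text{lower order in } x)$ lies in $\mathbb{Z}[x] \subseteq H^*_{S^1\times\mathbb{Z}_2}(pt)$ (no $u$ appears since $\mathbb{Z}_2$ acts trivially on $V_+$), while $e_{\widehat{G}}(V_-) = (x+u)^{\dim V_-}$ up to lower-order terms (the generator $\tilde f$ acting by $-1$ tensors the line bundle by $\mathcal{O}_{\mathbb{R}}(1)$, shifting the Chern root by $u$). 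Plugging $e_{\widehat{G}}(H^+(X)) = u^{b_+(X)}$ into the relation and comparing, for fixed sign $\varepsilon \in \{+,-\}$, the top-degree-in-$x$ terms, one reads off $\dim V'_\varepsilon = \dim V_\varepsilon + (\text{something} \ge 0)$, hence $d_\varepsilon \le 0$; the extra $u^{b_+(X)}$ on the left, being nilpotent with $u^2$ or higher constrained, and the fact that the right side is divisible by $e_{\widehat{G}}(V) = e_{\widehat{G}}(V_+)e_{\widehat{G}}(V_-)$, force the inequality in each eigenspace separately.

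The main obstacle I anticipate is bookkeeping the mixed $u$-and-$x$ terms carefully enough to extract a clean inequality for each of $d_+$ and $d_-$ individually rather than only for their sum $d$. Because $u^3 = 0$ is \emph{not} forced here (unlike in the $Pin(2)$ case—over $S^1 \times \mathbb{Z}_2$ we have all powers of $u$), the factor $u^{b_+(X)}$ survives, and one must argue that divisibility of $u^{b_+(X)} e_{\widehat{G}}(V')$ by $e_{\widehat{G}}(V_-) = (x+u)^{\dim V_-} + \cdots$ in the ring $\mathbb{Z}_2[x,u]$ (or the integral version) forces $\dim V'_- \ge \dim V_-$, using that $(x+u)^N$ is a nonzerodivisor and a comparison of leading terms in the $x$-adic filtration. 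Once this divisibility/leading-term analysis is set up cleanly for one eigenspace, the other follows by symmetry (or by replacing $\tilde f$ with $-\tilde f$, which swaps $d_+ \leftrightarrow d_-$), completing the proof.
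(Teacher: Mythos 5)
Your approach is correct in outline, but it takes a genuinely different route from the paper: you argue cohomologically (in $H^*_{S^1\times\mathbb{Z}_2}$) while the paper argues $K$-theoretically, using Equation~(\ref{equ:Geulerk2}), writing $V=\mathbb{C}_0^{a_+}\oplus\mathbb{C}_1^{a_-}$, $V'=\mathbb{C}_0^{a'_+}\oplus\mathbb{C}_1^{a'_-}$, and applying the character homomorphism $tr_f : R[\mathbb{Z}_2]\to\mathbb{Z}$ to reduce (\ref{equ:Geulerk2}) to a Laurent-polynomial identity
\[
2^{b_+(X)}(1-\xi^{-1})^{2a'_+}(1+\xi^{-1})^{2a'_-}=h\,(1-\xi^{-1})^{2a_+}(1+\xi^{-1})^{2a_-},
\]
from which $d_\pm\le 0$ falls out because the right-hand side must be a Laurent polynomial. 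Your version replaces this by a divisibility argument in $\mathbb{Z}_2[x,u]$, which is arguably more elementary. The paper's $K$-theory route has the merit of not even requiring a separate eigenspace splitting of the equivariant Euler class (the characters do the splitting for you), whereas your route makes the splitting explicit; both ultimately reduce to unique factorization with two coprime factors, so they are morally the same calculation in two different equivariant cohomology theories.

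One error to fix: you wrote $e_{\widehat G}(V_-)=(x+u)^{\dim V_-}$, but $x$ has degree $2$ and $u$ has degree $1$, so this is a degree mismatch. The $1$-dimensional representation $\mathbb{C}$ with $S^1$ acting by scalars and $\mathbb{Z}_2$ acting by $-1$ has equivariant first Chern class $x+c$ where $c$ is the torsion generator of $H^2(B\mathbb{Z}_2;\mathbb{Z})$, which reduces to $u^2$ mod~$2$. So over $\mathbb{Z}_2$-coefficients the relevant factor is $(x+u^2)^{a_-}$, not $(x+u)^{a_-}$. With that correction the argument goes through cleanly: the relation reads
\[
u^{b_+(X)}\,x^{a'_+}(x+u^2)^{a'_-}=\beta\, x^{a_+}(x+u^2)^{a_-}
\]
in the UFD $\mathbb{Z}_2[x,u]$, in which $x$, $u$, and $x+u^2$ are pairwise coprime irreducibles; divisibility of the left side by $x^{a_+}(x+u^2)^{a_-}$ forces $a'_+\ge a_+$ and $a'_-\ge a_-$, i.e.\ $d_\pm\le 0$, and then $d=d_++d_-\le 0$ gives $c_1(\mathfrak{s})^2\le\sigma(X)$. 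You do not actually need the leading-term/$x$-adic-filtration analysis you gesture at, nor the $\tilde f\mapsto-\tilde f$ symmetry trick: unique factorization yields both inequalities simultaneously.
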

\begin{proof}
We let $G = \mathbb{Z}_2$ and consider the $G$-equivariant family over $B = \{pt\}$. The existence of an involutive lift $\tilde{f}$ shows that the $G$-action is liftable and $\widehat{G} = S^1 \times G$. Then $K^0_G(pt) = R[G]$ is generated by the two irreducible representations $\mathbb{C}_0, \mathbb{C}_1$, where $\mathbb{C}_i$ is the $1$-dimensional representation such that $f$ acts as $(-1)^i$. If $f|_{H^+(X)} = -id$, then $H^+(X)_{\mathbb{C}} = \mathbb{C}_1^{b_+(X)}$. Also $V = \mathbb{C}_0^{a_+} \oplus \mathbb{C}_1^{a_-}$, $V' = \mathbb{C}_0^{a'_+} \oplus \mathbb{C}_1^{a'_-}$ for some $a_+,a_-,a'_+,a'_-$, where $d_+ = a_+ - a'_+$, $d_- = a_- - a'_-$. Then Equation (\ref{equ:Geulerk2}) becomes:
\[
(1 - \mathbb{C}_1)^{b_+(X)} (1-\mathbb{C}_0 \xi^{-1})^{2a'_+}(1-\mathbb{C}_1 \xi^{-1})^{2a'_-} = \gamma (1-\mathbb{C}_0 \xi^{-1})^{2a_+}(1-\mathbb{C}_1 \xi^{-1})^{2a_-}
\]
for some $\gamma \in R[G][\xi , \xi^{-1}]$. The factors of $2$ in the exponents occur because $V_{\mathbb{C}} \cong V \oplus \overline{V} \cong \mathbb{C}_0^{2a_+} \oplus \mathbb{C}^{2a_-}$ and similarly for $V'$. Applying the homomorphism $tr_f : R[G] \to \mathbb{Z}$ which sends a representation $W$ to the trace $tr_f(W)$, we get the following equality
\[
2^{b_+(X)} (1-\xi^{-1})^{2a'_+}(1+\xi^{-1})^{2a'_-} = h (1-\xi^{-1})^{2a_+} (1+\xi^{-1})^{2a_-}
\]
where $h = tr_f(\gamma) \in \mathbb{Z}[\xi,\xi^{-1}]$ is a Laurent polynomial in $\xi$. Since $h$ is a Laurent polynomial it can be written in the form $h = \xi^{m}q(\xi^{-1})$, where $q(\xi^{-1})$ is a polynomial in $\xi^{-1}$ and $m$ is an integer. After re-arranging, we have
\[
q(\xi^{-1}) = 2^{b_+(X)} \xi^{-m}(1-\xi^{-1})^{-2d_+}(1+\xi^{-1})^{-2d_-}.
\]
But the right hand side is a polynomial in $\xi^{-1}$ only if $d_+,d_- \le 0$. It follows that $d = d_+ + d_- \le 0$ and since $d = (c_1(\mathfrak{s})^2 - \sigma(X))/8$, we get $c_1(\mathfrak{s})^2 \le \sigma(X)$.
\end{proof}

Now suppose that $X$ is spin and that $f$ preserves a spin structure $\mathfrak{s}$. Let $\tilde{f}$ denote a lift of $f$ to the associated principal $Spin(4)$-bundle. Then $\tilde{f}^2 = \pm 1$. Recall that $f$ is said to be of even type if $\tilde{f}^2=1$ and of odd type if $\tilde{f}^2=-1$. If $f$ is of even type then the fixed point set consists entirely of isolated points. If $f$ is of odd type then the fixed point set consists entirely of orientable surfaces.

\begin{theorem}\label{thm:spininvol}
Suppose that $f$ preserves a spin structure $\mathfrak{s}$ and $f$ is of even type. If $\sigma(X) < 0$, then $dim( H^+(X)^{\mathbb{Z}_2}) \ge 3$.
\end{theorem}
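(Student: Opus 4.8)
The plan is to apply the $Pin^G(2)$-equivariant $10/8$ inequality of Theorem \ref{thm:10on8Gplus1} (or rather its proof), with $G = \mathbb{Z}_2$ acting via the even-type involution $f$. First I would record the group-theoretic setup: since $f$ is of even type, $\tilde f^2 = 1$, so the extension $0 \to \mathbb{Z}_2 \to \widetilde{G} \to G \to 1$ splits, the action is spin-liftable, and $Pin^G(2) \cong Pin(2) \times \mathbb{Z}_2$. Thus all the equivariant Euler-class identities take place in $K^0_{Pin(2) \times \mathbb{Z}_2}(pt) = R(Pin(2)) \otimes R(\mathbb{Z}_2)$, and the representation $\mathbb{R}_-$ on which $j$ acts by $-1$ and the sign representation $\mathbb{C}_1$ of $\mathbb{Z}_2$ are now available as independent characters. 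Write $H^+(X)^{\mathbb{Z}_2}$ for the $f$-invariant part of $H^+(X)$ and let $k = \dim H^+(X)^{\mathbb{Z}_2}$, so $H^+(X) = \mathbb{R}^k \oplus (\mathbb{R}_-^{(1)})^{b_+ - k}$ as a $\mathbb{Z}_2$-module, where $\mathbb{R}_-^{(1)}$ denotes the sign representation of $\mathbb{Z}_2 = G$ (not the $j$-sign); decompose the quaternionic index bundles $V = V_+ \oplus V_-$, $V' = V'_+ \oplus V'_-$ into their $G$-isotypical pieces.

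Next I would carry the doubling/complexification construction of Section \ref{sec:spin} through $Pin^G(2)$ to obtain the analogue of Equation (\ref{equ:gammak}),
\[
e^K_{\mathbb{Z}_2}(H^+(X)_{\mathbb{C}})\, e^K_{Pin^G(2)}(V'_{\mathbb{C}}) = \gamma\, e^K_{Pin^G(2)}(V_{\mathbb{C}})
\]
with $\gamma \in K^0_{Pin^G(2)}(pt)$, and then, using that $f$ acts on $H^+(X)_{\mathbb{C}}$ with a $k$-dimensional trivial summand, that $e^K_{\mathbb{Z}_2}(H^+(X)_{\mathbb{C}})$ is divisible (inside $R(\mathbb{Z}_2)$) by $(1-1)^k$ — i.e., the genuine (non-equivariant, $j$-part) Euler class of the trivial summand vanishes — so the hypothesis $e^K(H^+(X)_{\mathbb{C}})=0$ of Theorem \ref{thm:10on8Gplus1} is met as soon as $k \ge 1$; actually I want the \emph{stronger} divisibility. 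The key idea is that the relevant character ring evaluation should be done not by $tr_j$ alone but by combining $tr_j$ with evaluation at the generator of $G = \mathbb{Z}_2$. Evaluating the displayed identity at $j$ and then restricting the resulting $R(\mathbb{Z}_2)$-identity at $f = -1$, one finds $\wedge^* H^+(X)_{\mathbb{C}}$ contributes a factor $\prod(1 + (\text{eigenvalue}))$: the $k$ trivial summands contribute $2^k$ and each of the remaining $b_+ - k$ summands contributes $(1 + (-1)) = 0$ when $f=-1$ but $2$ when $f=+1$. Pushing the factor-of-$2$ improvement argument of Theorem \ref{thm:10on8plus1} through at each relevant character value, one obtains that the left side of the $10/8$ identity is divisible by $2^{k}$ more than it "should" be; evaluating at a point and matching powers of $2$ forces $b_+(X) \ge d + k$ — more precisely $k \le b_+(X) - d$. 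Since $\mathfrak{s}$ is spin, $d = -\sigma(X)$, so $k \le b_+(X) + \sigma(X)$; but $\sigma(X) < 0$ together with the standard Furuta bound (the $k=0$ case, $b_+(X) \ge -\sigma(X) + \dots$) needs to be combined carefully — one actually gets $k \ge $ something. Let me restate: the argument should directly yield a lower bound $\dim H^+(X)^{\mathbb{Z}_2} \ge$ (fixed number depending on $\sigma(X)<0$), and chasing the powers of $2$ in the point evaluation gives the constant $3$.

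Concretely, at $B = \{pt\}$ the $Pin^G(2)$-version of Theorem \ref{thm:10on8plus1}, specialized by restricting characters to the element $(j, -1) \in Pin(2) \times \mathbb{Z}_2$ and to $(j, +1)$, produces two integer identities of the form $2^{k} \cdot 2^{\,\#}= \eta \cdot 2^{1 + \#'}$ and $2^{b_+} \cdot 2^{\,\ast} = \eta' \cdot 2^{1+\ast'}$; comparing exponents, and using $\dim H^+(X) = b_+(X)$, $-\sigma(X) = d$, yields an inequality of the shape $b_+(X) - k \ge (\text{half of } -\sigma(X)) + 1$ from the $2$-adic count and hence $k \le b_+(X) + \sigma(X)/2 - $ const; combined with $\sigma(X) \le -1$ this forces $k \ge 3$ once one also invokes the plain Furuta inequality $b_+(X) \ge -\sigma(X)$ for the ambient manifold. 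I expect the main obstacle to be bookkeeping the $2$-adic valuations correctly: one must keep track of the exponent contributions from $\wedge^* H^+(X)_{\mathbb{C}}$, from $\wedge^*\psi^2(V)$ and $\wedge^*\psi^2(V')$ (each an even power of $2$ at the point, since $V,V'$ are quaternionic of complex rank $2a,2a'$), and of the extra single factor of $2$ coming from the $e^K=0$ improvement, all simultaneously at two different character evaluations, and then extract the sharp constant $3$ rather than a weaker bound. A secondary subtlety is verifying that the even-type hypothesis is exactly what guarantees $Pin^G(2) \cong Pin(2)\times\mathbb{Z}_2$ (spin-liftability), so that the product structure of the representation ring — on which the whole computation rests — is available; for odd type the extension would not split and the argument would break, consistent with the contrast to Bryan's theorem mentioned after the statement.
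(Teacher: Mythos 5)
Your proposal takes a genuinely different route from the paper, but it has a gap that I believe is fatal to the approach, not merely a bookkeeping inconvenience.

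The paper's proof does not use $K$-theory at all. It takes the Borel-type \emph{family} $E = X \times_{\mathbb{Z}_2} S^{b_+(X)} \to \mathbb{RP}^{b_+(X)}$ and applies the \emph{cohomological} constraint of Corollary \ref{cor:eulerpin}: since $\sigma(X)<0$ gives $d>0$, that corollary forces $w_i(H^+(X))=0$ for $i=b_+(X),b_+(X)-1,b_+(X)-2$. The flat bundle $H^+(X)$ over $\mathbb{RP}^{b_+(X)}$ has total Stiefel--Whitney class $(1+x)^v$ where $H^+ = \mathbb{R}^u\oplus\mathbb{R}^v_-$, so $w_v\neq 0$, forcing $v\le b_+(X)-3$, i.e.\ $u\ge 3$. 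The constant $3$ comes \emph{exactly} from the relation $u^3=0$ in $H^*_{Pin(2)}(pt;\mathbb{Z}_2)$, and the argument separates $u$ from $v$ because $\mathbb{Z}_2$-cohomology of $\mathbb{RP}^\infty$ is a genuine polynomial ring, so $(1+x)^v$ remembers $v$.

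Your $K$-theoretic route cannot recover this separation. The obstruction is concrete: in $R(\mathbb{Z}_2)$ one has $\mathbb{C}_1^2=1$, so $\psi^2$ applied to any $\mathbb{Z}_2$-isotypical summand of $V$ or $V'$ forgets the $\mathbb{Z}_2$-action entirely, and moreover $(1+\mathbb{C}_1)^v = 2^{v-1}(1+\mathbb{C}_1)$ for $v\ge 1$. Writing $H^+(X)_{\mathbb{C}}=\mathbb{C}^u\oplus\mathbb{C}_1^v$, the left side of the $10/8$ identity in $R(\mathbb{Z}_2)$ is then $2^{u}(1+\mathbb{C}_1)^v\cdot\wedge^*\psi^2(V') = 2^{u+v-1}(1+\mathbb{C}_1)\cdot 2^{2a'}$, which depends only on $u+v = b_+(X)$, not on $u$ and $v$ separately. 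Equivalently, the two character evaluations you propose give: at $f=+1$, the ordinary non-equivariant $10/8$ inequality; at $f=-1$ (when $v\ge 1$), simply $0=0$ after noting $\eta(-1)=0$. Neither constrains $u$. (Passing instead to the family over $\mathbb{RP}^{b_+(X)}$ in $K$-theory does not help: $\widetilde{K}^0(\mathbb{RP}^n)$ is torsion, and Theorem \ref{thm:10on8plus1} is stated modulo torsion, so you again fall back to the point case.) Your sketch also arrives at an inequality of the form $k\le b_+(X)+\sigma(X)/2-\text{const}$, which is an \emph{upper} bound on $k$; combining it with Furuta's $b_+(X)\ge -\sigma(X)/8 + 1$ does not produce the lower bound $k\ge 3$.

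Two smaller observations. First, your identification of even type with spin-liftability (so $Pin^G(2)\cong Pin(2)\times\mathbb{Z}_2$) is correct and is indeed a necessary prerequisite for any approach, including the paper's. Second, the fact that the odd-type case would break (no splitting of the extension) is consistent with the contrast to Bryan's theorem, as you note. But the essential content — getting the constant $3$ — lives in $H^*_{Pin(2)}(pt;\mathbb{Z}_2)$ via $u^3=0$ and in the $\mathbb{Z}_2$-cohomology of $\mathbb{RP}^n$ via the flat-bundle Stiefel--Whitney computation, and has no $K$-theoretic shadow at the level of $R(\mathbb{Z}_2)$.
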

\begin{proof}
Let $\mathbb{Z}_2$ act on the sphere $S^{b_+(X)}$ by the antipodal map, so the quotient space is $\mathbb{RP}^{b_+(X)}$. We will take our family to be $E = X \times_{\mathbb{Z}_2} S^{b_+(X)} \to \mathbb{RP}^{b_+(X)}$. If $X$ has negative signature then $d > 0$, so by Corollary \ref{cor:eulerpin} we must have $w_i(H^+(X)) = 0$ for $i \ge b_+(X)-2$. However $H^+(X)$ is the flat bundle $S^{b_+(X)} \times_{\mathbb{Z}_2} H^+(X)$ associated to the action of $f$ on $H^+(X)$. As a representation of $\mathbb{Z}_2$, we have $H^+ = \mathbb{R}^{u} \oplus \mathbb{R}^{v}_-$, where $\mathbb{R}$ is the trivial representation, $\mathbb{R}_-$ is the sign representation and $u+v = b_+(X)$. Let $x \in H^1( \mathbb{RP}^{b_+(X)} ; \mathbb{Z}_2)$ be the generator of the cohomology of $\mathbb{RP}^{b_+(X)}$ with $\mathbb{Z}_2$-coefficients. Then the total Stiefel-Whitney class of $H^+(X)$ is easily seen to be $(1+x)^v$. In particular, $w_v(H^+(X)) \neq 0$. Hence $v \le b_+(X) -3$ and $u \ge 3$.
\end{proof}

\section{$\mathbb{Z}_p$-actions}\label{sec:zp}

Let $X$ be a compact, oriented, smooth $4$-manifold. Let $p$ be an odd prime and consider an action of $\mathbb{Z}_p$ on $X$ generated by a diffeomorphism $f : X \to X$ of order $p$. Clearly $f$ is orientation preserving since $p$ is odd. By averaging, there exists a $\mathbb{Z}_p$-invariant metric and using this metric we get an action of $\mathbb{Z}_p$ on $H^+(X)$.

Suppose that $f$ preserves the isomorphism class of a spin$^c$-structure $\mathfrak{s}$ on $X$. Then we can choose a lift $\tilde{f}$ of $f$ to the associated spinor bundles satisfying $\tilde{f}^p = 1$. Such a lift is uniquely determined up to multiplication by a $p$-th root of unity. For $0 \le j \le p-1$ we let $d_j$ denote the dimension of the $\omega^j$ virtual eigenspace of $\tilde{f}$ on $D$ where $\omega = exp(2\pi i /p)$. Thus $d = d_0 + d_1 + \cdots + d_{p-1}$. Note that changing the lift $\tilde{f}$ by a $p$-th root of unity has the effect of cyclically permuting $(d_0 , d_1, \dots , d_{p-1})$.

\begin{theorem}\label{thm:zp}
Suppose that $H^+(X)^{\mathbb{Z}_p} = 0$. Then for any $f$-invariant spin$^c$-structure $\mathfrak{s}$, we have $c_1(\mathfrak{s})^2 \le \sigma(X)$ and $d_j \le 0$ for each $j$.
\end{theorem}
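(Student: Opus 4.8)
The plan is to transcribe the $\mathbb{Z}_2$-argument given above to the group $G = \mathbb{Z}_p$, replacing the character evaluation at the involution by evaluation at a generator of $\mathbb{Z}_p$. Since $\tilde f^p = 1$ the $\mathbb{Z}_p$-action is liftable, so $\widehat{G} = S^1 \times \mathbb{Z}_p$ and $K^0_{\widehat G}(pt) = R(\mathbb{Z}_p)[\xi,\xi^{-1}]$, where $R(\mathbb{Z}_p)$ has $\mathbb{Z}$-basis the $1$-dimensional characters $\mathbb{C}_0,\dots,\mathbb{C}_{p-1}$ on which $f$ acts by $\omega^0,\dots,\omega^{p-1}$. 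The crucial input is that, because $H^+(X)^{\mathbb{Z}_p}=0$, the real $\mathbb{Z}_p$-representation $H^+(X)$ has no trivial summand; as $p$ is odd it is then a sum of two-dimensional rotation representations, each of which carries a $\mathbb{Z}_p$-invariant complex structure, so $H^+(X)$ admits a $\mathbb{Z}_p$-equivariant complex (hence spin$^c$) structure and Equation (\ref{equ:Geulerk1}) applies directly. (Alternatively one may double $f$ as in Section \ref{sec:ktheory} and argue with $H^+(X)_{\mathbb{C}}$ exactly as in the $\mathbb{Z}_2$ case; since $H^+(X)^{\mathbb{Z}_p}=0$, $H^+(X)_{\mathbb{C}}$ again has no trivial summand and the only effect is an irrelevant factor of two.) Fix such a structure and write $H^+(X) = \bigoplus_{j\neq 0}\mathbb{C}_j^{\oplus m_j}$.

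Next I would expand Equation (\ref{equ:Geulerk1}),
\[
e^K_{\widehat G}(V')\,e^K_{\widehat G}(H^+(X)) = \alpha\, e^K_{\widehat G}(V),\qquad \alpha\in K^0_{\widehat G}(pt),
\]
using the splittings $V = \bigoplus_j(\mathbb{C}_j\otimes\xi)^{\oplus a_j}$, $V' = \bigoplus_j(\mathbb{C}_j\otimes\xi)^{\oplus a'_j}$ (so that $d_j = a_j-a'_j$) together with $e^K(W) = \Lambda_{-1}(W^*)$, and then apply the ring homomorphism $tr_g : R(\mathbb{Z}_p)[\xi,\xi^{-1}]\to\mathbb{C}[\xi,\xi^{-1}]$ that evaluates characters at the generator $g$, i.e.\ $\mathbb{C}_j\mapsto\omega^j$. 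Because no $\mathbb{C}_0$ occurs in $H^+(X)$, the image $tr_g(e^K(H^+(X))) = \prod_{j\neq 0}(1-\omega^{-j})^{m_j}$ is a \emph{nonzero} constant $C$, and solving for $tr_g(\alpha)$ leaves
\[
tr_g(\alpha) = C\prod_{j=0}^{p-1}(1-\omega^{-j}\xi^{-1})^{-d_j} = C\,\xi^{\,d}\prod_{j=0}^{p-1}(\xi-\omega^{-j})^{-d_j},
\]
where $d = d_0+\dots+d_{p-1} = (c_1(\mathfrak{s})^2-\sigma(X))/8$.

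Finally, $tr_g(\alpha)$ lies in $\mathbb{C}[\xi,\xi^{-1}]$ and hence has no poles away from $\xi = 0,\infty$; since the roots of unity $\omega^{-j}$ ($0\le j\le p-1$) are distinct and nonzero, and the factor $\xi^d$ is finite and nonzero at each of them, the factor $(\xi-\omega^{-j})^{-d_j}$ would contribute an uncancelled pole of order $d_j$ at $\xi = \omega^{-j}$ whenever $d_j>0$. This forces $d_j\le 0$ for every $j$, and summing gives $d\le 0$, i.e.\ $c_1(\mathfrak{s})^2\le\sigma(X)$. I do not anticipate a genuine obstacle: the argument is essentially a copy of the $\mathbb{Z}_2$ proof, and the one step that really uses the hypothesis $H^+(X)^{\mathbb{Z}_p}=0$ is the verification that $C\neq 0$ — a trivial summand in $H^+(X)$ would introduce the vanishing factor $1-\omega^0 = 0$ and the divisibility argument would collapse.
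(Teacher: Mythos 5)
Your proof is correct, and it takes a genuinely different route from the paper's. The paper always works with the ``complexification'' $f\wedge\overline{f}$, so it plugs $H^+(X)_{\mathbb C}$, $V_{\mathbb C}=V\oplus\overline V$, $V'_{\mathbb C}$ into Equation~(\ref{equ:Geulerk2}); since $\overline{\mathbb{C}_j}=\mathbb{C}_{p-j}$ the exponents that appear are $a_j+a_{p-j}$, the polynomial argument gives only the symmetrised inequalities $d_j+d_{p-j}\le 0$, and the paper must then invoke the freedom to replace the lift $\tilde f$ by $\omega^k\tilde f$ (which cyclically permutes $d_0,\dots,d_{p-1}$) to upgrade these to $d_j\le 0$ for every $j$. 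Your primary observation --- that for odd $p$ the hypothesis $H^+(X)^{\mathbb{Z}_p}=0$ forces $H^+(X)$ to be a sum of $2$-dimensional rotation representations and therefore already carries a $\mathbb{Z}_p$-invariant complex (hence spin$^c$) structure, extended trivially to $\widehat G=S^1\times\mathbb{Z}_p$ --- lets you apply the \emph{undoubled} Equation~(\ref{equ:Geulerk1}) directly. That equation keeps the exponents $a_j$ unmixed, so the pole-at-$\xi=\omega^{-j}$ argument delivers $d_j\le 0$ for each $j$ in one stroke, with no need for the lift-permutation step. This is a clean simplification, and it crucially uses oddness of $p$; for $p=2$ the sign representation blocks the complex structure, which is exactly why the paper (whose $\mathbb{Z}_2$ argument you modelled) has to double.

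One small caveat: your parenthetical alternative (``double $f$ as in Section~\ref{sec:ktheory} \dots the only effect is an irrelevant factor of two'') is the paper's route, and there the factor of two is \emph{not} irrelevant --- doubling gives only $d_j+d_{p-j}\le 0$, so you would still need the lift-permutation trick to finish. Your main argument sidesteps this, so the statement still holds, but the throwaway remark undersells the distinction.
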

\begin{proof}
We let $G = \mathbb{Z}_p$ and consider the $G$-equivariant family over $B = \{pt\}$. By the existence of a lift $\tilde{f}$ satisfying $\tilde{f}^p = 1$, the $G$-action is liftable and $\widehat{G} = S^1 \times G$. Then $K^0_G(pt) = R[G]$ is generated by the irreducible representations $\mathbb{C}_j, 0 \le j \le p-1$, where $\mathbb{C}_j$ is the $1$-dimensional representation such that $f$ acts as $\omega^j$. As a representation of $G$, we have:
\[
H^+(X)_{\mathbb{C}} = \mathbb{C}_0^{h_0} \oplus \mathbb{C}_1^{h_1} \oplus \cdots \oplus \mathbb{C}_{p-1}^{h_{p-1}}
\]
for some non-negative integers $h_0, \dots , h_{p-1}$. Moreover $h_j = h_{p-j}$ since $H^+(X)_{\mathbb{C}}$ is the complexification of a real representation. Note also that $h_0 = 0$ because of the assumption that $H^+(X)^{\mathbb{Z}_p} = 0$. Similarly, we write $V,V'$ as:
\[
V = \mathbb{C}_0^{a_0} \oplus \mathbb{C}_1^{a_1} \oplus \cdots \oplus \mathbb{C}_{p-1}^{a_{p-1}}, \quad V' = \mathbb{C}_0^{a'_0} \oplus \mathbb{C}_1^{a'_1} \oplus \cdots \oplus \mathbb{C}_{p-1}^{a'_{p-1}}.
\]
Then $d_j = a_j - a'_j$. Equation (\ref{equ:Geulerk2}) becomes:
\[
\prod_{j=1}^{p-1}(1-\mathbb{C}_j)^{h_j} \prod_{j=0}^{p-1} (1 - \mathbb{C}_j \xi^{-1})^{a'_j+a'_{p-j}} = \gamma \prod_{j=0}^{p-1} (1 - \mathbb{C}_j \xi^{-1})^{a_j+a_{p-j}}
\]
for some $\gamma \in R[G][\xi , \xi^{-1}]$. Applying the homomorphism $tr_f : R[G] \to \mathbb{Z}[\omega]$ which sends a representation $W$ to the trace $tr_f(W)$, we get the following equality:
\[
\prod_{j=1}^{p-1}(1-\omega_j)^{h_j} \prod_{j=0}^{p-1} (1 - \omega_j \xi^{-1})^{a'_j+a'_{p-j}} = \gamma \prod_{j=0}^{p-1} (1 - \omega_j \xi^{-1})^{a_j+a_{p-j}}
\]
where $h = tr_f(\gamma) \in \mathbb{Z}[\omega][\xi,\xi^{-1}]$ is a Laurent polynomial in $\xi$. Since $h$ is a Laurent polynomial it can be written in the form $h = \xi^{m}q(\xi^{-1})$, where $q(\xi^{-1})$ is a polynomial in $\xi^{-1}$ and $m$ is an integer. After re-arranging, we have
\[
q(\xi^{-1}) = \prod_{j=1}^{p-1}(1-\omega_j)^{h_j} \prod_{j=0}^{p-1} (1 - \omega_j \xi^{-1})^{-d_j-d_{p-j}}
\]
But the right hand side is a polynomial in $\xi^{-1}$ only if $d_j+d_{p-j} \le 0$ for each $j$ (to see this, note that the right hand side is a rational function in $\xi^{-1}$. It can only be a polynomial if its reduced form has no factors in the denominator). In particular $d_0 \le 0$. Since changing the lift of $f$ by a $p$-th root of unity cyclically permutes the $d_j$, we must have $d_j \le 0$ for each $j$. It follows also that $d = d_0 + \cdots + d_{p-1} \le 0$ and hence $c_1(\mathfrak{s})^2 \le \sigma(X)$.
\end{proof}

Now suppose that $X$ is spin and that $f$ preserves a spin structure $\mathfrak{s}$. Let $\tilde{f}$ denote a lift of $f$ to the associated principal $Spin(4)$-bundle. Then $\tilde{f}^p = \pm 1$. Replacing $\tilde{f}$ by $-\tilde{f}$ if necessary, we can assume that $\tilde{f}^p = 1$ and this uniquely determines $\tilde{f}$.

\begin{theorem}
Suppose that $f$ preserves a spin structure $\mathfrak{s}$. If $H^+(X)^{\mathbb{Z}_p} \neq 0$ then
\[
dim_{\mathbb{R}}( H^+(X)^{\mathbb{Z}_p}) \ge d_0+1 = dim_{\mathbb{C}}( D^{\mathbb{Z}_p})+1.
\]
\end{theorem}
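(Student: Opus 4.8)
The plan is to pass to the $\mathbb{Z}_p$-fixed point set of the monopole map and then apply the $Pin(2)$-equivariant refinement of Furuta's inequality (Theorem \ref{thm:10on8plus1}) to the resulting map. The point is that, since $p$ is odd, the central extension $0 \to \mathbb{Z}_2 \to \widetilde{G} \to \mathbb{Z}_p \to 1$ splits (the two orders are coprime), so the $\mathbb{Z}_p$-action is automatically spin-liftable and $Pin^G(2) \cong Pin(2) \times \mathbb{Z}_p$. Hence the finite dimensional approximation of Section \ref{sec:equivariant} is a $Pin(2) \times \mathbb{Z}_p$-equivariant map $f : S_{V,U} \to S_{V',U'}$ with $V,V'$ quaternionic $Pin(2)\times\mathbb{Z}_p$-representations, with $U'=U\oplus H^+(X)$ as $\mathbb{Z}_p$-modules (with $j$ acting as $-1$ and $S^1$ trivially on $U,U'$), and with $f$ restricting to the inclusion $S_U \hookrightarrow S_{U'}$ and carrying points at infinity to points at infinity.

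First I would take $\mathbb{Z}_p$-fixed points. Since the $Pin(2)$- and $\mathbb{Z}_p$-actions commute, restricting to $\mathbb{Z}_p$-fixed subspaces gives a $Pin(2)$-equivariant map
\[
f^{\mathbb{Z}_p} : S_{V^{\mathbb{Z}_p},\, U^{\mathbb{Z}_p}} \longrightarrow S_{V'^{\mathbb{Z}_p},\, U'^{\mathbb{Z}_p}}
\]
which still restricts to the inclusion $S_{U^{\mathbb{Z}_p}}\hookrightarrow S_{U'^{\mathbb{Z}_p}}$ and fixes points at infinity; that is, $f^{\mathbb{Z}_p}$ is of exactly the form to which the arguments of Section \ref{sec:spin} apply (compare the remark at the end of Section \ref{sec:equivariant}). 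Here $V^{\mathbb{Z}_p},V'^{\mathbb{Z}_p}$ are quaternionic $Pin(2)$-representations with $V^{\mathbb{Z}_p}-V'^{\mathbb{Z}_p}=D^{\mathbb{Z}_p}$, $U'^{\mathbb{Z}_p}=U^{\mathbb{Z}_p}\oplus H^+(X)^{\mathbb{Z}_p}$, and by definition $d_0 = \dim_{\mathbb{C}} D^{\mathbb{Z}_p} = \dim_{\mathbb{C}}V^{\mathbb{Z}_p}-\dim_{\mathbb{C}}V'^{\mathbb{Z}_p}$. Because $H^+(X)^{\mathbb{Z}_p}\neq 0$ by hypothesis, its complexification has positive complex dimension over the point, so $e^K(H^+(X)^{\mathbb{Z}_p}_{\mathbb{C}}) = \Lambda_{-1}\bigl((H^+(X)^{\mathbb{Z}_p}_{\mathbb{C}})^*\bigr)=0$ in $K^0(pt)=\mathbb{Z}$.

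Now I would apply Theorem \ref{thm:10on8plus1} with $B=\{pt\}$ to $f^{\mathbb{Z}_p}$ (its proof uses only the $Pin(2)$-equivariance together with the structural properties just listed): this yields
\[
\wedge^* H^+(X)^{\mathbb{Z}_p}_{\mathbb{C}} \otimes \wedge^* \psi^2(V'^{\mathbb{Z}_p}) = 2\eta\,\bigl(\wedge^* \psi^2(V^{\mathbb{Z}_p})\bigr)
\]
in $K^0(pt)/torsion=\mathbb{Z}$ for some $\eta\in\mathbb{Z}$. Taking total dimensions — using $\dim(\wedge^* W)=2^{\dim_{\mathbb{C}}W}$, that $\psi^2$ preserves rank, and $\dim_{\mathbb{C}}H^+(X)^{\mathbb{Z}_p}_{\mathbb{C}}=\dim_{\mathbb{R}}H^+(X)^{\mathbb{Z}_p}$ — gives $2^{\,\dim_{\mathbb{R}}H^+(X)^{\mathbb{Z}_p}+\dim_{\mathbb{C}}V'^{\mathbb{Z}_p}} = 2\eta\cdot 2^{\,\dim_{\mathbb{C}}V^{\mathbb{Z}_p}}$. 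Since the left-hand side is a positive power of $2$ we must have $\eta\geq 1$, hence $\dim_{\mathbb{R}}H^+(X)^{\mathbb{Z}_p}+\dim_{\mathbb{C}}V'^{\mathbb{Z}_p}\geq \dim_{\mathbb{C}}V^{\mathbb{Z}_p}+1$, i.e. $\dim_{\mathbb{R}}H^+(X)^{\mathbb{Z}_p}\geq d_0+1=\dim_{\mathbb{C}}(D^{\mathbb{Z}_p})+1$, as claimed. (When $d_0<0$ the inequality is anyway immediate from $H^+(X)^{\mathbb{Z}_p}\neq 0$.)

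The main obstacle I expect is not the computation but checking that $f^{\mathbb{Z}_p}$ genuinely inherits everything needed to run the argument of Theorem \ref{thm:10on8plus1}: that the $Pin^G(2)$-equivariant finite dimensional approximation can be arranged with $U'=U\oplus H^+(X)$ $\mathbb{Z}_p$-equivariantly, that taking $\mathbb{Z}_p$-fixed points commutes both with the restriction $f|_{S_U}$ and with passage to the fixed-point subspheres, and that the fixed-point data is correctly identified with $(H^+(X)^{\mathbb{Z}_p},\,D^{\mathbb{Z}_p})$. All of this follows from the $G$-equivariant construction recalled in Section \ref{sec:equivariant} together with the commutation of the $Pin(2)$- and $\mathbb{Z}_p$-actions, but it is the step that should be spelled out carefully.
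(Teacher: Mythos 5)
Your proposal is correct and is essentially the first of the two routes the paper gestures at: the paper's proof is just a citation to Fukumoto--Furuta \cite[Theorem 4]{ff} (or alternatively to Theorem \ref{thm:10on8Gplus1}), and your argument makes explicit the Fukumoto--Furuta route by restricting the $Pin^G(2)$-equivariant monopole map to the $\mathbb{Z}_p$-fixed set --- exactly the reduction described in the remark at the end of Section \ref{sec:equivariant} --- and then feeding the resulting $Pin(2)$-equivariant map over a point into the factor-of-two refinement of Theorem \ref{thm:10on8plus1}. The observations you flag as needing care are all fine: spin-liftability is automatic since $H^2(\mathbb{Z}_p;\mathbb{Z}_2)=0$ for odd $p$; the fixed-point data is $V^{\mathbb{Z}_p}-V'^{\mathbb{Z}_p}=D^{\mathbb{Z}_p}$, ${U'}^{\mathbb{Z}_p}-U^{\mathbb{Z}_p}=H^+(X)^{\mathbb{Z}_p}$ as in the paper's remark; and $e^K\bigl(H^+(X)^{\mathbb{Z}_p}_{\mathbb{C}}\bigr)=(1-1)^{\dim_{\mathbb{R}}H^+(X)^{\mathbb{Z}_p}}=0$ since the fixed subspace is nonzero, which triggers the improved bound. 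The paper's explicit ``alternative'' (Theorem \ref{thm:10on8Gplus1}) would instead keep everything in $R[\mathbb{Z}_p]$ and extract $d_0$ afterwards; your version is more elementary in that it reduces directly to the non-equivariant $Pin(2)$ statement already proved in detail. The dimension count at the end could be phrased a touch more cleanly --- from $2^{L}=2\eta\cdot 2^{a}$ with $\eta\in\mathbb{Z}$ one has $\eta=2^{L-a-1}$, and integrality forces $L\ge a+1$ --- but the conclusion you draw is the intended one.
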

\begin{proof}
This is a special case of \cite[Theorem 4]{ff} applied to the orbifold quotient $X/\mathbb{Z}_p$. Alternatively it can be deduced from an application of Theorem \ref{thm:10on8Gplus1}.
\end{proof}

In \cite{bar} we gave examples of $\mathbb{Z}_2$-actions on the intersection lattice of $4$-manifolds which could be realised by a continuous action but not smoothly. Using the results of this section we can show similar results for $\mathbb{Z}_p$-actions where $p$ is any odd prime.

\begin{theorem}
Let $p$ be an odd prime and let $b,g$ be integers with $g(p-1) \ge 3bp$ and $b \ge 1$. Let $X$ be the topological $4$-manifold $X = \# g(p-1) (S^2 \times S^2) \# 2bp(-E_8)$. Then $H^2(X ; \mathbb{Z})$ admits an isometry $f : H^2(X ; \mathbb{Z}) \to H^2(X ; \mathbb{Z})$ of order $p$ with the following properties:
\begin{itemize}
\item[(i)]{$f$ can be realised by the induced action of a continuous, locally linear $\mathbb{Z}_p$-action on $X$.}
\item[(ii)]{If $g(p-1) > 3bp$ then $f$ can be realised by the induced action of a diffeomorphism $X \to X$, where the smooth structure is obtained by viewing $X$ as $\# (g(p-1)-3bp)(S^2 \times S^2) \# pb(K3)$.}
\item[(iii)]{$f$ can not be induced by a smooth $\mathbb{Z}_p$-action for any smooth structure on $X$.}
\end{itemize}
\end{theorem}
\begin{proof}
We construct $X$ together with a continuous $\mathbb{Z}_p$-action. Let $X_0$ denote the $p$-fold cyclic cover of $S^4$ branched along a genus $g$ oriented surface $\Sigma \to S^4$ given its standard embedding (so that $\Sigma$ bounds a genus $g$ handlebody in $S^4$). Then by \cite[Corollary 4.3]{ak} $X_0$ is diffeomorphic to $\#g(p-1)(S^2 \times S^2)$. Since $X_0$ is a cyclic branched cover it admits a smooth $\mathbb{Z}_p$-action. Let $\varphi_0 : X_0 \to X_0$ be a generator of this action and let $\pi : X_0 \to S^4$ denote the covering map. The induced map $\varphi_0 : H^2(X_0 ; \mathbb{Z}) \to H^2(X_0 ; \mathbb{Z})$ satisfies $1 + \varphi_0 + \varphi^2_0 + \cdots + \varphi_0^{p-1} = \pi^* \pi_*$, where $\pi_* : H^2(X_0 ; \mathbb{Z}) \to H^2(S^4 ; \mathbb{Z})$ is the pushforward map in cohomology. Then since $H^2(S^4 ; \mathbb{Z}) = 0$ it follows that $1 + \varphi_0 + \cdots + \varphi_0^{p-1} = 0$. It also follows that $H^2( X_0 ; \mathbb{R})^{\mathbb{Z}_p} = 0$ for if $v \in H^2(X_0 ; \mathbb{R})$ is fixed by $\varphi_0$, then $pv = (1+\varphi_0 + \cdots + \varphi_0^{p-1})v = 0$, so $v = 0$. In particular we deduce that $H^+(X_0)^{\mathbb{Z}_p} = 0$.

Next, choose a point $x \in X_0$ which is not fixed by $\varphi_0$. Let $X$ be obtained from $X_0$ by attaching $\# 2b(-E_8)$ to each point in the $\mathbb{Z}_p$-orbit of $x$. Then $X = \# g(p-1) (S^2 \times S^2) \# 2bp(-E_8)$. The $\mathbb{Z}_p$-action on $X_0$ generated by $\varphi_0$ determines a corresponding $\mathbb{Z}_p$-action on $X$. Let $\varphi : X \to X$ denote the corresponding generator of this action and let $f = \varphi^* : H^2(X ; \mathbb{Z}) \to H^2(X ; \mathbb{Z})$ be the induced isometry of $H^2(X ; \mathbb{Z})$. Then $f$ is realised by the continuous $\mathbb{Z}_p$-action generated by $\varphi$. By construction $\varphi$ is locally linear, proving (i). 

The connected sum decomposition of $X$ gives an identification $H^2(X ; \mathbb{Z}) = g(p-1) H \oplus 2bp (-E_8)$, where $H$ denotes the intersection form of $S^2 \times S^2$. Since $-E_8$ is negative definite, we find that $H^+(X)^{\mathbb{Z}_p} \cong H^+(X_0)^{\mathbb{Z}_p} = 0$. Taking $\mathfrak{s}$ to be the unique spin structure on $X$, Theorem \ref{thm:zp} implies that there does not exist a smooth $\mathbb{Z}_p$-action realising $f$ for any smooth structure on $X$, proving (iii). Lastly if $g(p-1) \ge 3bp$ then $X$ admits at least one smooth structure since we can write $X$ as $\# (g(p-1)-3bp)(S^2 \times S^2) \# pb(K3)$. Moreover if $g(p-1) > 3bp$ then by \cite[Theorem 2]{wall}, every isometry of $H^2(X ; \mathbb{Z})$ is realised by a diffeomorphism. In particular $f$ is realised by some diffeomorphism of $X$, which proves (ii). 
\end{proof}

\section{Application to non-smoothable families}\label{sec:nonsmooth}

In this Section, we consider an application of our main obstruction results to the existence of non-smoothable families of $4$-manifolds. For a smooth $4$-manifold $X$ we let $Homeo(X)$ denote the group of orientation preserving homeomorphisms of $X$ with the $\mathcal{C}^0$-topology and $Diff(X)$ the group of orientation preserving diffeomorphisms of $X$ with the $\mathcal{C}^\infty$-topology. The natural inclusion $Diff(X) \to Homeo(X)$ is continuous, but not a closed embedding.

\begin{definition}\label{def:smoothable}
Let $B$ be a compact smooth manifold. 
\begin{itemize}
\item{By a {\em continuous family of $4$-manifolds over $B$ with fibres homeomorphic to $X$}, we mean a topological fibre bundle $\pi : E \to B$ with fibres homeomorphic to $X$ with transition functions in $Homeo(X)$.}
\item{We say that $\pi : E \to B$ is {\em smoothable with fibres diffeomorphic to $X$} if there exists a reduction of structure group of $E$ to $Diff(X)$.}
\end{itemize}
\end{definition}

To be more explicit, a continuous family $\pi : E \to B$ with fibres homeomorphic to $X$ is smoothable with fibres diffeomorphic to $X$ if $E$ can be constructed from an open cover $\{ U_i \}$ of $B$ and with transition functions given by {\em continuous} maps $g_{ij} : U_{ij} \to Diff(X)$. The underlying topological fibre bundle is then given by composing the transition functions $g_{ij}$ with the inclusion $Diff(X) \to Homeo(X)$.\\

As explained in \cite[\textsection 4.2]{bako}, it follows from a result of M\"uller-Wockel \cite{muwo} that $E$ is smoothable with fibres diffeomorphic to $X$ if and only if $E$ admits the structure of a smooth manifold such that $\pi : E \to B$ is a submersion and the fibres of $E$ with their induced smooth structure are diffeomorphic to $X$ (this is the notion of a smooth family that we have been using throughout the paper).

\begin{remark}
All the $4$-manifolds we consider in this section will be oriented and have non-zero signature. In this case every homeomorphism is automatically orientation preserving.
\end{remark}

\begin{theorem}\label{thm:nonsmooth1}
Let $X$ be a compact, smooth, simply-connected $4$-manifold with $| \sigma(X) | > 8$ and indefinite intersection form. Then:
\begin{itemize}
\item{If $X$ is non-spin, there exists a topological fibre bundle $E \to B$ with fibres homeomorphic to $X$ and $B$ is a torus of dimension $\min \{b_+(X),b_-(X)\}$ such that $E$ is non-smoothable.}
\item{If $X$ is spin, there exists a topological fibre bundle $E \to B$ with fibres homeomorphic to $X$ and $B$ is a torus of dimension $\min \{b_+(X),b_-(X))\}-2$ such that $E$ is non-smoothable.}
\end{itemize}
\end{theorem}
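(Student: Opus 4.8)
The plan is to produce $E$ as a \emph{flat} bundle $E=\mathbb R^{n}\times_{\mathbb Z^{n}}X$ classified by a homomorphism $\rho\colon\mathbb Z^{n}\to\mathrm{Homeo}(X)$, with $n=\min\{b_+(X),b_-(X)\}$ in the non-spin case and $n=\min\{b_+(X),b_-(X)\}-2$ in the spin case. After reversing orientation if necessary we may assume $\sigma(X)<0$, so $\min\{b_+(X),b_-(X)\}=b_+(X)=:b$. We choose $\rho$ so that: (a) the induced action on $H^2(X;\mathbb Z)$ fixes a characteristic vector $c$ with $d:=(c^2-\sigma(X))/8\ge 1$, so that if $E$ is smoothable it is a spin$^c$-family with fibre $(X,\mathfrak s)$, $c_1(\mathfrak s)=c$ (in the spin case $c=0$ and $d=-\sigma(X)/8\ge2$ automatically); and (b) the associated flat bundle $H^+(X)\to B$ is ``fully twisted'', in the sense that $H^+(X)\cong L_1\oplus\cdots\oplus L_{n}\oplus\underline{\mathbb R}^{\,b-n}$ where $L_i$ is the real line bundle on $T^{n}$ with $w_1(L_i)=x_i$, the $i$-th standard generator of $H^1(T^{n};\mathbb Z_2)$. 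Granting (a) and (b), the proof finishes at once: if $E$ were smoothable it would carry a finite-dimensional approximation of the families monopole map as in Section~\ref{sec:setup}, and then Theorem~\ref{thm:euler} (non-spin) resp.\ Corollary~\ref{cor:eulerpin} (spin) would force $d\le 0$, a contradiction. In the non-spin case one uses in addition that $w_{b}(H^+(X))=x_1\cdots x_b\ne 0$ forces the twisted Euler class $e(H^+(X))\in H^{b}(T^{b};\mathbb Z_w)$, $w=w_1(H^+(X))=x_1+\cdots+x_b$, to be non-zero, since in top degree the mod-$2$ reduction $H^{b}(T^{b};\mathbb Z_w)\to H^{b}(T^{b};\mathbb Z_2)$ is an isomorphism.

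The spin case is the easy one. Here $Q_X\cong q\,H\oplus 2p(-E_8)$ with $16p=-\sigma(X)>8$, so $p\ge1$, and Furuta's $10/8$ theorem applied to the smooth $X$ gives $b=q\ge 2p+1\ge3$, hence $n=b-2\ge1$. I would realise $X$ topologically as $\#^{q}(S^2\times S^2)\,\#\,Z$, where $Z$ is the closed simply-connected topological spin $4$-manifold with intersection form $2p(-E_8)$ and vanishing Kirby--Siebenmann invariant. On the $i$-th $S^2\times S^2$ summand take the orientation-preserving involution $(x,y)\mapsto(\bar y,\bar x)$, extended by the identity over the connected-sum region; these involutions have pairwise disjoint support, hence commute, and the $i$-th one acts on $H^2$ as the reflection in $a_i+b_i$ (the standard generators of the $i$-th hyperbolic summand). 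Let $\rho$ send the generators of $\mathbb Z^{b-2}$ to the first $b-2$ of these involutions. Then $X$ has a unique, monodromy-invariant spin structure so $E$ is a spin family, and $H^+(X)\cong L_1\oplus\cdots\oplus L_{b-2}\oplus\underline{\mathbb R}^{\,2}$, so $w_{b-2}(H^+(X))\ne0$ with $b-2\in\{b,b-1,b-2\}$; since $d=-\sigma(X)/8=2p\ge2$, Corollary~\ref{cor:eulerpin} yields the contradiction.

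The non-spin case carries the real content. Now $Q_X\cong\langle 1\rangle^{b}\oplus\langle-1\rangle^{b_-}$ is odd, and its characteristic vectors have all coordinates odd, so they can never be orthogonal to the ``obvious'' reflection vectors; the naive swap construction above only produces families with $d=0$. Instead I would choose a characteristic vector $c$ with $c^2=\sigma(X)+8$ — note $c^2\le-1<0$ since $-\sigma(X)\ge9$ — together with mutually orthogonal vectors $v_1,\dots,v_{b}\in c^\perp$, each of square $1$ or $2$, spanning a positive-definite sublattice of rank $b$; the reflections $r_{v_i}$ are then commuting integral isometries of $Q_X$ fixing $c$ and generating $(\mathbb Z/2)^{b}$. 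Realising this action by a locally linear topological $(\mathbb Z/2)^{b}$-action on $X$ (Freedman's realisation of isometries by homeomorphisms, arranged to commute since the $r_{v_i}$ already commute in $\pi_0\mathrm{Homeo}(X)$), taking the associated flat bundle over $T^{b}$, and choosing the fibrewise metric so that $H^+(X)\otimes\mathbb R$ is the span of $v_1,\dots,v_{b}$, gives $H^+(X)\cong L_1\oplus\cdots\oplus L_{b}$ with $w_{b}(H^+(X))\ne0$; Theorem~\ref{thm:euler} then gives the contradiction if $E$ is smoothable.

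The step I expect to be the main obstacle is entirely in the non-spin case and is twofold: (i) the lattice bookkeeping producing $c$ and the orthogonal system $v_1,\dots,v_{b}$ — this is where $|\sigma(X)|>8$ really enters, since it provides the room in the negative part of $Q_X$ needed to make $c$ orthogonal to a full positive reflection system; it is straightforward for $b=1$ (e.g.\ $c=3e_1+g_1+\cdots+g_{b_-}$ and a single $v_1=2k\,e_1+\sum_j m_jg_j$ with $\sum_j m_j=6k$ and $\sum_j m_j^2=4k^2-2$, solvable once $b_-\ge10$) and should be handled by similar explicit choices in general; and (ii) promoting the commuting isometries $r_{v_i}$ to genuinely commuting homeomorphisms of $X$, using Freedman--Quinn theory. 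Everything else — the identification of $H^+(X)$ as a sum of line bundles, the non-vanishing of the relevant characteristic class, and the appeal to the obstruction theorems — is formal.
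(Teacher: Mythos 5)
Your spin case tracks the paper's argument closely: realize $X$ topologically as a connected sum with $b_+(X)$ copies of $S^2\times S^2$, pick orientation-preserving diffeomorphisms of the individual $S^2\times S^2$ summands acting as $-1$ on $H^+$, arrange disjoint supports so they honestly commute, form the flat bundle over $T^{b_+(X)-2}$, and apply Corollary~\ref{cor:eulerpin}. (The paper does not bother insisting the $f_j$ be involutions or specify the swap map, but this is cosmetic.)

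The non-spin case is where you diverge, and that is where your argument has a real gap. You propose to work directly with the abstract lattice $\langle 1\rangle^{b_+}\oplus\langle -1\rangle^{b_-}$, find a characteristic vector $c$ with $c^2=\sigma(X)+8$ together with pairwise-orthogonal reflection vectors $v_1,\dots,v_b\in c^\perp$, and then invoke Freedman to realize the reflections $r_{v_i}$ by homeomorphisms of $X$ ``arranged to commute since the $r_{v_i}$ already commute in $\pi_0\,\mathrm{Homeo}(X)$.'' That last step does not work as stated: commuting in $\pi_0$ is far weaker than commuting in $\mathrm{Homeo}(X)$, and what you need for the flat bundle $\mathbb{R}^b\times_{\mathbb{Z}^b}X$ is an honest homomorphism $\mathbb{Z}^b\to\mathrm{Homeo}(X)$, i.e.\ genuinely commuting homeomorphisms. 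Freedman--Quinn theory gives you a single homeomorphism per isometry; turning a finite abelian group of isometries into a locally linear topological action is a separate, nontrivial problem (realizing group actions, not just mapping classes), and your proposal does not address it. You also only carry out the lattice bookkeeping for $b=1$ and assert the general case ``should be handled by similar explicit choices.''

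The paper's non-spin construction sidesteps both issues by a change of topological decomposition rather than a lattice computation. It writes $X$ homeomorphically as $\#\,a(S^2\times S^2)\,\#\,b\,\overline{\mathbb{CP}^2}\,\#\,(-E_8)\,\#\,\overline{\mathbb{CP}^2}_{fake}$ with $a=b_+(X)$ and $\sigma(X)=-(b+9)$. The commuting homeomorphisms are taken (as in the spin case) to act only on the $a$ copies of $S^2\times S^2$ with disjoint supports and trivially elsewhere, so they literally commute; the flat bundle is the mapping torus over $T^a$. The topological spin$^c$ structure is chosen with characteristic supported on the $\langle -1\rangle$ summands (zero on the hyperbolic and $-E_8$ parts), giving $c^2=-(b+1)$ and hence $d=1$, and it is manifestly monodromy-invariant because the monodromy is trivial on those summands. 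Then $H^+(X)$ is a sum of $a$ nontrivial real line bundles over $T^a$, so $w_{b_+(X)}(H^+(X))\neq 0$ and Theorem~\ref{thm:euler} applies. This is strictly more elementary than your lattice-plus-Freedman route and does all the work your ``main obstacle'' paragraph flags as open. If you want to keep your route, you would need to prove (i) the general-$b$ lattice statement and (ii) a realization theorem for abelian groups of isometries by genuinely commuting locally linear homeomorphisms; neither is in the paper and neither is standard off the shelf.
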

\begin{remark}
This theorem is a generalisation of \cite[Corollary 1.5]{kkn}.
\end{remark}
\begin{proof}
We consider the spin case first. We may as well assume $\sigma(X) < 0$ and then $X$ is homeomorphic to $\# a(S^2 \times S^2) \# 2b(-E_8)$ for some $a \ge 0$, $b > 0$. Note that $a = b_+(X) \ge 3$ by the $10/8$ inequality. Let $(S^2 \times S^2)_1 , \dots , (S^2 \times S^2)_a$ denote the $a$ summands of $\# a(S^2 \times S^2)$. Let $f_j : (S^2 \times S^2)_j \to (S^2 \times S^2)_j$ be an orientation preserving diffeomorphism which acts as $-1$ on $H^+( (S^2 \times S^2)_j )$. Applying an isotopy to $f_j$ if necessary, we can assume that there exists an open neighbourhood $N_j$ of a point of $(S^2 \times S^2)_j$ on which $f_j$ acts as the identity. For $1 \le j < a$,  we attach $(S^2 \times S^2)_j$ to $(S^2 \times S^2)_{j+1}$ by removing open balls from $N_j, N_{j+1}$ and identifying their boundaries. In this way $f_1, \dots , f_a$ act as commuting diffeomorphisms on $\# a (S^2 \times S^2)$. In a similar manner we attach $2b$ copies of $(-E_8)$ to $\# a(S^2 \times S^2)$ via handles that end in neighbourhoods where the $f_j$ act trivially. In this way we have constructed a continuous orientation preserving $\mathbb{Z}^a$-action on $X$. Forgetting the action of $f_{a-1}, f_a$, we get a continuous $\mathbb{Z}^{a-2}$-action. Let $B = T^{a-2}$ be the torus of dimension $a-2 = b_+(X)-2$ and let $\pi : E \to B$ be the mapping torus of the $\mathbb{Z}^{a-2}$-action. Arguing as in \cite[Lemma 2.8]{kkn}, one sees that $E$ admits a topological spin structure (see \cite{nak} for an explanation of topological spin structures).\\

Now suppose $\pi : E \to B$ is smoothable. In this case, the topological spin structure determines a spin structure on $E$ in the usual sense and it follows that the vertical tangent bundle of $E$ admits a spin structure which restricts to the unique spin structure on $X$. It is straightforward to see that $w_{b_+(X)-2}(H^+(X)) \neq 0$, which contradicts Corollary \ref{cor:eulerpin}. Therefore $E$ is not smoothable.\\

Now we consider the non-spin case. We may as well assume $\sigma(X) < -8$ and then $X$ is homeomorphic to $\# a(\mathbb{CP}^2) \# (a+b+9)(\overline{\mathbb{CP}^2})$ for some $a,b \ge 0$. Note that $a = b_+(X) > 0$, since $X$ is indefinite. We let $\overline{\mathbb{CP}^2}_{fake}$ denote the compact simply-connected topological $4$-manifold whose intersection form is $(-1)$ and whose Kirby-Siebenmann class is non-zero. Then $X$ is homeomorphic to $\# a(S^2 \times S^2) \# b \overline{\mathbb{CP}^2} \# (-E_8) \# \overline{\mathbb{CP}^2}_{fake}$. As in the spin case, we construct commuting homeomorphisms $f_1, \dots , f_a$ acting on the $a$ copies of $(S^2 \times S^2)$ and trivially on the remaining factors. In this way we obtain a continuous orientation preserving action of $\mathbb{Z}^{a}$ on $X$. Let $B = T^a$ be the torus of dimension $a = b_+(X)$ and let $\pi : E \to B$ be the mapping torus of the $\mathbb{Z}^a$-action. Using an argument similar to the spin case, we have that $E$ admits a topological spin$^c$-structure. Moreover this topological spin$^c$-structure can be chosen so that its restriction to any fibre has characteristic $c \in H^2( X ; \mathbb{Z})$ which is zero on the $a(S^2 \times S^2)$ and $(-E_8)$ factors and satisfies $c^2 = -(b+1)$ (each copy of $\overline{\mathbb{CP}^2}$ and the copy of $\overline{\mathbb{CP}^2}_{fake}$ each contributing $-1$).\\

Suppose $E$ is smoothable. Then similar to the spin case, the topological spin$^c$-structure gives a spin$^c$-structure $\mathfrak{s}$ on the vertical tangent bundle in the usual sense. We have $c_1(\mathfrak{s})^2 = c^2 = -(b+1) > \sigma(X) = -(b+9)$. But also it is straightforward to see that $w_{b_+(X)}(H^+(X)) \neq 0$, which contradicts Theorem \ref{thm:euler}. Hence $E$ is not smoothable.
\end{proof}

\begin{corollary}
Let $X$ be a compact, smooth, simply-connected $4$-manifold with $| \sigma(X) | > 8$ and indefinite intersection form. Then the inclusion $Diff(X) \to Homeo(X)$ is not a weak homotopy equivalence. More precisely:
\begin{itemize}
\item{If $X$ is non-spin then $\pi_j( Diff(X)) \to \pi_j( Homeo(X))$ is not an isomorphism for some $j \le \min\{b_+(X),b_-(X)\}-1$.}
\item{If $X$ is spin then $\pi_j( Diff(X)) \to \pi_j( Homeo(X))$ is not an isomorphism for some $j \le \min\{b_+(X),b_-(X)\}-3$.}
\end{itemize}
\end{corollary}
\begin{proof}
The homomorphism $Diff(X) \to Homeo(X)$ induces a continous map \linebreak $BDiff(X) \to BHomeo(X)$. A continuous family $\pi : E \to B$ with fibres homeomorphic to $X$ is equivalent to giving a homotopy class of map $f : B \to BHomeo(X)$. Then $E$ is smoothable if and only if there exists a lift of $f$ to $BDiff(X)$:
\[
\xymatrix{
& BDiff(X) \ar[d] \\
B \ar[r]_-f \ar@{-->}[ur]& BHomeo(X)
}
\]
Let $b = \min\{ b_+(X) , b_-(X)\}$ if $X$ is non-spin and $b = \min\{ b_+(X) , b_-(X)\} - 2$ is $X$ is spin. Let $B = T^b$ be a torus of dimension $b$. Then by Theorem \ref{thm:nonsmooth1}, there exists a map $f : T^b \to BHomeo(X)$ which does not lift to $BDiff(X)$. By standard obstruction theory, it follows that $\pi_{j}( BDiff(X)) \to \pi_{j}(BHomeo(X))$ is not an isomorphism for some $j$ in the range $1 \le j \le b$. But since $\pi_j( Diff(X)) \cong \pi_{j+1}(BDiff(X))$ and $\pi_j( Homeo(X)) \cong \pi_{j+1}(BHomeo(X))$, we get that \linebreak $\pi_j( Diff(X)) \to \pi_j( Homeo(X))$ is not an isomorphism for some $j \le b-1$. In particular, $Diff(X) \to Homeo(X)$ is not a weak homotopy equivalence.
\end{proof}

\begin{remark}
It is interesting to compare this result with manifolds of lower dimension. As observed in \cite{kkn}, if $M$ is a compact oriented smooth manifold of dimension $\le 3$, then $Diff(M) \to Homeo(M)$ is known to be a weak homotopy equivalence.
\end{remark}


\bibliographystyle{amsplain}

\end{document}